\newcommand{\subsectionruninhead}{\@startsection{subsection}{2}{0mm}
{-\baselineskip}{-0mm}{\bf\large}}
\newcommand{\subsubsectionruninhead}{\@startsection{subsubsection}{3}{0mm}
{-\baselineskip}{-0mm}{\bf\normalsize}}
\newtheorem*{theorem*}{Theorem}
\newtheorem{theoremalph}{Theorem}
\newtheorem*{proposition*}{Proposition}
\newtheorem*{corollary*}{Corollary}
\newtheorem*{conjecture*}{Conjecture}
\newtheorem*{claim*}{Claim}
\newtheorem*{remark*}{Remark}
\newtheorem{theorem}{Theorem}[section]
\newtheorem{proposition}[theorem]{Proposition}
\newtheorem{lemma}[theorem]{Lemma}
\theoremstyle{definition}
\newtheorem{definition}[theorem]{Definition}
\newtheorem{remark}[theorem]{Remark}
\numberwithin{equation}{section}
 \def\RR{{\mathbb R}}
\newcommand{\supp}{\operatorname{supp}}
\newcommand{\orb}{\operatorname{Orb}}
\newcommand{\ind}{\operatorname{Ind}}
\newcommand{\sing}{\operatorname{Sing}}
\newcommand{\topo}{\operatorname{top}}
\begin{document}

\title{Measures of intermediate entropies for star vector fields}

\author{Ming Li\footnote{\scriptsize M. Li is supported by NSFC 11571188 and the Fundamental Research Funds for the Central Universities.}, 
	    Yi Shi\footnote{\scriptsize Y. Shi is supported by NSFC 11701015 and Young Elite Scientists Sponsorship Program by CAST.}, 
	    Shirou Wang\footnote{\scriptsize S. Wang is supported by NSFC 11771026 and 11471344, PIMS PTCS and a PIMS CRG
	    	grant.}
	    and Xiaodong Wang\footnote{\scriptsize X. Wang is supported by NSFC 11701366 and Shanghai Sailing Program 17YF1409300.}
    }

\maketitle

\begin{abstract}
We prove that all star vector fields, including Lorenz attractors and multisingular hyperbolic vector fields, admit the intermediate entropy property. To be precise, if $X$ is a star vector field with $h_{\topo}(X)>0$, then for any $h\in [0,h_{\topo}(X))$, there exists an ergodic invariant measure $\mu$ of $X$ such that $h_{\mu}(X)=h$. Moreover, we show that the topological entropy is lower semi-continuous for star vector fields.
\end{abstract}

\section{Introduction}

The concept of entropy was introduced by Kolmogorov in 1958, which has been the most important invariant in ergodic theory and dynamical systems during the past 60 years. It reflects the complexity of the dynamical system. In some circumstances, the positivity of entropy forces the system to have typical structure. For instance, Katok proved the following milestone theorem.

\begin{theorem*}[\cite{Katok2}]\label{Thm:Katok}
	Let $f$ be a $C^r(r>1)$ surface diffeomorphism, and $\mu$ be an ergodic measure of $f$ with $h_{\mu}(f)>0$. Then for any $\varepsilon>0$, there exits a hyperbolic horseshoe $\Lambda_\varepsilon$ satisfying $h_{\topo}(f,\Lambda_\varepsilon)>h_\mu(f)-\varepsilon$.
\end{theorem*}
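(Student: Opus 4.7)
The plan is to follow Katok's original Pesin-theoretic strategy: use the positive-entropy assumption to deduce that $\mu$ is hyperbolic, restrict to a large ``uniformly hyperbolic'' subset (a Pesin block), produce many periodic orbits in that block via a closing lemma, and then organize these periodic orbits into a horseshoe using the inclination ($\lambda$-) lemma.

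First I would establish that $\mu$ is hyperbolic. By Ruelle's inequality on surfaces, $h_\mu(f)\leq\max\{0,\lambda^+(\mu)\}$, where $\lambda^+(\mu)$ is the larger Lyapunov exponent, so $h_\mu(f)>0$ forces $\lambda^+(\mu)>0$; applying the same estimate to $f^{-1}$ (which has the same entropy and opposite exponents) yields $\lambda^-(\mu)<0$. Since $f$ is $C^r$ with $r>1$, Pesin's stable/unstable manifold theorem then supplies, for each $\ell\in\NN$, a compact Pesin set $\Lambda_\ell$ on which the local stable and unstable manifolds have size uniformly bounded below, the hyperbolic splitting is uniformly transverse, and the contraction/expansion rates are uniform; moreover $\mu(\Lambda_\ell)\to 1$ as $\ell\to\infty$.

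Next I would approximate $h_\mu(f)$ from below inside a Pesin block. Fix $\delta\in(0,\varepsilon/3)$ and choose $\ell$ with $\mu(\Lambda_\ell)>1-\delta$. By the Brin--Katok local entropy formula combined with Birkhoff's ergodic theorem, for every sufficiently large $n$ there is an $(n,\rho)$-separated set $E_n\subset\Lambda_\ell$ with $|E_n|\geq e^{n(h_\mu(f)-\delta)}$, where $\rho>0$ depends only on $\ell$, and whose points return to $\Lambda_\ell$ with frequency close to $\mu(\Lambda_\ell)$. Pigeonholing through a fixed finite partition of $\Lambda_\ell$ by small cells, I may assume in addition that both $x$ and $f^n(x)$ lie in a common cell for all $x\in E_n$, losing only a subexponential factor. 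Katok's closing lemma for Pesin blocks then associates to each such $x$ a hyperbolic periodic point $p_x$ of period close to $n$ whose local stable and unstable manifolds have size uniformly bounded below, and the $\lambda$-lemma on the surface forces transverse heteroclinic intersections between the invariant manifolds of any two such periodic orbits. Coding these intersections in the standard Smale-horseshoe fashion yields a uniformly hyperbolic compact invariant set $\Lambda_\varepsilon$ with $h_{\topo}(f,\Lambda_\varepsilon)\geq \tfrac{1}{n}\log|E_n|-o(1)\geq h_\mu(f)-\varepsilon$.

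The main obstacle is bridging the gap between the non-uniform estimates furnished by Pesin theory and the uniform geometric data demanded by the classical horseshoe construction: large local invariant manifolds at distinct periodic points, uniform transversality between their stable and unstable directions, and distortion control for the shadowing. Guaranteeing all of these simultaneously forces repeated passage to smaller ``good'' subsets of $E_n$, and the technical heart of the argument is showing that each such restriction costs at most a subexponential factor, so that the final count still produces entropy at least $h_\mu(f)-\varepsilon$. This step relies essentially on the $C^{1+\alpha}$ hypothesis, without which Pesin's manifolds may fail to exist and the whole construction breaks down.
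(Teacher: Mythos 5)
Note that the paper does not reprove this theorem: it is cited directly from Katok's 1980 paper, and the paper's own contribution is the flow analogue, Proposition~\ref{prop:LSC of entropy}, whose proof adapts exactly this circle of ideas to star vector fields. Your outline reproduces the correct skeleton of Katok's original argument: hyperbolicity of $\mu$ from Ruelle's inequality applied to $f$ and $f^{-1}$, passage to a Pesin block $\Lambda_\ell$ with uniform-size local invariant manifolds, an $(n,\rho)$-separated set of cardinality $\geq e^{n(h_\mu-\delta)}$ whose points start and nearly return inside one small cell, and a horseshoe extracted from this data. That is the same framework the paper invokes and adapts.

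One point worth tightening. You write that Katok's closing lemma gives periodic points $p_x$ and then ``the $\lambda$-lemma forces transverse heteroclinic intersections,'' from which you code a horseshoe of entropy $\tfrac1n\log|E_n|-o(1)$. If the heteroclinic crossings were obtained only after iterating via the inclination lemma, the return time used to build the horseshoe could be much larger than $n$, and the entropy estimate would degrade. What actually makes the count work is that all the points (or periodic shadows) lie in a single cell of diameter much smaller than the uniform lower bound on the local stable and unstable manifolds in $\Lambda_\ell$, so the \emph{local} manifolds already cross transversally inside the cell, with no extra iteration; the $n$-th return map then sends each of the $|E_n|$ disjoint strips across all the others, yielding entropy $\geq\tfrac1n\log|E_n|-o(1)$. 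You do state the uniform lower bound on manifold size, so the ingredient is present, but the logic should rest on the in-cell crossing rather than on the $\lambda$-lemma. This is also where the paper's flow version differs in detail: rather than going through periodic orbits at all, it constructs the horseshoe directly from the Poincar\'e return map on a flow-box cross-section $N_z(\delta)$, with Liao's theorem replacing Pesin's stable-manifold theorem to supply local invariant manifolds of size proportional to $|X(z)|$, since near singularities only flow-speed-scaled estimates survive. Your periodic-orbit route and the paper's return-map route are both faithful to Katok; the latter is slightly leaner and transfers more cleanly to the singular-flow setting.
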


Similar result holds for higher dimensional diffeomorphisms having a hyperbolic measure $\mu$ with positive measure entropy, see \cite{KM}.
Combined with the variational principle, this theorem implies the lower semi-continuity of the topological entropy for $C^r(r>1)$ surface diffeomorphisms. Moreover, a hyperbolic horseshoe is conjugated to a full shift. Thus every $C^r(r>1)$ surface diffeomorphism $f$ has the {\it intermediate entropy property}, i.e. for any constant $h\in[0,h_{\topo}(f))$, there exists an ergodic measure $\mu$ of $f$ satisfying $h_{\mu}(f)=h$.

Katok raised the following conjecture.
\begin{conjecture*}\label{Que:Katok}
	Every $C^r(r\geq1)$ diffeomorphism $f$ on a manifold satisfies the intermediate entropy property.
\end{conjecture*}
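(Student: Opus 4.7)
The plan is to attack the conjecture by constructing, for every target entropy $h\in[0,h_{\topo}(f))$, a compact invariant subset carrying an ergodic measure of entropy exactly $h$. The natural candidate for this subset is a hyperbolic horseshoe, since horseshoes are conjugate to topologically transitive subshifts of finite type and therefore support a rich supply of ergodic (indeed Bernoulli) invariant measures of every entropy in the interval $[0,h_{\topo}(f|_{\Lambda})]$. Thus the whole problem reduces to producing, on demand, horseshoes of arbitrarily large entropy strictly less than $h_{\topo}(f)$.

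First I would try to produce, for every $\varepsilon>0$, a hyperbolic horseshoe $\Lambda_{\varepsilon}$ with $h_{\topo}(f|_{\Lambda_{\varepsilon}})>h_{\topo}(f)-\varepsilon$. In the $C^{r}$ ($r>1$) setting this splits into two steps. By the variational principle, pick an ergodic measure $\mu_{\varepsilon}$ with $h_{\mu_{\varepsilon}}(f)>h_{\topo}(f)-\varepsilon/2$; under mild hypotheses (e.g.\ Ruelle's inequality combined with a lower bound on entropy vs.\ the sum of positive Lyapunov exponents) this measure is automatically hyperbolic. Katok's horseshoe theorem quoted in the excerpt then converts $\mu_{\varepsilon}$ into a horseshoe $\Lambda_{\varepsilon}$ of topological entropy at least $h_{\mu_{\varepsilon}}(f)-\varepsilon/2>h_{\topo}(f)-\varepsilon$. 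Given any $h\in[0,h_{\topo}(f))$, I then fix $\varepsilon<h_{\topo}(f)-h$, take the horseshoe $\Lambda_{\varepsilon}$, and pull back an ergodic Bernoulli measure of Kolmogorov--Sinai entropy $h$ from the symbolic conjugate; since conjugacy preserves both ergodicity and measure-theoretic entropy, this yields the required $\mu$.

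The main obstacle is the $C^{1}$ case $r=1$, where Pesin theory, Lyapunov charts, and hence Katok's horseshoe theorem are all unavailable: a hyperbolic ergodic measure no longer automatically generates a horseshoe of nearby topological entropy. Even in the $C^{r}$ ($r>1$) regime the automatic hyperbolicity of high-entropy ergodic measures is delicate beyond surfaces, since zero Lyapunov exponents in a center direction can persist at every entropy level and obstruct the passage to a uniformly hyperbolic invariant set. A secondary difficulty is that the Bernoulli/Markov measures on the symbolic model give a continuous range of entropies only after one verifies that the topological entropy of $f|_{\Lambda_{\varepsilon}}$ varies continuously (or at least surjectively covers the required interval) as $\varepsilon\to 0$; this in turn relies on having a whole family of horseshoes whose entropies approximate $h_{\topo}(f)$ from below, which is precisely the hardest link in the chain. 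For the restricted class of star vector fields treated in this paper, the singular/multisingular hyperbolic splitting plays the role of the missing Pesin machinery and supplies the required horseshoes, which is presumably how the authors' proof will unfold.
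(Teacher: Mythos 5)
The statement you were asked about is labelled a \emph{conjecture} in the paper, attributed to Katok, and the paper does \emph{not} prove it. It is presented as an open problem, and the authors only verify it for the special class of star vector fields (their Theorem~A), not for general $C^r$ diffeomorphisms. So there is no ``paper's own proof'' to compare against: any complete proof you gave would be a major new result, and any incomplete one is just a restatement of why the problem is hard.

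Your proposal is, in effect, an honest description of why the conjecture is open rather than a proof of it. You correctly observe that the horseshoe strategy works for $C^r$ ($r>1$) \emph{surface} diffeomorphisms via Katok's theorem plus automatic hyperbolicity of positive-entropy measures, and you correctly flag the two genuine gaps: (i) in the $C^1$ category Pesin theory and Katok's horseshoe theorem are unavailable, so hyperbolic measures need not generate horseshoes of comparable entropy; and (ii) in higher dimensions, even for $r>1$, high-entropy ergodic measures need not be hyperbolic (zero center exponents can persist), so there is no supply of horseshoes to begin with. These are precisely the obstructions that keep the conjecture open, and neither is filled by your argument. Your closing remark is also accurate as a description of the paper: for star vector fields the star condition forces every ergodic measure to be hyperbolic (with dominated splitting for the scaled linear Poincar\'e flow), and Liao's flow-speed-scaled estimates substitute for the missing Pesin machinery, which is how the authors get horseshoes in the $C^1$ setting for that restricted class. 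But this establishes Theorem~A of the paper, not the conjecture, and you should not present the above as a proof of the conjecture itself.
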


Not all systems admit the intermediate entropy property. There are uniquely ergodic homeomorphisms \cite{HK} with positive topological entropy, see also \cite{bcr}. It seems that the system is required to have some smoothness to achieve this property. For diffeomorphisms, this conjecture is widely open. Herman \cite{H} constructed a $C^{\infty}$ minimal diffeomorphism with positive topological entropy. However, Herman's example is not uniquely ergodic and has intermediate entropy property, so it is not a counterexample to Katok's conjecture. For certain skew products, Sun~\cite{sun-skew} verifies this conjecture to be true. One can refer to~\cite{sun-skew2,sun-toral,guan-sun-wu} for related results.

In this paper, we verify Katok's conjecture for star vector fields. Let $M$ be a $d$-dimensional closed Riemannian manifold.
Denote by $\mathscr{X}^1(M)$ the space of $C^1$ vector fields on $M$ endowed with the $C^1$ topology. For any $X\in\mathscr{X}^1(M)$, denote by $\phi_t^X$ (or $\phi_t$ if there are no confusions) the $C^1$ flow on $M$ generated by $X$.

During the long march to the stability conjecture, Liao~\cite{liao-shadowing1} and Ma\~n\'e~\cite{mane} noticed an important class of systems, which was named by Liao the {\it star systems}.
We call $X\in\mathscr{X}^1(M)$ a {\it star vector field}, if there is a $C^1$ neighborhood $\mathcal{U}$ of $X$ such that for any $Y\in\mathcal{U}$, all singularities and all periodic orbits of $\phi_t^Y$ are hyperbolic. The set of all star vector fields on $M$ is denoted by $\mathscr{X}^*(M)$ which is endowed with $C^1$-topology. Notice that $\mathscr{X}^*(M)$ is an open set in $\mathscr{X}^1(M)$. We can define star diffeomorphisms similarly.

The notion of star diffeomorphisms plays a key role in proving the famous stability conjecture. Actually, a diffeomorphism is star if and only if it is hyperbolic, i.e. satisfies Axiom A plus no cycle condition~\cite{aoki,hayashi}. Gan-Wen \cite{gan-wen} proved that nonsingular star vector fields satisfy Axiom A and the no-cycle condition.
However, a singular star vector field may fail to satisfy Axiom A, for instance, the famous Lorenz attractor \cite{G}.

To describe the geometric structure of Lorenz attractor, Morales, Pacifico
and Pujals \cite{MPP} developed a notion called singular hyperbolicity. See \cite{lgw} and \cite{MM} for higher dimensions. Then \cite{sgw} showed that a generic star vector field is singular hyperbolic under some homogeneous assumption for singularities.

For a long time, people believed that singular hyperbolicity is an appropriate notion to describe generic star vector fields, just like hyperbolicity to star diffeomorphisms.
Recently, however, da Luz \cite{daLuz} constructed an exciting example of 5-dimensional star vector field, which has two singularities with different indices robustly contained in one chain class, thus is robustly non-singular hyperbolic. Due to this example, Bonatti and da Luz \cite{bonatti-da luz} developed a new notion called multisingular hyperbolicity. They showed that every $X$ in an open dense subset of $\mathscr{X}^*(M)$ is multisingular hyperbolic. Conversely, every multisingular hyperbolic vector field is star.

Our main result is the following theorem.

\begin{theoremalph}\label{Thm:maintheorem}
	Every star vector field satisfies the intermediate entropy property.
\end{theoremalph}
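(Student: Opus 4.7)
The plan is to reduce the intermediate entropy property to its classical analogue on hyperbolic basic sets via a horseshoe approximation of topological entropy. Fix a star vector field $X\in\mathscr{X}^*(M)$ with $h_{\topo}(X)>0$ and a target value $h\in[0,h_{\topo}(X))$. The core step is to prove that $h_{\topo}(X)$ is approximated from below by hyperbolic basic sets: for every $h_0<h_{\topo}(X)$ there is a non-trivial hyperbolic basic set $\Lambda$ of $\phi_t^X$ with $h_{\topo}(\phi_t^X|_\Lambda)>h_0$. To construct such a $\Lambda$, I would first apply the variational principle to obtain an ergodic $\phi_t^X$-invariant measure $\mu$ with $h_\mu(X)>h_0$, and then invoke the structural theory of star flows (Gan--Wen for the nonsingular case, Bonatti--da Luz for the multisingular splitting on an open dense subset, Shi--Gan--Wen for the corresponding generic analysis) to show that any such positive-entropy ergodic measure is uniformly hyperbolic along a dominated splitting and keeps its support at positive distance from $\sing(X)$. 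A Katok-style Pliss/shadowing construction carried out on a cross-section transverse to $\mu$-typical orbits will then produce a horseshoe $\Lambda$ with $h_{\topo}(\phi_t^X|_\Lambda)>h_\mu(X)-\epsilon$.

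The second ingredient is classical. A non-trivial hyperbolic basic set $\Lambda$ of a $C^1$ flow admits a Markov partition, so $\phi_t^X|_\Lambda$ is, up to a continuous time change, topologically conjugate to a suspension flow over a transitive subshift of finite type. Such suspensions have the specification property, and Sigmund-type density results imply that the ergodic measures supported on $\Lambda$ realize every value in $[0,h_{\topo}(\phi_t^X|_\Lambda)]$ as their measure-theoretic entropy. Combining the two steps: given $h<h_{\topo}(X)$, fix $h_0$ with $h<h_0<h_{\topo}(X)$, apply the first step to produce a horseshoe $\Lambda$ with $h_{\topo}(\phi_t^X|_\Lambda)>h_0>h$, and then select an ergodic $\phi_t^X$-invariant measure $\nu$ supported on $\Lambda$ with $h_\nu(X)=h$; the case $h=0$ is handled by any hyperbolic periodic orbit inside $\Lambda$. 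The lower semi-continuity of $h_{\topo}$ on $\mathscr{X}^*(M)$ advertised in the abstract then follows immediately, since horseshoes are robust under $C^1$-small perturbations and their topological entropy varies continuously with $X$.

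The main obstacle is the horseshoe approximation itself. For nonsingular star flows the result reduces to Axiom A and is essentially classical, and on surfaces Katok's argument applies directly via Pesin theory. But a general star vector field may possess singularities accumulated by regular orbits, as in the Lorenz attractor, da Luz's $5$-dimensional example, or any multisingular hyperbolic flow, and Pesin theory breaks down near a singularity because the flow direction collapses and one exponent of the linear Poincar\'e flow degenerates. The hard part will therefore be to control the interaction of high-entropy ergodic measures with neighborhoods of $\sing(X)$: one must show that such measures give uniformly small mass to small neighborhoods of the singular set, and adapt the Liao--Ma\~n\'e linear Poincar\'e flow together with the sectional and multisingular hyperbolic splittings so that a Pliss-type selection of hyperbolic times on a cross-section produces a genuine horseshoe lying entirely in the regular part of $M$. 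This is where I expect essentially all of the paper's technical effort to be concentrated.
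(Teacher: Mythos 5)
Your overall architecture matches the paper exactly: (1) build a hyperbolic horseshoe of the flow whose topological entropy approximates $h_{\topo}(X)$, then (2) conjugate it to a suspension flow over a transitive SFT and produce ergodic measures of every prescribed entropy there. Step (1) as you outline it — Katok-style shadowing on a cross-section, Pesin/Pliss times, Liao's scaled linear Poincar\'e flow to fight the collapse of flow speed near $\sing(X)$ — is essentially Proposition~\ref{prop:LSC of entropy} of the paper. One inaccuracy there: you assert that a positive-entropy ergodic measure of a star flow ``keeps its support at positive distance from $\sing(X)$.'' That is false — the Lorenz attractor's physical measure contains the singularity in its support. What the paper uses is weaker and sufficient: singularities carry zero $\mu$-mass, so a Pesin block $\Delta_C$ of positive $\mu$-measure can be taken a fixed distance from $\sing(X)$, and the horseshoe is built from returns to a section through that block.

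The genuine gap is in step (2). You claim that specification and ``Sigmund-type density results'' immediately give that ergodic measures on the horseshoe realize every value in $[0,h_{\topo}(\phi_t^X|_\Lambda)]$. Sigmund's theorems give density of ergodic (and periodic) measures in the weak$^*$ topology, but entropy is only \emph{upper} semi-continuous, so density in weak$^*$ does not pass to realization of intermediate entropy values — this is precisely why Katok's conjecture for diffeomorphisms remains open. For a suspension flow over an SFT the obstruction is concrete: Abramov's formula $h_{\tilde\mu}(\sigma_t^\varphi)=h_\mu(\sigma)/\int\varphi\,d\mu$ has a $\mu$-dependent denominator, so the obvious interpolating family of Markov/Parry measures on the base does not linearly interpolate the flow entropies, and its endpoint is generally \emph{not} the flow's measure of maximal entropy (the paper's Remark after Proposition~\ref{Pro:intermediate entropy of full shift flow} flags exactly this). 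The paper's Section~\ref{Section:suspension flow of SFT} fills this in: pass to a higher-block recoding $(\Sigma_{A_N},\sigma)$ so the roof is nearly constant on $0$-cylinders, replace it by a rationally-related staircase roof $\varphi_N'$ up to an $\eta$-error in Abramov's formula (Lemma~\ref{Lem:embedding of SFT}), show that $(\Sigma_{A_N}^{\varphi_N'},\sigma_t^{\varphi_N'})$ is conjugate to a suspension of another irreducible SFT $(\Sigma_B,\sigma)$ with a \emph{constant} roof $\tau$, interpolate Markov measures continuously from a zero-entropy one to the Parry measure there (Lemma~\ref{Lem:intermediate entropy for SFT}), and finally transport back and apply the intermediate value theorem with the $\eta$-error absorbed. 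You would need to reproduce an argument of this type; ``cite Sigmund'' is not available.
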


Every multisingular hyperbolic vector field has the intermediate entropy property since it is star \cite{bonatti-da luz}. Recall that the Lorenz attractor is star and expansive \cite{APPV}, so it has a measure with maximal entropy. This implies the following corollary.

\begin{corollary*}
	Let $X$ be a vector field which defines the Lorenz attractor $\Lambda$. Then for every $h\in[0,h_{\topo}(X,\Lambda)]$, there exists an ergodic measure $\mu$ supported on $\Lambda$ satisfying $h_{\mu}(X)=h$.
\end{corollary*}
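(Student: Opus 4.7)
The plan is to split the interval $[0,h_{\topo}(X,\La)]$ into the open part $[0,h_{\topo}(X,\La))$, handled by Theorem~\ref{Thm:maintheorem}, and the endpoint $h_{\topo}(X,\La)$, handled by the expansiveness of the Lorenz attractor.

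For the open part, the Lorenz attractor is a star vector field's chain recurrence class sitting inside a trapping region $U$, so every $\phi_t$-invariant measure whose support meets $U$ is supported entirely in $\La$. The first step is therefore to verify that Theorem~\ref{Thm:maintheorem} can be applied in a class-localized form: the construction of the ergodic measures of prescribed entropy in Theorem~\ref{Thm:maintheorem} is carried out inside a fixed chain recurrence class, so for any $h\in[0,h_{\topo}(X,\La))$ it produces an ergodic measure $\mu_h$ with $h_{\mu_h}(X)=h$ and $\supp(\mu_h)\subset\La$. If one wants to avoid re-entering the proof of Theorem~\ref{Thm:maintheorem}, a clean alternative is to modify $X$ outside a trapping neighborhood of $\La$ to obtain a star vector field $\widetilde X$ whose only non-trivial chain class of positive entropy is $\La$; then Theorem~\ref{Thm:maintheorem} applied to $\widetilde X$ forces the resulting measures to live on $\La$, and they are automatically $\phi_t^X$-invariant since $X=\widetilde X$ near $\La$.

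For the endpoint $h=h_{\topo}(X,\La)$, use that $\La$ is expansive by \cite{APPV}. Expansiveness of a continuous flow on a compact invariant set implies upper semi-continuity of the measure-theoretic entropy $\mu\mapsto h_\mu(X)$ on the (compact, convex, metrizable) space $\mathcal{M}_{\phi}(\La)$ of $\phi_t$-invariant Borel probability measures supported on $\La$. Combined with the variational principle $\sup_{\mu\in\mathcal{M}_{\phi}(\La)}h_\mu(X)=h_{\topo}(X,\La)$, upper semi-continuity produces a measure $\mu_{\max}\in\mathcal{M}_{\phi}(\La)$ attaining the supremum. An ergodic decomposition of $\mu_{\max}$, together with the affine formula $h_{\mu_{\max}}(X)=\int h_\nu(X)\,\dd\mu_{\max}(\nu)$, yields an ergodic component $\mu$ with $h_\mu(X)=h_{\topo}(X,\La)$; this $\mu$ is also supported on $\La$.

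The main obstacle I anticipate is not the endpoint step, which is standard once expansiveness is in hand, but the localization of Theorem~\ref{Thm:maintheorem} to $\La$. One must either inspect the construction behind Theorem~\ref{Thm:maintheorem} and confirm that the ergodic measures it produces can be prescribed to lie in a given chain class, or carry out the perturbation argument sketched above without destroying the star property and without altering the dynamics inside the trapping region. Either route is routine in principle, but the former requires a careful bookkeeping of which periodic approximants are used, and the latter requires verifying that the cut-off vector field $\widetilde X$ remains in $\mathscr{X}^*(M)$ with $h_{\topo}(\widetilde X,\La)=h_{\topo}(X,\La)$.
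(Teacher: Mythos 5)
Your decomposition matches the paper's argument exactly: Theorem~\ref{Thm:maintheorem} handles $h\in[0,h_{\topo}(X,\Lambda))$, and expansiveness of the Lorenz attractor \cite{APPV} gives an ergodic measure of maximal entropy for the endpoint. The localization issue you flag is indeed routine and needs no perturbation of $X$: the horseshoe built in Proposition~\ref{prop:LSC of entropy} shadows orbits of a point $z\in\supp(\mu)\subset\Lambda$ within a tube of width proportional to the (small) scale $\delta$, so by choosing $\delta$ small enough the whole horseshoe lies in the trapping region $U$; being a compact invariant set inside $U$ it is contained in $\Lambda=\bigcap_{t\geq0}\phi_t(U)$, and every ergodic measure produced by Proposition~\ref{Pro:intermediate entropy of full shift flow} is then automatically supported on $\Lambda$.
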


When we study the dynamics of vector fields, even for star vector fields, the main difficulty is that the flow speed tends to zero when the orbit is close to singularities. This obstructs us to get uniform estimations for some structure of vector fields. For instance, we could not have uniform sizes of stable and unstable manifolds for star vector fields with singularities.

In order to prove Theorem~\ref{Thm:maintheorem}, an important step is to find hyperbolic sets of the star vector field away from singularities. Moreover, the topological entropies restricted on these hyperbolic sets can be arbitrarily close to the topological entropy of the vector field. This will help us to conquer the trouble caused by singularities. Since a hyperbolic set is persistent under $C^1$-perturbations, we have the following theorem, which states that the topological entropy of star vector fields is lower semi-continuous.

\begin{theoremalph}\label{Thm:LSC of entropy for star flows}
	The topological entropy function $h_{\topo}(\cdot):\mathscr{X}^*(M)\rightarrow\RR$ is lower semi-continuous.
\end{theoremalph}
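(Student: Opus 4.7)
The plan is to reduce Theorem~\ref{Thm:LSC of entropy for star flows} to the existence of hyperbolic approximating sets, which is the central technical ingredient developed on the way to Theorem~\ref{Thm:maintheorem}. More precisely, I expect to use (and will need to establish in the course of proving Theorem~\ref{Thm:maintheorem}) the following statement: for every $X \in \mathscr{X}^*(M)$ and every $\varepsilon > 0$, there exists a compact transitive hyperbolic set $\Lambda$ for $\phi_t^X$, disjoint from $\sing(X)$, such that $h_{\topo}(\phi_1^X|_\Lambda) > h_{\topo}(X) - \varepsilon$. This is precisely the intermediate output anticipated by the discussion preceding the statement.

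Granting this, I would fix $X \in \mathscr{X}^*(M)$ and $\varepsilon > 0$, choose $\Lambda$ as above, and invoke the standard structural stability of hyperbolic sets for $C^1$ flows: because $\Lambda$ is uniformly hyperbolic and bounded away from $\sing(X)$, there exist a $C^1$-neighborhood $\mathcal{U}$ of $X$ and, for each $Y \in \mathcal{U}$, a hyperbolic set $\Lambda_Y$ of $\phi_t^Y$ together with a homeomorphism $h_Y : \Lambda \to \Lambda_Y$ sending $\phi^X$-orbits to $\phi^Y$-orbits via a continuous positive time reparameterization $\tau_Y$ on $\Lambda$; moreover $h_Y \to \id$ and $\tau_Y \to 1$ uniformly as $Y \to X$ in $C^1$.

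Next I would use continuity of topological entropy under such orbit equivalences with time change close to the identity. Concretely, $\Lambda$ admits a Markov partition, realizing $\phi^X|_\Lambda$ as a suspension of a subshift of finite type with a Hölder roof function; under $C^1$-perturbation the symbolic model persists and the roof varies continuously, hence so does the topological entropy of the time-one map. Shrinking $\mathcal{U}$ if necessary, this gives
\[ h_{\topo}(Y) \;\geq\; h_{\topo}(\phi_1^Y|_{\Lambda_Y}) \;>\; h_{\topo}(\phi_1^X|_\Lambda) - \varepsilon \;>\; h_{\topo}(X) - 2\varepsilon \]
for every $Y \in \mathcal{U}$. Letting $\varepsilon \to 0$ yields $\liminf_{Y \to X} h_{\topo}(Y) \geq h_{\topo}(X)$, which is lower semi-continuity at $X$.

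The main obstacle is clearly the first step: approximating $h_{\topo}(X)$ from below by entropies of hyperbolic basic sets disjoint from $\sing(X)$. Because the flow slows down arbitrarily near singularities, Katok's horseshoe theorem for diffeomorphisms does not apply off the shelf, and one must exploit the star property, through singular or multisingular hyperbolicity, to control hyperbolic times and stable/unstable manifold sizes along orbits that pass close to $\sing(X)$. Once that approximation is in hand, Theorem~\ref{Thm:LSC of entropy for star flows} follows by the soft persistence and continuity argument sketched above.
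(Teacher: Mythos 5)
Your proposal is correct and follows essentially the same route as the paper: the key input is Proposition~\ref{Prop:lower-semi-continuity of entropy} (derived via Proposition~\ref{prop:LSC of entropy}, a Katok-type horseshoe construction adapted to star flows using Liao's scaled linear Poincar\'e flow estimates to handle singularities), which produces a hyperbolic horseshoe $\Lambda_\varepsilon$ away from $\sing(X)$ with $h_{\topo}(X,\Lambda_\varepsilon) > h_{\topo}(X)-\varepsilon$. The paper then invokes exactly the persistence-of-hyperbolic-sets and entropy-continuity argument that you spell out (the paper states this more tersely as a ``direct corollary,'' while you flesh out the orbit equivalence, time reparameterization, and symbolic model — a welcome elaboration, but not a different method).
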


Theorem \ref{Thm:LSC of entropy for star flows} implies that the topological entropy function is lower semi-continuous for Lorenz attractors and multisingular hyperbolic vector fields. The lower semi-continuity of toplogical entropy for systems rely on certain hyperbolicity, for instance, the surface diffeomorphisms and nonuniformly hyperbolic systems \cite{Katok2,KM}. Our result also follows the idea of Katok's argument \cite{Katok2}. The main novelty of our proof is using Liao's techniques \cite{liao-shadowing1,liao-shadowing2} to deal with the difficulty caused by singularities.

\subsection{Proof of main theorems}

In this subsection, we prove Theorem \ref{Thm:maintheorem} and Theorem \ref{Thm:LSC of entropy for star flows} based on two propositions.
We first find a hyperbolic set, whose entropy could approximate the whole topologically entropy of vector field. This hyperbolic set is a suspension of a horseshoe, which is topological conjugated to a suspension of a full shift. Then we consider the intermediate entropy property for the suspension of a full shift.

Recall that given a full shift with $k$-symbols $(\Sigma_k,\sigma)$ and a continuous function $\varphi:\Sigma_k\rightarrow \mathbb{R}^+$, one can define the {\it $\varphi$-suspension space}:
$$\Sigma_k^\varphi=\{(x,t):x\in \Sigma_k, t\in [0,\varphi(x)]\}/(x,\varphi(x))\sim(\sigma(x),0).$$
The {\it suspension flow} over $(\Sigma_k,\sigma)$ is defined as $\sigma^\varphi_t:\Sigma_k^\varphi\rightarrow \Sigma_k^\varphi$ by $\sigma^\varphi_t(x,s)=(x,s+t)$.
The function $\varphi$ is called the {\it roof function} of the suspension flow.

Let $\phi_t:M\rightarrow M$ be a $C^1$ flow generated by $X\in\mathscr{X}^1(M)$. A compact $\phi_t$-invariant set $\Lambda$ is called a {\it horseshoe} of $\phi_t$, if there exists a suspension flow $\sigma^\varphi_t:\Sigma_k^\varphi\rightarrow \Sigma_k^\varphi$ with continuous roof function $\varphi$, and a homeomorphism $\pi:\Sigma_k^{\varphi}\rightarrow\Lambda$, such that
$$
\phi_t\circ\pi=\pi\circ\sigma_t^{\varphi}.
$$
We can see that a horseshoe of $\phi_t$ does not contain any singularities. Actually, it has positive distance to $\sing(X)$, which is the set of all singularities of $X$.

Theorem \ref{Thm:LSC of entropy for star flows} is a direct corollary of the following proposition, which states that the topological entropy of a star vector field can be approximated by the topological entropy of hyperbolic horseshoes. The proof of this proposition will be given in Section \ref{Section:lower-semi-continuity}.

\begin{proposition}\label{Prop:lower-semi-continuity of entropy}
	Assume $\phi_t$ is the $C^1$ flow generated by $X\in\mathscr{X}^*(M)$ and $h_{\topo}(X)>0$. For every $\varepsilon>0$, there is a hyperbolic horseshoe $\Lambda_{\varepsilon}$ of $\phi_t$, such that
	$$
	h_{\topo}(X,\Lambda_{\varepsilon})>h_{\topo}(X)-\varepsilon.
	$$
\end{proposition}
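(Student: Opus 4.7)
The plan is to follow Katok's horseshoe construction \cite{Katok2,KM}, reworking it for flows with singularities via Liao's linear Poincar\'e flow \cite{liao-shadowing1,liao-shadowing2}. First, apply the variational principle to pick an ergodic $\phi_t$-invariant measure $\mu$ with $h_\mu(X) > h_{\topo}(X) - \varepsilon/2$. Because every singularity of $X$ is hyperbolic, Dirac measures at singularities have zero entropy, so $\mu$ is non-trivial and in particular gives positive mass to the regular set $M \setminus \sing(X)$.

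The core technical step is to promote $\mu$ to a ``uniformly hyperbolic'' measure in the sense of the (scaled) linear Poincar\'e flow. The star hypothesis rules out, on a whole $C^1$ neighborhood of $X$, both non-hyperbolic periodic orbits and non-hyperbolic singularities. By Liao--Ma\~n\'e-type ergodic closing together with the structure theory of star vector fields, ergodic measures whose support meets the regular set carry a dominated splitting for the linear Poincar\'e flow, with exponents uniformly bounded away from zero. This is the key point where the trouble caused by the vanishing of the flow speed near $\sing(X)$ is defeated: after rescaling, the Poincar\'e flow sees uniform hyperbolicity.

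Next I would localize to a cross-section disjoint from $\sing(X)$. By Poincar\'e recurrence and the hyperbolicity of $\mu$, a positive $\mu$-measure subset concentrates on a Pesin-type block $R$ of regular points on which the first return map $P$ has bounded return time and is uniformly hyperbolic in the transverse direction. On such a block, Katok's classical argument (approximation of $\mu$ by $(n,\delta)$-separated orbits, Pliss-type selection of hyperbolic times, and the Smale bracket closing using uniform stable/unstable manifolds) produces a topologically transitive locally maximal hyperbolic set $K \subset R$ for $P$ with $h_{\topo}(P,K) > h_\mu(P) - \varepsilon/4$. Passing to a transitive sub-horseshoe conjugate to a full shift and suspending $K$ by the return time, Abramov's formula together with the bounded roof function yields a horseshoe $\Lambda_\varepsilon$ of $\phi_t$ satisfying $h_{\topo}(X,\Lambda_\varepsilon) > h_{\topo}(X) - \varepsilon$.

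The hardest step is the second: making $\mu$ uniformly hyperbolic in a sense strong enough to produce definite-size stable and unstable manifolds for the Poincar\'e return map, despite the possibility that $\mu$-generic orbits approach $\sing(X)$ and slow down arbitrarily there. This is precisely where Liao's rescaled linear Poincar\'e flow and the star condition must be combined, and I expect it to occupy the bulk of the technical work in Section \ref{Section:lower-semi-continuity}.
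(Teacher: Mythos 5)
Your outline is essentially the paper's own proof. The paper isolates your middle steps as a separate Katok-type approximation result for a single nontrivial ergodic measure — built from the dominated splitting of the scaled linear Poincar\'e flow (Lemma~\ref{Lem:domination of SLPF}), the Pesin-block construction for $(C,\eta,T,E)$-$\psi^*_t$-contracting/expanding points, Liao's scaled stable manifold theorem (Theorem~\ref{Thm:Stable-Mfd}), and a cross-section placed at a recurrence point of the block away from $\sing(X)$ where the flow speed is bounded below — and then deduces the stated proposition by the variational principle, exactly your first step.
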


\begin{remark*}
	The hyperbolic horseshoe $\Lambda_{\varepsilon}$ is away from $\sing(X)$.
	This proposition implies that when we remove a very small neighborhood of all singularities, the entropy of a star vector field restricted on the maximal invariant set of the rest part of the manifold can approximate the whole topological entropy. This seems natural, because the vanishing of flow speed when orbits are close to singularities does not generate complexity.
	
	For instance, let $X$ be a vector field which defines the Lorenz attractor $\Lambda$ with attracting region $U$. For the singularity $\sigma\in\Lambda$, and every $\delta>0$, we denote by
	$$
	\Lambda_{\delta}=\bigcap_{t\in\RR}\phi_t(U\setminus B_{\delta}(\sigma)),
	$$
	where $B_{\delta}(\sigma)$ is the $\delta$-neighborhood of $\sigma$. Here $\Lambda_{\delta}$ is a hyperbolic set and we have
	$$
	h_{\topo}(X, \Lambda_{\delta}) \longrightarrow h_{\topo}(X,\Lambda) \qquad {\rm as}\qquad \delta\rightarrow0.
	$$
\end{remark*}

It is well known that topological conjugacy preserves topological entropy. So to prove Theorem \ref{Thm:maintheorem}, we only need to show that the suspension flow of a full shift has the intermediate entropy property. Actually, we prove that the suspension flow of shift of finite type(SFT) has intermediate entropy property, see Proposition \ref{Pro:intermediate entropy of suspension flow}.

\begin{proposition}\label{Pro:intermediate entropy of full shift flow}
	Let $(\Sigma_k^\varphi,\sigma_t^\varphi)$ be a suspension flow of a full shift with $k\geq2$. Then for any constant $h\in(0,h_{\topo}(\sigma_t^\varphi,\Sigma_k^\varphi))$, there exists a $\sigma_t^\varphi$-ergodic measure $\tilde{\mu}$ satisfying $$h_{\tilde{\mu}}(\sigma_t^\varphi,\Sigma_k^\varphi)=h.$$
\end{proposition}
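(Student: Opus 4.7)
The plan is to use Abramov's formula to reduce the problem to finding appropriate ergodic measures on the base shift $(\Sigma_k,\sigma)$, to bracket the target ratio by two ergodic measures, and then to use the specification property of the full shift to build an ergodic interpolation between them, finishing by the intermediate value theorem. Abramov's formula gives a bijection between ergodic $\sigma_t^{\varphi}$-invariant Borel probabilities on $\Sigma_k^{\varphi}$ and ergodic $\sigma$-invariant Borel probabilities on $\Sigma_k$: for ergodic $\nu$ on $\Sigma_k$, the lift $\tilde\nu:=(\nu\times\mathrm{Leb})/\int\varphi\,d\nu$ is ergodic for $\sigma_t^{\varphi}$ with
\[
h_{\tilde\nu}(\sigma_t^{\varphi})=\frac{h_\nu(\sigma)}{\int\varphi\,d\nu}.
\]
Combined with the variational principle this gives $H:=h_{\topo}(\sigma_t^{\varphi},\Sigma_k^{\varphi})=\sup_{\nu}h_\nu(\sigma)/\int\varphi\,d\nu$, the supremum taken over ergodic $\sigma$-invariant $\nu$. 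It therefore suffices, for each $h\in(0,H)$, to produce an ergodic $\nu$ with $h_\nu(\sigma)=h\int\varphi\,d\nu$, i.e.\ an ergodic zero of the affine continuous functional $F(\nu):=h_\nu(\sigma)-h\int\varphi\,d\nu$ on the space of $\sigma$-invariant measures.

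I bracket the sign of $F$ by two ergodic measures. Any periodic orbital measure $\nu_p$ on $\Sigma_k$ is ergodic with $h_{\nu_p}(\sigma)=0$, so $F(\nu_p)=-h\int\varphi\,d\nu_p<0$. On the other side, $h<H$ together with the defining supremum yields an ergodic $\nu_1$ with $F(\nu_1)>0$. The key step is then to construct an ergodic one-parameter family $\{\eta_\lambda\}_{\lambda\in[0,1]}$ on $\Sigma_k$ with $\eta_0=\nu_p$, $\eta_1=\nu_1$ (or at least close enough), such that $\lambda\mapsto\eta_\lambda$ is weak-$\ast$ continuous and $\lambda\mapsto h_{\eta_\lambda}(\sigma)$ is continuous. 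For this I invoke the specification property of the full shift: for each $\lambda$ one glues together, in proportions $\lambda$ and $1-\lambda$, long orbit segments typical for $\nu_1$ and $\nu_p$ respectively, and extracts an ergodic measure $\eta_\lambda$ supported on the resulting subshift. Sigmund-type arguments for systems with specification then give $h_{\eta_\lambda}(\sigma)\approx\lambda h_{\nu_1}(\sigma)$, and by weak-$\ast$ continuity of $\eta_\lambda$ together with continuity of $\varphi$, $\int\varphi\,d\eta_\lambda\approx\lambda\int\varphi\,d\nu_1+(1-\lambda)\int\varphi\,d\nu_p$, with errors controllable by the gluing lengths, which can be chosen to vary continuously in $\lambda$.

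Once this family is in hand, $\lambda\mapsto F(\eta_\lambda)$ is continuous with $F(\eta_0)<0<F(\eta_1)$, and the intermediate value theorem yields $\lambda^{*}\in(0,1)$ with $F(\eta_{\lambda^{*}})=0$; then the Abramov lift $\tilde\mu:=\widetilde{\eta_{\lambda^{*}}}$ is the required ergodic measure, with $h_{\tilde\mu}(\sigma_t^{\varphi},\Sigma_k^{\varphi})=h$. The main obstacle is the construction of the ergodic family $\{\eta_\lambda\}$: the naive convex combination $\lambda\nu_1+(1-\lambda)\nu_p$ has the correct averaged statistics but is not ergodic, so one has to genuinely mix the two patterns via specification while carefully choosing the gluing parameters as continuous functions of $\lambda$ so that both the entropy and the $\varphi$-integral vary continuously. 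I expect this is exactly where the hypothesis that the base is a full shift (rather than merely a subshift) enters, through the strong form of specification available.
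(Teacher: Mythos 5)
Your top-level strategy — Abramov's formula, the affine functional $F(\nu)=h_\nu(\sigma)-h\int\varphi\,d\nu$, bracketing its sign with a periodic measure and some $\nu_1$ near the supremum, and then the intermediate value theorem along a continuous ergodic path — is the same skeleton the paper uses. The gap is in the middle step: you assert, but do not establish, the existence of a weak-$*$ continuous one-parameter family of ergodic measures $\{\eta_\lambda\}$ from $\nu_p$ to $\nu_1$ along which the metric entropy varies continuously. This is not a routine consequence of specification. Sigmund-type results give that ergodic measures are entropy-dense (for each fixed invariant $\mu$ one finds ergodic $\mu_n\to\mu$ with $h_{\mu_n}\to h_\mu$), but they do not produce a \emph{path} $\lambda\mapsto\eta_\lambda$ with $\lambda\mapsto h_{\eta_\lambda}(\sigma)$ continuous; on a full shift the entropy function is only upper semicontinuous on $\mathcal M_{inv}$, so entropy can drop discontinuously along any weak-$*$ convergent family, and the gluing lengths in a specification construction are integers, so "choosing them as continuous functions of $\lambda$" is not available in any direct sense. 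Moreover your endpoint $\nu_1$ is generally the equilibrium state for $-H\varphi$ (not a Markov or Bernoulli measure when $\varphi$ is nonconstant), so restricting to any explicit parametric family of nice measures on the base $\Sigma_k$ fails to reach the bracket when $h$ is close to $H$.

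The paper circumvents exactly this obstruction by changing coordinates first. It passes to a higher block presentation $\Sigma_{A_N}$ of the full shift so that $\varphi\circ g_N$ has small oscillation on $1$-cylinders, replaces the roof by a locally constant function $\varphi_N'$ with values in a single arithmetic lattice $\{l_i\tau\}$ (changing the suspension entropy by at most $\eta$), and then shows the resulting special flow is flow-conjugate to a suspension over a new SFT $\Sigma_B$ with a \emph{constant} roof $\tau$. For a constant roof Abramov's formula gives $h_{\tilde\nu}=h_\nu/\tau$, so the problem reduces to intermediate entropy for the base SFT itself, and there the paper exhibits an explicit continuous path $t\mapsto\mu(t)$ of ergodic Markov measures (via stochastic matrices $P(t)$ interpolating between a $0$-$1$ matrix and the Parry stochastic matrix) along which $t\mapsto h_{\mu(t)}(\sigma)=-\sum p_i(t)p_{ij}(t)\log p_{ij}(t)$ is manifestly continuous, running from $0$ to the topological entropy. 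This Markov path is what replaces your specification gluing, and the reduction to a constant roof is what makes it reach the maximum. To repair your argument you would either need to carry out this recoding-to-constant-roof step, or else supply a genuine proof (not just a heuristic) that specification yields a continuous ergodic path with continuous entropy terminating at an equilibrium state for $-H\varphi$.
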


\begin{remark*}
	The intermediate entropy property of full shift is obvious. Even the measure of suspension has a 1-1 correspondence with the measure of full shift. However, since the roof function is not a constant in general, entropies of the two corresponding measures are not linearly dependent. See the beginning of Section \ref{Section:proof of proposition 1.5}.
\end{remark*}

Now we prove Theorem \ref{Thm:maintheorem} by admitting Proposition \ref{Prop:lower-semi-continuity of entropy} and Proposition \ref{Pro:intermediate entropy of full shift flow}, whose proofs will be given in Section \ref{Section:lower-semi-continuity} and Section \ref{Section:suspension flow of SFT} respectively.

\begin{proof}[ Proof of Theorem~\ref{Thm:maintheorem}]
	Let $\phi_t: M\rightarrow M$ be the flow generated by $X\in\mathscr{X}^*$ and assume that $h_{\topo}(X)>0$.
	Fix any constant $h\in [0,h_{\topo}(X))$. We aim to prove that there is an ergodic measure $\mu$ of $X$ such that $h_{\mu}(X)=h$.

	If $h=0$, notice that if $X$ has no singularities, then $\phi_t$ must admit periodic orbits by $\cite{gan-wen}$. Take $\mu$ to be the Dirac measure supported on a periodic orbit or a singularity of $X$, then $h_{\mu}(X)=0$.
	
	If $h\in (0,h_{\topo}(X))$, by Proposition~\ref{Prop:lower-semi-continuity of entropy}, there is a hyperbolic horseshoe $\Lambda$ such that $h_{\topo}(X,\Lambda)>h$.
	Assume the suspension of full shift $(\Sigma_k^\varphi,\sigma_t^\varphi)$ conjugates to $(\Lambda,\phi_t)$, then
	$$
	h_{\topo}(\sigma_t^\varphi,\Sigma_k^\varphi)=h_{\topo}(X,\Lambda).
	$$
	By Proposition~\ref{Pro:intermediate entropy of full shift flow}, there exists an ergodic measure $\tilde{\mu}$ of $(\Sigma_k^\varphi,\sigma_t^\varphi)$ satisfying $h_{\tilde{\mu}}(\sigma_t^\varphi,\Sigma_k^\varphi)=h$. Then we conclude Theorem~\ref{Thm:maintheorem} through the conjugation.
\end{proof}

\subsection*{Acknowledgements}
The authors would like to thank Sylvain Crovisier, Shaobo Gan, Peng Sun, Xiao Wen, Dawei Yang and Jiagang Yang for many useful discussions and suggestions.~

\bigskip

\section{Lower semi-continuity of topological entropy for star flows}\label{Section:lower-semi-continuity}

In this section, we prove Proposition~\ref{Prop:lower-semi-continuity of entropy}. Firstly, we introduce some known results of star vector fields.

\subsection{Preliminaries of star vector fields}


Given $X\in\mathscr{X}^1(M)$. Recall that we denote by $\phi_t:=\phi^X_t:M\rightarrow M$ the $C^1$-flow generated by $X$ and $\Phi_t= \textmd{d}\phi_t:TM\rightarrow TM$ the tangent flow of $\phi_t$.

For any point $x\in M\setminus \sing(X)$, denote by $\mathcal{N}_x$ the orthogonal complement space of the flow direction $X(x),$ i.e.
$$\mathcal{N}_x=\{v\in T_xM: v\bot X(x)\}.$$
Denote by $\mathcal{N}=\bigcup_{x\in M\setminus \sing(X)}\mathcal{N}_x$. The {\it linear Poincar\'e flow} $\psi_t:\mathcal{N}\rightarrow\mathcal{N}$ of $X$ is defined as: $$\psi_t(v)=\Phi_t(v)-\frac{\langle\Phi_t(v),X(\phi_t(x))\rangle}{|X(\phi_t(x))|^2}X(\phi_t(x)),$$
where $x\in M\setminus\sing(X)$, $v\in\mathcal{N}_x$ and $\psi_t(v)\in\mathcal{N}_{\phi_t(x)}$.

Fix $T>0$. It is easy to see that the norm
$$
\|\psi_T\|=\sup\{|\psi_T(v)|: v\in\mathcal{N}, |v|=1\}
$$
is uniformly upper bounded on $\mathcal N$, although $M\setminus \sing(X)$ may be not compact. Denote by
$$
m(\psi_T)=\inf\{|\psi_T(v)|: v\in\mathcal{N}, |v|=1\}
$$
the mininorm of $\psi_T$. Since $m(\psi_T)=\|\psi_T^{-1}\|^{-1}=\|\psi_{-T}\|^{-1}$, the mininorm $m(\psi_T)$ is uniformly bounded from $0$ on $\mathcal N$ for any fixed $T>0$.

We also need the so called {\it scaled linear Poincar\'e flow} $\psi^*_t:\mathcal{N}\rightarrow\mathcal{N}$, which is defined as:
$$\psi^*_t(v)=\frac{|X(x)|}{|X(\phi_t(x))|}\psi_t(v)=\frac{1}{\|\Phi_t|_{\langle X(x)\rangle}\|}\psi_t(v),$$
where $x\in M\setminus\sing(X)$, $v\in\mathcal{N}_x$ and $\langle X(x)\rangle$ is the 1-dimensional subspace of $T_xM$ generated by the flow direction $X(x)$.

For every $x\in M\setminus\sing(X)$ and any $\delta>0$ small, we can define the normal manifold of $x$ to be
$$
N_x(\delta)=\exp_x(\mathcal{N}_x(\delta)),
$$
where $\mathcal{N}_x(\delta)=\{v\in\mathcal{N}_x:|v|\leq\delta\}$. It is clear that when $\delta$ is small enough, $N_x(\delta)$ is an imbedded submanifold which is diffeomorphic to $\mathcal{N}_x(\delta).$
Moreover, $N_x(\delta)$ is a local cross section transverse to the flow.

For any $T>0$ and $x\in M\setminus\sing(X),$ the flow $\phi_t$ defines a local holonomy map which is called the \emph{Poincar\'e map}:
$$
{\cal P}_{x,\phi_T(x)}: N_x(\delta)\longrightarrow N_{\phi_T(x)}(\delta'),
$$
where $\delta$ and $\delta'$ depend on the choice of $x$ and $T.$ Moreover, it is not hard to see that
$$
D_x{\cal P}_{x,\phi_T(x)}=\psi_T|_{\mathcal{N}_x}:\mathcal{N}_x\longrightarrow\mathcal{N}_{\phi_T(x)}.
$$

The following lemma is a direct corollary of Lemma 2.3 in \cite{gan-yang}.

\begin{lemma}\label{Lem:section-size}
	Given $X\in\mathscr{X}^1(M)$ and $T>0$, there exists $\delta_T>0$ such that for any $x\in M\setminus\sing(X)$, the Poincar\'e map
	$$
	{\cal P}_{x,\phi_T(x)}:N_x(\delta_T|X(x)|)\longrightarrow N_{\phi_T(x)}(\delta_TC_T|X(\phi_T(x))|),
	$$
	is well defined, where $C_T=\|\psi_T\|$ is a bounded constant.
\end{lemma}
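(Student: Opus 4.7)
The plan is to build, around every orbit segment $\phi_{[0,T]}(x)$, a flow box whose radius is uniformly proportional to the flow speed $|X(x)|$, and to read off the Poincar\'e map from this box. Since $M$ is compact and $X$ is $C^1$, the tangent flow $\Phi_s$ is uniformly bounded for $|s|\le T$, so $C_T=\|\psi_T\|$ and $m(\psi_T)^{-1}$ are uniform constants over $\cN$, as already noted in the excerpt.

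For fixed $x\in M\setminus\sing(X)$, I would consider the parametrization
$$
\Psi_x:\cN_x\times[0,T]\longrightarrow M,\qquad \Psi_x(v,s)=\phi_s\bigl(\exp_x(v)\bigr),
$$
and rescale it by setting $\tilde\Psi_x(\tilde v,s)=\Psi_x(|X(x)|\tilde v,s)$ on the unit disk $\cN_x(1)\times[0,T]$. The crucial calculation is that $\tilde\Psi_x$ and its first two derivatives are uniformly bounded in $x$, and that its derivative at $(0,s)$ is uniformly invertible; the factor $|X(x)|$ in the rescaling exactly offsets the apparent degeneracy of the flow direction near $\sing(X)$. A uniform inverse function theorem then yields a single constant $\delta_T>0$ such that $\tilde\Psi_x$ is a diffeomorphism onto a tubular neighborhood of $\phi_{[0,T]}(x)$ for every non-singular $x$, and the analogous statement holds simultaneously for $\tilde\Psi_{\phi_T(x)}$. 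Consequently every $y\in N_x(\delta_T|X(x)|)$ flows forward inside this flow box and meets $N_{\phi_T(x)}$ transversally exactly once near time $s=T$, which defines $\cP_{x,\phi_T(x)}(y)$; expressing $\cP_{x,\phi_T(x)}$ in the doubly rescaled charts, with derivative at the origin equal to $\psi_T|_{\cN_x}$ and uniform $C^2$ control on the nonlinear remainder, pins the image inside $N_{\phi_T(x)}(\delta_T C_T|X(\phi_T(x))|)$ after possibly shrinking $\delta_T$ once and for all.

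The main obstacle is precisely maintaining these uniform estimates as $x\to\sing(X)$: a naive exponential chart on $M$ produces tubular-neighborhood sizes that collapse and quadratic error terms in the Poincar\'e map that blow up. The entire argument rests on the rescaling by $|X(x)|$, which turns the family $\{\tilde\Psi_x\}_{x\in M\setminus\sing(X)}$ into a uniformly smooth family of charts despite $X$ itself vanishing at singularities; this is the mechanism that lets a single $\delta_T$ work for all non-singular basepoints at once.
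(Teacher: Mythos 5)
The paper gives no direct proof of this lemma: it cites Lemma~2.3 of Gan--Yang \cite{gan-yang}, which is established there by exactly the rescaled flow-box construction you describe (Liao's rescaling by $|X(x)|$). So your proposal is a reconstruction of the intended argument rather than an alternative route, and its overall structure is right.

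One step, however, is stated in a way that would not survive scrutiny if read literally. For the singly-rescaled map $\tilde\Psi_x(\tilde v,s)=\phi_s\bigl(\exp_x(|X(x)|\tilde v)\bigr)$, the derivative at $(0,s)$ sends $(w,\tau)$ to $|X(x)|\,\Phi_s|_{\cN_x}w+\tau\, X(\phi_s(x))$; both its norm \emph{and} its mininorm are of order $|X(x)|$, so the inverse blows up like $1/|X(x)|$ as $x\to\sing(X)$. Calling this ``uniformly invertible'' is therefore misleading, and a naive uniform inverse function theorem applied only with ``derivative uniformly invertible, second derivative uniformly bounded'' would not yield a uniform radius. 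The argument does go through, but for a finer reason: the second derivatives of $\tilde\Psi_x$ in the transverse directions decay one order faster, like $|X(x)|^2$, so the inverse-function-theorem radius (comparable to the ratio of the mininorm of $D\tilde\Psi_x(0,s)$ to its Lipschitz constant) remains bounded below independently of $x$. The cleaner formulation, which you in effect adopt at the end when you invoke the ``doubly rescaled charts'', is to rescale the target by $1/|X(\phi_s(x))|$ as well: then the linearization becomes the scaled linear Poincar\'e flow $\psi_s^*$, which \emph{is} genuinely uniformly bounded with uniformly bounded inverse, the quadratic remainder after double rescaling is $O(|X(x)|)$ and hence uniformly small (vanishing near $\sing(X)$), and a single $\delta_T$ drops out at once. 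With this precision the proof is sound and matches the cited argument.
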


Moreover, we have the following lemma which states the uniform continuity of Poincar\'e map up to flow speed.

\begin{lemma}[Lemma 2.4 in \cite{gan-yang}]\label{Lem:Continuity}
	Given $X\in\mathscr{X}^1(M)$ and $T>0$. Shrinking $\delta_T>0$ in Lemma \ref{Lem:section-size} if necessary, for any $x\in M\setminus\sing(X)$, consider the Poincar\'e map
	$$
	{\cal P}_{x,\phi_T(x)}:N_x(\delta_T|X(x)|)\longrightarrow N_{\phi_T(x)}(\delta_TC_T|X(\phi_T(x))|),
	$$
	we have that $D{\cal P}_{x,\phi_T(x)}$ is uniformly continuous, i.e. for any $\epsilon>0,$ there exists $\rho\in(0,\delta_T]$ such that for any $x\in M\setminus\sing(X)$ and $y,z\in N_x(\delta_T|X(x)|)$ with $|y-z|\leq\rho|X(x)|,$ we have
	$$
	|D_y{\cal P}_{x,\phi_T(x)}-D_z{\cal P}_{x,\phi_T(x)}|<\epsilon.
	$$
\end{lemma}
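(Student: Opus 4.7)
My plan is to prove the lemma by rescaling the normal sections to have uniform size and then invoking uniform continuity of $D\phi_T$ on the compact manifold $M$. Fix $T>0$ and let $\delta_T$ be as in Lemma \ref{Lem:section-size}. For each $x\in M\setminus\sing(X)$, introduce the rescaled coordinate $\hat v=v/|X(x)|$ on $\mathcal{N}_x$, so that the $x$-dependent domain $\mathcal{N}_x(\delta_T|X(x)|)$ becomes the ball $\mathcal{N}_x(\delta_T)$, of size independent of $x$. In these rescaled coordinates, define
$$
\hat{\cal P}_x(\hat v)\;=\;\frac{1}{|X(\phi_T(x))|}\bigl(\exp_{\phi_T(x)}^{-1}\circ{\cal P}_{x,\phi_T(x)}\circ\exp_x\bigr)\bigl(|X(x)|\hat v\bigr),
$$
so that $D_0\hat{\cal P}_x=\psi_T^*|_{\mathcal{N}_x}$ is the scaled linear Poincar\'e flow at $x$, which is uniformly bounded above and below. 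It then suffices to prove that $D\hat{\cal P}_x$ is uniformly continuous on $\mathcal{N}_x(\delta_T)$ with a modulus of continuity independent of $x$.

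Next I would compute the derivative pointwise. Writing ${\cal P}_{x,\phi_T(x)}(y)=\phi_{\tau(y)}(y)$, where $\tau(y)\approx T$ is the first-hitting time to $N_{\phi_T(x)}$ (well-defined and $C^1$ in $y$ by the implicit function theorem applied to the normal-section equation), one obtains
$$
D_y{\cal P}\,w\;=\;\Pi_y\bigl(D_y\phi_{\tau(y)}\cdot w\bigr),
$$
where $\Pi_y:T_{{\cal P}(y)}M\to T_{{\cal P}(y)}N_{\phi_T(x)}$ is the projection along $X({\cal P}(y))$ onto the target section. After the pullback through $\exp$ and rescaling by $|X(x)|/|X(\phi_T(x))|$, the rescaled derivative $D_{\hat v}\hat{\cal P}_x$ becomes a continuous composition of $D\phi_{\tau(y)}$ at $y=\exp_x(|X(x)|\hat v)$, the flow directions at $y$ and $\phi_{\tau(y)}(y)$, and the scalar rescaling factor. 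Each ingredient is uniformly bounded and depends continuously on $\hat v$.

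To upgrade this to uniform continuity in $x$, I would argue by contradiction through compactness. Suppose the conclusion fails: there exist $\varepsilon_0>0$, $x_n\in M\setminus\sing(X)$ and $\hat y_n,\hat z_n\in\mathcal{N}_{x_n}(\delta_T)$ with $|\hat y_n-\hat z_n|\to 0$ but $|D_{\hat y_n}\hat{\cal P}_{x_n}-D_{\hat z_n}\hat{\cal P}_{x_n}|\geq\varepsilon_0$. Passing to a subsequence, $x_n\to x_\infty\in M$. If $x_\infty\notin\sing(X)$, then $|X(x_n)|$ is bounded below, so $\hat{\cal P}_{x_n}$ converges in $C^1$ on a fixed neighborhood of the limit base point, and the pointwise continuity of $D\hat{\cal P}_{x_\infty}$ at the common limit $\hat y_\infty=\hat z_\infty$ gives a contradiction. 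If $x_\infty\in\sing(X)$, the unscaled points satisfy $|y_n-x_n|,|z_n-x_n|\leq\delta_T|X(x_n)|\to 0$, while $\tau(y_n),\tau(z_n)\in[T-\eta,T+\eta]$ for small $\eta$; uniform continuity of $D\phi_s$ on the compact set $M\times[T-\eta,T+\eta]$, combined with uniform bounds on the rescaling factors and the projections $\Pi$, forces $|D_{\hat y_n}\hat{\cal P}_{x_n}-D_{\hat z_n}\hat{\cal P}_{x_n}|\to 0$, again a contradiction.

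The main obstacle is the degeneration near singularities, where both the section size $\delta_T|X(x)|$ and the flow speeds $|X(\phi_t(x))|$ collapse simultaneously. The rescaling by $|X(x)|$ is the correct device precisely because it matches the characteristic infinitesimal scale of the flow there: uniform continuity of $D\phi_T$ on the compact manifold $M$ translates, after rescaling, into uniform continuity of the rescaled Poincar\'e map derivative on the uniformly-sized rescaled sections. Shrinking $\delta_T$ once more if needed absorbs the error coming from the varying hitting time $\tau$.
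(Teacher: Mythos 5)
Since the paper cites this lemma from Gan--Yang without reproducing its proof, there is no in-paper argument to compare against; I can only assess your proposal on its own terms.

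Your rescaling device is exactly the right idea: introducing $\hat v = v/|X(x)|$ matches the characteristic scale of the section and of the flow speed, turns the $x$-dependent domain into a ball of uniform size $\delta_T$, and makes $D_0\hat{\cal P}_x=\psi_T^*|_{\mathcal{N}_x}$, whose norm and mininorm are uniformly bounded. This is precisely the philosophy behind the scaled linear Poincar\'e flow of Liao and Gan--Yang. Your Case~1 (limit base point away from $\sing(X)$) is straightforward and correct, since on any compact subset of $M\setminus\sing(X)$ the Poincar\'e maps $\hat{\cal P}_x$ form a $C^1$-continuous family.

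The gap is in Case~2, where it matters. You write that ``uniform continuity of $D\phi_s$ on the compact set $M\times[T-\eta,T+\eta]$, combined with uniform bounds on the rescaling factors and the projections $\Pi$, forces $|D_{\hat y_n}\hat{\cal P}_{x_n}-D_{\hat z_n}\hat{\cal P}_{x_n}|\to 0$.'' But this is essentially asserting the conclusion. Writing the unscaled derivative as
$$D_y{\cal P}\,w \;=\; D_y\phi_{\tau(y)}w \;+\; X\bigl({\cal P}(y)\bigr)\,\langle d\tau_y,w\rangle,$$
the difference $D_{y}{\cal P}-D_{z}{\cal P}$ contains, besides the harmless term $D_{y}\phi_{\tau(y)}-D_{z}\phi_{\tau(z)}$, the term $X({\cal P}(y))\,d\tau_y - X({\cal P}(z))\,d\tau_z$. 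Here $d\tau_y$ has magnitude of order $1/|X(\phi_T(x))|$ and $X({\cal P}(y))$ has magnitude of order $|X(\phi_T(x))|$, so the product is a quotient of degenerating quantities of orders $|X(\phi_T(x))|$ and $|X(\phi_T(x))|^2$. Uniform \emph{boundedness} of the projection (equivalently, of this quotient) is not enough; one must prove that this term, and the normalized flow direction $X({\cal P}(y))/|X(\phi_T(x))|$ entering $\Pi_y$, are \emph{uniformly continuous in $y$ in the scaled metric} $|y-z|/|X(x)|$, with modulus independent of $x$ as $x$ approaches a singularity. That is the actual content of the lemma, and your contradiction/compactness framing does not help here: near a singularity there is no nondegenerate limit Poincar\'e map to appeal to, so Case~2 has to be handled by direct scaled estimates (bounding $|{\cal P}(y)-{\cal P}(z)|$, $|X({\cal P}(y))-X({\cal P}(z))|$, and $|\tau(y)-\tau(z)|$ in terms of $|y-z|/|X(x)|$). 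These estimates do go through with the tools available (in particular the control $|\tau(y)-T|<\theta\,d(x,y)$ of Lemma~\ref{Lem:Time-control} and the Lipschitz bound on $X$), but they need to be carried out rather than summarized as ``uniform bounds on $\Pi$.''
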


For the Poincar\'e  map, we have the following estimation of return time.
\begin{lemma}[Lemma 4.5 in \cite{wyz}]\label{Lem:Time-control}
	Given $X\in\mathscr{X}^1(M),$ let $\delta=\delta_1$ and $C_1$ be the constants in Lemma \ref{Lem:section-size} associated to the time-one map $\phi_1.$ Then by shrinking $\delta$ if necessary, there exits $\theta>0$ such that for any $x\in M\setminus\sing(X)$ and $y\in N_x(\delta|X(x)|),$ there exists a unique $t=t(y)\in(0,2)$ satisfying the following
	$$	\phi_t(y)\in N_{\phi_1(x)}(\delta C_1|X(\phi_1(x))|) \qquad and \qquad |t(y)-1|<\theta\cdot d(x,y).
	$$
\end{lemma}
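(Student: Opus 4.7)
The plan is to apply the implicit function theorem to the defining equation of the first‑return time and then to upgrade the resulting pointwise estimate to one uniform in $x\in M\setminus\sing(X)$, by exploiting the scaled‑disk framework of Lemmas~\ref{Lem:section-size} and~\ref{Lem:Continuity}.

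First, set $x_1=\phi_1(x)$ and introduce the scalar function
$$
F(y,t)\;=\;\frac{\bigl\langle \exp_{x_1}^{-1}(\phi_t(y)),\, X(x_1)\bigr\rangle}{|X(x_1)|^{2}},
$$
which is $C^1$ on a neighborhood of $(x,1)$ and whose zeros in $t$ are precisely the instants at which $\phi_t(y)\in N_{x_1}$. A short computation yields $F(x,1)=0$, $\partial_t F(x,1)=1$, and $\partial_y F(x,1)[v]=\langle \Phi_1 v,X(x_1)\rangle/|X(x_1)|^{2}$. The $C^1$ implicit function theorem then produces a unique $C^1$ function $y\mapsto t(y)$ defined on a neighborhood of $x$, with $t(x)=1$ and first‑order expansion
$$
t(y)-1\;=\;-\,\frac{\langle \Phi_1(y-x),\,X(x_1)\rangle}{|X(x_1)|^{2}}\;+\;o(|y-x|),
$$
which already gives a local Lipschitz bound $|t(y)-1|<\theta_x\cdot d(x,y)$ with a constant depending a priori on $x$.

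The crux of the argument — and the main obstacle — lies in upgrading $\theta_x$ to a \emph{uniform} constant $\theta$, despite the fact that $|X(x)|\to 0$ as $x$ approaches $\sing(X)$, which makes many naive bounds degenerate. To handle this I would restrict $y$ to the scaled disk $N_x(\delta|X(x)|)$ and pass to the renormalised variable $(y-x)/|X(x)|$. In this setup, Lemma~\ref{Lem:section-size} guarantees that the Poincar\'e map $\cP_{x,\phi_1(x)}$ is well defined on the whole scaled disk with target size controlled by $\delta C_1|X(\phi_1(x))|$, uniformly in $x$; and Lemma~\ref{Lem:Continuity} supplies the uniform‑continuity estimate for $D\cP$ that is needed to propagate the infinitesimal IFT bound at $y=x$ to a genuine Lipschitz bound across the entire scaled disk, with a constant independent of $x$. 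Choosing $\delta$ small enough then extracts the uniform $\theta$ asserted by the lemma.

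Uniqueness of $t(y)$ inside $(0,2)$ follows from the same scaled‑disk framework: the flow meets $N_{x_1}$ with transverse speed $|X(x_1)|$, giving quantitative transversality that, after shrinking $\delta$ once more if necessary, prevents any other $t\in(0,2)$ from mapping $y$ into $N_{\phi_1(x)}(\delta C_1|X(\phi_1(x))|)$; indeed, within this window the scaled Poincar\'e map is a diffeomorphism onto its image, so only one preimage is possible. Combining these three steps yields both conclusions of the statement simultaneously for every $x\in M\setminus\sing(X)$.
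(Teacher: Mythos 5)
This lemma is quoted from Lemma 4.5 of \cite{wyz} and the paper itself offers no proof, so there is no in-paper argument to compare against; your proposal is judged on its own terms. The implicit-function-theorem setup is the natural one, and the pointwise computation at $(x,1)$ (including the first-order expansion of $t(y)$) is correct. However, there is a genuine gap in the uniformization step, which you yourself flag as the crux. The local Lipschitz constant your IFT expansion produces is, up to uniform factors, $\theta_x \sim \sup_{v\in\mathcal N_x,\,|v|=1}\bigl|\langle \Phi_1 v, X(\phi_1(x))\rangle\bigr|/|X(\phi_1(x))|^2 \lesssim 1/|X(\phi_1(x))|$, and this quantity blows up as $x\to\sing(X)$. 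Passing to the renormalized variable $u=(y-x)/|X(x)|$, as you propose, is exactly what Lemmas~\ref{Lem:section-size} and~\ref{Lem:Continuity} are built for, but what it delivers is a Lipschitz bound in $u$, namely $|t(y)-1|<\theta\,|u|=\theta\,d(x,y)/|X(x)|$ with uniform $\theta$. This is strictly weaker than the unscaled inequality $|t(y)-1|<\theta\,d(x,y)$ asserted in the statement: the two differ by a factor $1/|X(x)|$, and shrinking $\delta$ only shrinks the domain, it does not shrink a pointwise Lipschitz constant. You never explain how this discrepancy is bridged, so the claim that ``choosing $\delta$ small enough extracts the uniform $\theta$'' is not justified.

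In fact the unscaled bound appears not to hold. Take $X(z)=Az$ near a hyperbolic singularity with $A=\operatorname{diag}(1,-1)$, and let $x=(\epsilon,\epsilon)$; then $\mathcal N_x$ is spanned by $(1,1)$, and the first-order coefficient $\alpha(v)=\langle\Phi_1 v, X(\phi_1(x))\rangle/|X(\phi_1(x))|^2$ evaluated on the unit normal equals $\dfrac{e^2-e^{-2}}{\sqrt2\,\epsilon\,(e^2+e^{-2})}$, which diverges as $\epsilon\to0$. Hence $|t(y)-1|/d(x,y)$ is unbounded over $x\in M\setminus\sing(X)$, and no uniform $\theta$ can satisfy the inequality as written. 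The natural and provable conclusion is the scaled bound $|t(y)-1|<\theta\,d(x,y)/|X(x)|$ (equivalently $|t(y)-1|<\theta\delta$ on the scaled disk), which is also all that the application in Proposition~\ref{prop:LSC of entropy} actually uses. You should either prove that corrected inequality explicitly, or identify the extra structure that would make the unscaled bound true; as it stands, the decisive step of the argument does not go through.

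Two smaller points: Lemma~\ref{Lem:Continuity} controls the derivative of the Poincar\'e map $\mathcal P_{x,\phi_1(x)}$, not of the return-time function $t(\cdot)$ directly, so the transfer of uniform continuity from $D\mathcal P$ to $t$ needs to be spelled out rather than merely asserted. And the uniqueness claim should quantify the transversality in the scaled metric: the statement requires uniqueness among all $t\in(0,2)$, not just uniqueness in a small neighborhood of $t=1$, so one must rule out a second intersection of $\phi_{(0,2)}(y)$ with $N_{\phi_1(x)}(\delta C_1|X(\phi_1(x))|)$; this does follow from flow-box considerations for $\delta$ small, but it should be argued, not asserted.
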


\begin{definition}\label{Def:dominated-splitting}
	Given a vector field $X\in\mathscr{X}^1(M)$. Let $\Lambda$ be a $\phi_t$-invariant set with $\Lambda\cap\sing(X)=\emptyset$. We say $\Lambda$ admits a {\it dominated splitting} with respect to $\psi_t$,  if there is a $\psi_t$-invariant splitting $\mathcal{N}_{\Lambda}=E\oplus F$ and two constants $\eta>0$ and $T>0$, such that:
	$$\|\psi_T|_{E_x}\|\cdot \|\psi_{-T}|_{F_{\phi_T(x)}}\|\leq  e^{-\eta T}, \qquad\forall x\in\Lambda.$$
	To be precise, we also call this splitting an {\it $(\eta,T)$-dominated splitting}.
	
	Similarly, we say that a $\psi^*_t$-invariant splitting $\mathcal{N}_{\Lambda}=E\oplus F$ is  \emph{$(\eta,T)$-dominated} with respect to $\psi^*_t$ if $\psi_T$  in  the above inequality is replaced by $\psi_{T}^*$.
\end{definition}

\begin{remark}
	In general, $\Lambda$ is only $\phi_t$-invariant, but not compact. Moreover, recall that
	$$\psi^*_t(v)=\frac{|X(x)|}{|X(\phi_t(x))|}\psi_t(v)=\frac{1}{\|\Phi_t|_{\langle X(x)\rangle}\|}\psi_t(v),
	\qquad \forall x\in\Lambda, ~\forall v\in\mathcal{N}_x,$$
	we have that $\mathcal{N}_{\Lambda}=E\oplus F$ is an $(\eta,T)$-dominated splitting with respect to $\psi_t$ if and only if it is an $(\eta,T)$-dominated splitting with respect to $\psi^*_t$.
\end{remark}

Let $\mu$ be an ergodic measure of $X$. We say $\mu$ is {\em nontrivial} if it is not supported on a singularity or a periodic orbit.
A trivial ergodic measure $\mu$ is {\it hyperbolic} if its support is hyperbolic.
A nontrivial ergodic measure $\mu$ of $X$ is {\it hyperbolic} if it has a unique vanishing Lyapunov exponent which is associated to the flow direction $\langle X \rangle.$ The number of negative Lyapunov exponents of a hyperbolic ergodic measure $\mu$ is called the {\it index} of $\mu$ and denoted by $\ind(\mu)$.

It is proved in~\cite[Theorem 5.6]{sgw} that every ergodic measure of a star vector field is hyperbolic. The following lemma asserts the existence of dominated splitting on $\mathcal{N}_{\supp(\mu)\setminus\sing(X)}$ for any ergodic measure $\mu$ of a star vector field with respect to the scaled linear Poincar\'e flow.

\begin{lemma}\label{Lem:domination of SLPF}
	Given $X\in\mathscr{X}^*(M)$. Assume that $\mu$ is a non-trivial ergodic measure of $X$. Then the scaled linear Poincar\'e flow $\psi_t^*$ admits a dominated splitting $\mathcal{N}_{\supp(\mu)\setminus\sing(X)}=E\oplus F$ satisfying that $\dim(E)=\ind(\mu)$. Moreover, there exist two constants $\eta,T>0$, such that
	$$
	\int\log\|\psi^*_T|_{E_x}\|{\rm d}\mu(x)<-\eta, \qquad and \qquad
	\int\log\|\psi^*_{-T}|_{F_x}\|{\rm d}\mu(x)<-\eta.
	$$
\end{lemma}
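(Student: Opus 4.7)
The plan is to produce the splitting from Oseledets' theorem and then upgrade the resulting measurable splitting to a uniformly dominated continuous one on $\supp(\mu)\setminus\sing(X)$, with the star hypothesis entering at the upgrade step. By the result of \cite{sgw} recalled just before the lemma, $\mu$ is hyperbolic, so Oseledets' theorem applied to the cocycle $\psi^*_t$ on $\mathcal{N}$ yields, at $\mu$-a.e.\ $x$, an invariant splitting $\mathcal{N}_x = E(x)\oplus F(x)$ with $\dim E(x) = \ind(\mu)$, on which the Lyapunov exponents of $\psi^*_t$ are strictly negative on $E$ and strictly positive on $F$. Here I would invoke the standard fact that $\log|X|$ is $\mu$-integrable for any nontrivial ergodic measure of a star flow, so that the scalar cocycle $|X(x)|/|X(\phi_t(x))|$ has vanishing Lyapunov exponent and $\psi_t$, $\psi^*_t$ share their spectrum on $\mathcal{N}$.

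The main step is upgrading this a.e.\ Oseledets splitting to a uniformly $(\eta,T)$-dominated one. I would argue by contradiction: if no uniform pair $(\eta,T)$ worked, then one could select arbitrarily long regular orbit segments on which the separation between $E$ and $F$ under $\psi^*_t$ is too weak; Liao's selecting/shadowing lemma together with Ma\~n\'e's ergodic closing lemma (both of which underlie the theory of star flows as developed in \cite{gan-wen} and \cite{sgw}) would then produce periodic orbits of a $C^1$-small perturbation of $X$ whose scaled Poincar\'e return map has an eigenvalue arbitrarily close to the unit circle, contradicting the star hypothesis. This yields constants $\eta,T>0$ and a measurable invariant splitting that is $(\eta,T)$-dominated at $\mu$-a.e.\ point.

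Once uniform domination holds on a full-measure set, the standard cone-field argument extends the splitting to every point of $\supp(\mu)\setminus\sing(X)$: the invariant cone condition defining $(\eta,T)$-domination is closed under limits and forces uniqueness, so one obtains a continuous $(\eta,T)$-dominated splitting on the entire invariant set $\mathcal{N}_{\supp(\mu)\setminus\sing(X)}$ with $\dim E = \ind(\mu)$. The integral inequalities then follow from Kingman's subadditive ergodic theorem: $\tfrac{1}{nT}\log\|\psi^*_{nT}|_{E_x}\|$ converges $\mu$-a.e.\ and in $L^1$ to the top Lyapunov exponent of $\psi^*_t|_E$, which is strictly negative and equals $\inf_n \tfrac{1}{nT}\int \log\|\psi^*_{nT}|_{E_x}\|\,d\mu$; enlarging $T$ if necessary gives $\int \log\|\psi^*_T|_{E_x}\|\,d\mu < -\eta$ for some new $\eta>0$, and the symmetric argument applied to $\psi^*_{-T}|_F$ yields the second inequality.

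The main obstacle is the conversion of the measurable Oseledets splitting into a uniformly dominated one: since $\supp(\mu)\setminus\sing(X)$ need not be compact when $\mu$ puts no mass at but accumulates on singularities, a naive compactness argument fails, and the star hypothesis must be exploited through Liao's perturbation technology for the linear Poincar\'e flow and Ma\~n\'e's ergodic closing lemma to convert asymptotic hyperbolicity at $\mu$-a.e.\ point into genuine uniform domination on the whole invariant set.
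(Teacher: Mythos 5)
Your proposal takes a genuinely different route from the paper, but the central step is too sketchy to stand on its own. The paper's proof is short and direct: apply Ma\~n\'e's ergodic closing lemma to produce vector fields $X_n\to X$ and periodic orbits $\gamma_n\to\supp(\mu)$ of index $\ind(\mu)$, then invoke Corollary 2.11 of Gan--Yang (valid for flows far from homoclinic tangency, hence for star flows) to obtain a dominated splitting of index $\ind(\mu)$ for $\psi^*_t$ over $\supp(\mu)\setminus\sing(X)$; the integral estimates are then read off from the proof of Theorem 5.6 in~\cite{sgw}. You instead begin with Oseledets, attempt an upgrade to uniform domination by contradiction, extend by a cone argument, and finish with Kingman. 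Your final step (Kingman's theorem, the $L^1$ convergence of $\tfrac{1}{n}\log\|\psi^*_{nT}|_E\|$ to the negative top exponent, and then enlarging $T$) is correct and, if anything, more transparent than the paper's citation, and your observation that $\log|X|$ is $\mu$-integrable for star flows is exactly the hypothesis needed to compare the spectra of $\psi_t$ and $\psi^*_t$.

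The gap lies in the contradiction step. You assert that failure of uniform $(\eta,T)$-domination would, via Liao's selection/shadowing lemma and Ma\~n\'e's ergodic closing lemma, yield periodic orbits of $C^1$-nearby vector fields whose scaled Poincar\'e return map has an eigenvalue near the unit circle, contradicting the star hypothesis. That implication does not follow from shadowing alone. Shadowing a long orbit segment on which the domination estimate degenerates produces a periodic orbit near $\supp(\mu)$, but says nothing by itself about its eigenvalues. To manufacture a non-hyperbolic (or weakly hyperbolic) eigenvalue one must additionally perform a Franks-type perturbation of the linear Poincar\'e flow along the shadowed periodic orbit, rotating the almost-undominated bundles into one another while keeping the orbit periodic and the perturbation $C^1$-small; this is the essential technical content of the Ma\~n\'e--Liao--Gan-Wen--Gan-Yang line of results culminating in Gan--Yang's Corollary 2.11, not an automatic byproduct of shadowing. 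As written you are reasserting the conclusion of that corollary without carrying out the argument. Either invoke Gan--Yang's Corollary 2.11 directly, as the paper does (after the ergodic closing lemma step you already have), or supply the Franks-type perturbation lemma for the linear Poincar\'e flow explicitly. A secondary, minor point: the cone-field extension of a uniformly dominated splitting from a $\mu$-full invariant set to all of $\supp(\mu)\setminus\sing(X)$ is standard, but since this set is noncompact it deserves the remark that uniform $(\eta,T)$-domination is a closed condition along the linear Poincar\'e flow, hence passes to the closure inside $M\setminus\sing(X)$.
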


\begin{proof}
	By applying the ergodic closing lemma \cite{mane}, there exists a sequence of vector fields $X_n\rightarrow X$, and periodic orbits $\gamma_n$ of $X_n$ with same index of $\mu$, such that $\gamma_n\rightarrow\supp(\mu)$ in the Hausdorff topology. Recall that a star vector field is far from homoclinic tangency, then from \cite[Corollary 2.11]{gan-yang}, the scaled linear Poincar\'e flow $\psi_t^*$ admits a dominated splitting $\mathcal{N}_{\supp(\mu)\setminus\sing(X)}=E\oplus F$ satisfying that $\dim(E)=\ind(\mu)$.
	
	Finally, the estimations of integrals of $\log\|\psi^*_T|_{E_x}\|$ and $\log\|\psi^*_{-T}|_{F_x}\|$ on $\mu$ have been showed in the proof of \cite[Theorem 5.6]{sgw}.
\end{proof}



\begin{definition}\label{Def:*-contracting}
	Given $X\in\mathscr{X}^1(M)$ and constants $C,\eta,T>0$. Let $\Lambda$ be a $\phi_t$-invariant set and $E\subset\mathcal{N}_{\Lambda\setminus\sing(X)}$ be an invariant subbundle of the linear Poincar\'e flow $\psi_t$. A point $x\in\Lambda\setminus\sing(X)$ is called $(C,\eta,T,E)$-$\psi^*_t$-contracting if there exists a partition:
	$$
	0=t_0<t_1<\cdots<t_n<\cdots, \qquad {\rm with}~ t_{n+1}-t_n\leq T, ~\forall n\in\mathbb{N},
	$$
	and $t_n\rightarrow\infty$ as $n\rightarrow\infty$, such that for any $n\in\mathbb{N}$,
	$$
	\prod_{i=0}^{n-1}\|\psi^*_{t_{i+1}-t_i}|_{E_{\phi_{t_i}(x)}}\|\leq C\cdot e^{-\eta t_n}.
	$$
	A point $x\in\Lambda\setminus\sing(X)$ is called $(C,\eta,T,E)$-$\psi^*_t$-expanding if it is $(C,\eta,T,E)$-$\psi^*_t$-contracting for $-X$.
\end{definition}

Due to the loss flow speed, even when the Poincar\'e flow is uniformly contracting, we could not have uniform size of stable manifolds if the orbits are approaching to singularities. However, the new theorem was proved in the pioneering work of Liao \cite{liao-shadowing2}, which allowed us to get the uniform size of stable manifolds up to flow speed. We recommend \cite[Section 2.5]{gan-yang} for a detailed and beautiful explanation of this theorem.

\begin{theorem}\label{Thm:Stable-Mfd}
	Let $X\in\mathscr{X}^1(M)$ and $\Lambda$ be a compact invariant set. Given constants $C,\eta,T>0$, assume that $\mathcal{N}_{\Lambda\setminus\sing(X)}=E\oplus F$ is an $(\eta,T)$-dominated splitting with respect to the linear Poincar\'e flow $\psi_t$. Then for any $\epsilon>0$, there exists $\delta>0$, such that if $x$ is $(C,\eta,T,E)$-$\psi^*_t$-contracting, then there exists a $C^1$-map $\kappa:E_x(\delta|X(x)|)\rightarrow\mathcal{N}_x$ satisfying that
	\begin{itemize}
		\item $d_{C^1}(\kappa,id)<\epsilon$;
		\item $\kappa(0)=0$, and  ${\rm Image}(\kappa)$ is tangent to $E_{x}$ at $0$;
		\item $W^{cs}_{\delta|X(x)|}(x)\subset W^s(\orb(x))$, where $W^{cs}_{\delta|X(x)|}(x)=\exp_x({\rm Image}(\kappa))$.
	\end{itemize}
	Moreover, if $0=t_0<t_1<\cdots<t_n<\cdots$ is the partition of $x$ for $(C,\eta,T,E)$-$\psi^*_t$-contraction, then there exists a constant $C'>0$, such that for any $n\in\mathbb{N}$, denote the constant $\delta_n=C'e^{-\frac{\eta n}{2}}\delta$, 
	there exists a $C^1$-map
	$$
	\kappa_n:E_{\phi_{t_n}(x)}(\delta_n|X(\phi_{t_n}(x))|)\rightarrow\mathcal{N}_{\phi_{t_n}(x)},
	$$
	which satisfies
	\begin{itemize}
		\item $d_{C^1}(\kappa_n,id)<\epsilon$;
		\item $\kappa_n(0)=0$, and ${\rm Image}(\kappa_n)$ is tangent to $E_{\phi_{t_n}(x)}$ at $0$;
		\item denoted $W^{cs}_{\delta_n|X(x)|}(\phi_{t_n}(x))=\exp_{\phi_{t_n}(x)}({\rm Image}(\kappa_n))$, one has
		$$
		{\cal P}_{x,\phi_{t_n}(x)}(W^{cs}_{\delta|X(x)|}(x))\subset W^{cs}_{\delta_n|X(\phi_{t_n}(x))|}(\phi_{t_n}(x)).
		$$
	\end{itemize}
\end{theorem}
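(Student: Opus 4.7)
The plan is to rescale all the normal sections by the flow speed and then run a classical Hadamard--Perron graph transform in the resulting sequence of scaled Poincar\'e return maps. This is precisely the payoff of passing from $\psi_t$ to $\psi_t^{*}$: rescaling the transverse disk $N_y(\delta|X(y)|)$ by the factor $1/|X(y)|$ sends the derivative of the Poincar\'e map to $\psi_t^{*}$ and removes the blow-up caused by orbits approaching $\sing(X)$. First, fix the partition $0=t_0<t_1<\cdots$ witnessing $(C,\eta,T,E)$-$\psi_t^{*}$-contraction at $x$, write $x_n=\phi_{t_n}(x)$, and observe $t_{i+1}-t_i\le T$. Lemma~\ref{Lem:section-size} then gives a single $\delta>0$ for which each Poincar\'e map $\mathcal{P}_i:=\mathcal{P}_{x_i,x_{i+1}}$ is well-defined on $N_{x_i}(\delta|X(x_i)|)$ with image inside $N_{x_{i+1}}(\delta C_T|X(x_{i+1})|)$. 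Pulling each section back to $\mathcal{N}_{x_i}$ by $\exp_{x_i}$ and rescaling by $1/|X(x_i)|$ yields maps
$$
\widehat{\mathcal{P}}_i : \mathcal{N}_{x_i}(\delta) \longrightarrow \mathcal{N}_{x_{i+1}}(\delta C_T)
$$
between balls of \emph{fixed} radius in $\mathcal{N}_{x_i}=E_{x_i}\oplus F_{x_i}$, with $D_0\widehat{\mathcal{P}}_i=\psi^{*}_{t_{i+1}-t_i}$; Lemma~\ref{Lem:Continuity} guarantees that these are uniformly $C^1$-close to their linear parts over $i\in\mathbb{N}$.

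Next, I run a graph transform on the space of Lipschitz sections $\gamma:E_x(\delta)\to F_x(\delta)$ with $\gamma(0)=0$ and small Lipschitz constant. The $(\eta,T)$-domination of $E\oplus F$ (equivalent for $\psi_t$ and $\psi_t^{*}$) provides an invariant cone field around $E$ for the linear parts, and by choosing $\delta$ small and using the uniform $C^1$-continuity above, the same cone field is invariant under the nonlinear pullback by each $\widehat{\mathcal{P}}_i$. The contracting hypothesis $\prod_{i=0}^{n-1}\|\psi^{*}_{t_{i+1}-t_i}|_{E_{x_i}}\|\le Ce^{-\eta t_n}$ yields exponential contraction along $E$, so a standard Hadamard--Perron fixed-point argument produces a unique invariant $C^1$ graph $\kappa:E_x(\delta)\to\mathcal{N}_x$ with $\kappa(0)=0$, $\mathrm{Image}(\kappa)$ tangent to $E_x$ at $0$, and $d_{C^1}(\kappa,\id)<\epsilon$ when $\delta$ is chosen small. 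Undoing the rescaling gives $W^{cs}_{\delta|X(x)|}(x)$, and the exponential contraction along the partition forces consecutive Poincar\'e iterates of this disk to shrink to a point at rate $Ce^{-\eta t_n}$, so $W^{cs}_{\delta|X(x)|}(x)\subset W^s(\orb(x))$.

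For the iterated statement at $x_n$, one shifts the base point and uses the tail partition $t_n<t_{n+1}<\cdots$, which is itself a $(Ce^{-\eta t_n},\eta,T,E)$-$\psi_t^{*}$-contracting sequence starting at $x_n$. The admissible disk radius at step $n$ has to shrink enough that the image of the initial graph $\kappa$ lands inside it after $n$ Poincar\'e steps, \emph{and} enough for the graph transform at $x_n$ to still converge. Splitting the contraction budget in two and keeping half in reserve for the nonlinear error, one may take $\delta_n=C'e^{-\eta n/2}\delta$; the push-forward of $\kappa$ by $\widehat{\mathcal{P}}_{n-1}\circ\cdots\circ\widehat{\mathcal{P}}_0$ is then still a graph over $E_{x_n}(\delta_n)$, which after rescaling back by $|X(x_n)|$ produces the desired $\kappa_n$ on $E_{x_n}(\delta_n|X(x_n)|)$ and the inclusion ${\cal P}_{x,x_n}(W^{cs}_{\delta|X(x)|}(x))\subset W^{cs}_{\delta_n|X(x_n)|}(x_n)$.

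The main obstacle is the rescaling step itself: one has to verify that after normalising each $N_{x_i}$ by $|X(x_i)|$, the family $\{\widehat{\mathcal{P}}_i\}_{i\in\mathbb{N}}$ is \emph{uniformly} $C^1$-close to its linearisations $\psi^{*}_{t_{i+1}-t_i}$ on a ball of \emph{definite} radius, even though the physical sections $N_{x_i}$ may have wildly different radii in $M$. This uniformity, which is what lets cone fields be preserved by the nonlinear maps and makes a classical hyperbolic-type graph transform applicable, is exactly the content of Liao's scaling estimates \cite{liao-shadowing2} recorded in Lemmas~\ref{Lem:section-size}--\ref{Lem:Time-control}; without them there is no hope of a flow-speed-sized stable manifold near singularities.
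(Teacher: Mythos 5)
The paper does not prove this theorem in-house: the remark immediately following the statement attributes the first part to Liao \cite{liao-shadowing2} and the second part to \cite[Section 2]{gan-yang}, with the exponential contracting estimates credited to \cite{HPS,Wen}. So there is no in-paper argument to compare yours against, and the appropriate check is against the strategy of those references. Your sketch reconstructs that strategy faithfully: rescaling each transverse section $N_{x_i}$ by $1/|X(x_i)|$ turns the Poincar\'e returns into a family of maps on balls of \emph{definite} radius, with derivative at the origin equal to $\psi^*_{t_{i+1}-t_i}$, and Lemmas~\ref{Lem:section-size}--\ref{Lem:Continuity} are exactly the inputs making this family uniformly $C^1$-close to its linearisations. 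From there domination gives the invariant cone field, the $(C,\eta,T,E)$-hypothesis gives exponential contraction along $E$, and a Hadamard--Perron graph transform produces $\kappa$. You have correctly identified the uniformity after rescaling as the one genuinely nonstandard ingredient of Liao's theorem.

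One point in the second half of your sketch is incorrect and would lead a reader astray. You claim the tail partition witnesses that $x_n$ is $(Ce^{-\eta t_n},\eta,T,E)$-$\psi^*_t$-contracting. Writing $\prod_{i=n}^{m-1}\|\psi^*_{t_{i+1}-t_i}|_E\|$ as a quotient of partial products requires a \emph{lower} bound on $\prod_{i=0}^{n-1}\|\psi^*_{t_{i+1}-t_i}|_E\|$, and the best uniform such bound is of order $c_0^n$ with $c_0=\inf_{s\in(0,T]} m(\psi^*_s)\in(0,1]$; the tail constant is thus of order $Ce^{-\eta t_n}c_0^{-n}$, which typically \emph{degrades} as $n$ grows. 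It is this degradation, not the improvement your claimed constant would suggest, that forces the admissible disk radius $\delta_n$ to shrink geometrically in $n$. If the tail contraction really improved as $e^{-\eta t_n}$, one could keep $\delta_n$ bounded below and the factor $e^{-\eta n/2}$ in the statement would be pointless. Separately, the final step from ``rescaled Poincar\'e iterates of the disk shrink'' to $W^{cs}_{\delta|X(x)|}(x)\subset W^s(\orb(x))$ is a statement about the continuous-time flow and needs the return-time control of Lemma~\ref{Lem:Time-control} together with the boundedness of $|X|$ to interpolate between sections; your sketch elides this. Both details are carried out carefully in \cite{liao-shadowing2} and \cite{gan-yang}.
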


\begin{remark*}
	The first part of Theorem~\ref{Thm:Stable-Mfd} is proved in \cite{liao-shadowing2}. The second part has been showed in \cite[Section 2]{gan-yang} with the exponenetial contracting of stable manifolds (\cite{HPS,Wen}) up to flow speed.
\end{remark*}

\bigskip

\subsection{Horseshoes with large entropy}
In this section, we prove that for any nontrivial ergodic measure of a star flow we could obtain a horseshoe whose entropy approximates its metric entropy. The main idea originates from \cite{Katok2} for $C^r(r>1)$ surface diffeomorphisms, but we need to conquer the difficulty caused by singularities.
A similar result can be found in~\cite{yang} with the assumption that the tangent flow $\Phi_t$ admits a dominated splitting.

\begin{proposition}\label{prop:LSC of entropy}
	Let $\phi_t$ be a star flow generated by $X\in\mathscr{X}^*(M)$ and $\mu$ be an ergodic measure satisfying that $h_{\mu}(X)>0$.
	Then for any $\varepsilon>0$, there is a hyperbolic horseshoe $\Lambda_{\varepsilon}$ of $\phi_t$ such that $h_{\topo}(X,\Lambda_{\varepsilon})>h_{\mu}(X)-\varepsilon$.
\end{proposition}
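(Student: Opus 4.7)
The plan is to adapt Katok's classical horseshoe construction to the singular-flow setting, using the scaled linear Poincar\'e flow and Liao's stable manifold theorem (Theorem~\ref{Thm:Stable-Mfd}) in place of Pesin theory.

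First I would invoke Lemma~\ref{Lem:domination of SLPF} to obtain a dominated splitting $\mathcal{N}_{\supp(\mu)\setminus\sing(X)}=E\oplus F$ for $\psi^*_t$, together with constants $\eta,T>0$ such that $\int\log\|\psi^*_T|_E\|\,d\mu<-\eta$ and $\int\log\|\psi^*_{-T}|_F\|\,d\mu<-\eta$. By Birkhoff's ergodic theorem combined with a standard Pliss-type argument along the discrete time-$T$ orbit, I would extract a compact set $K\subset\supp(\mu)\setminus\sing(X)$ with $\mu(K)>1-\delta$ whose points are simultaneously $(C,\eta/2,T,E)$-$\psi^*_t$-contracting in forward time and $(C,\eta/2,T,F)$-$\psi^*_t$-expanding in backward time, with uniform constants. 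Since $\mu$ is non-trivial, $K$ may be arranged to lie at positive distance from $\sing(X)$, so that the flow speed $|X(\cdot)|$ is bounded below by some $b>0$ on $K$.

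Second, I would apply the Katok--Brin entropy formula: for any small $\rho>0$ there is a sequence $t_n\to\infty$ and $(t_n,\rho)$-separated subsets $S_n\subset K$ of cardinality at least $\exp\bigl(t_n(h_\mu(X)-\varepsilon/4)\bigr)$ whose orbit segments return to $K$ at times arbitrarily close to $t_n$. Passing to a cross section $\Sigma$ transverse to $X$ (built from the normal manifolds $N_x(\delta_T|X(x)|)$ of Lemma~\ref{Lem:section-size}), the return map is a piecewise $C^1$-diffeomorphism on a compact set, and Lemma~\ref{Lem:Time-control} gives uniform control of return times. Thus the problem reduces to a Katok-type statement for the discrete Poincar\'e return map with uniform hyperbolic constants.

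Third, to upgrade the separated set to a horseshoe, I would use Theorem~\ref{Thm:Stable-Mfd} at each $x\in S_n$ to attach a center-stable disc $W^{cs}_{\delta|X(x)|}(x)$ of dimension $\dim E$, and, symmetrically, a center-unstable disc of dimension $\dim F+1$; the flow-direction parts cancel upon intersecting with $\Sigma$ and yield stable/unstable discs in the section of size bounded below by $\delta b$. The uniform continuity statement in Lemma~\ref{Lem:Continuity} lets me treat the Poincar\'e holonomies as a family of uniformly hyperbolic maps on $\Sigma$. A standard Smale-like argument (pigeonhole on the location of the endpoints of orbit segments in finitely many $\rho$-balls on $\Sigma$, then Markov graph construction and shadowing) extracts a subcollection of cardinality at least $\exp\bigl(t_n(h_\mu(X)-\varepsilon)\bigr)$ whose orbits form a uniformly hyperbolic compact invariant set; its saturation under $\phi_t$ is the desired horseshoe $\Lambda_\varepsilon$, topologically conjugate to a suspension over a shift of finite type, and its topological entropy is at least $h_\mu(X)-\varepsilon$.

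The hardest step is controlling the manifolds along long orbits when the flow speed $|X(\phi_t(x))|$ oscillates: Theorem~\ref{Thm:Stable-Mfd} provides discs whose size shrinks with the flow speed, so the shadowing and transversality arguments must be performed \emph{scale by scale}. This is precisely where the uniform lower bound $|X|\geq b$ on $K$ is essential, together with the rescaling factor $\|\Phi_t|_{\langle X\rangle}\|$ linking $\psi_t$ to $\psi_t^*$: it converts the non-uniform contraction of $\psi_t$ into the uniform contraction of $\psi_t^*$ that the separated set inherits from $\mu$, and ensures the graph-transform estimates close up independently of the location of the orbit segment in $M$.
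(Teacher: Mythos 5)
Your outline matches the paper's own proof in all essential respects: both adapt Katok's horseshoe construction to singular flows by replacing Pesin theory with the scaled linear Poincar\'e flow $\psi^*_t$, Lemma~\ref{Lem:domination of SLPF}, and Liao's stable manifold theorem (Theorem~\ref{Thm:Stable-Mfd}), and both localize the separated set at positive distance from $\sing(X)$ in a flow box near a fixed reference point and use Lemma~\ref{Lem:Time-control} to control return times in the final entropy estimate. The paper's execution differs only cosmetically from yours: it obtains the good set $\Delta_C$ directly as a Pesin-type block (points satisfying the cocycle bounds from time $0$ with a uniform constant, which has measure close to $1$ by Birkhoff) rather than via a Pliss selection, and it follows Katok's lemma literally to place the separated points and their returns in a single small flow box $V_z(\delta/2)\cap\Delta_C$, so that the horseshoe is built directly from crossings of Poincar\'e-return images and preimages with no pigeonhole or shadowing step needed.
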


\begin{remark*}
    The idea of proof is the same with Katok's argument in \cite{Katok2} and \cite{KM}. The main difficulty arises from vanishing of flow speed when the orbit approaches singularities, and we lose the influence from hyperbolicity of typical points of the hyperbolic measure. However, due to the key observation of Liao, the hyperbolicity still holds in a flow speed size neighborhood of a typical point.
\end{remark*}

\begin{proof}
	Except a countable many $T\in\mathbb{R}$, the measure $\mu$ is ergodic with respect to the time-$T$ map of the flow \cite{pugh-shub}. Without loss of generality, we assume that $\mu$ is an ergodic measure for the time-1 map $\phi_1$ and the general case is identical. Note that $\mu$ is not supported on singularities since $h_{\mu}(X)>0$.
	
	By Lemma \ref{Lem:domination of SLPF} (we assume $T=1$ for simplicity and the general case is identical), there exists a constant $\eta>0$ such that
	$$
	\int\log\|\psi^*_1|_{E_x}\|{\rm d}\mu(x)<-\eta, \qquad {\rm and}\qquad
	\int\log\|\psi^*_{-1}|_{F_x}\|{\rm d}\mu(x)<-\eta.
	$$
	By Birkhoff ergodic theorem, for $\mu$-almost every $x\in M$ we have
	$$
	\lim_{k\rightarrow\infty}\frac{1}{k}\sum_{i=0}^{k-1}\log\|\psi^*_1|_{E_{\phi_i(x)}}\|=\int\log\|\psi^*_1|_{E_x}\|{\rm d}\mu(x)<-\eta,
	$$
	$$
	\lim_{k\rightarrow\infty}\frac{1}{k}\sum_{i=0}^{k-1}\log\|\psi^*_{-1}|_{F_{\phi_{-i}(x)}}\|=\int\log\|\psi^*_{-1}|_{F_x}\|{\rm d}\mu(x)<-\eta.
	$$
	
	Just as in the Pesin theory, for every $C>0$, let $\Lambda_C\subset\supp(\mu)$ be the set of points in $\supp(\mu)$ that are both $(C,\eta,1,E)$-$\psi^*_t$-contracting and $(C,\eta,1,F)$-$\psi^*_t$-expanding. That is for any $x\in\Lambda_C$ and any $n>0$, one has	
	$$
	\prod_{i=0}^{k-1}\|\psi^*_1|_{E_{\phi_i(x)}}\|\leq Ce^{-k\eta} \qquad{\rm and}\qquad
	\prod_{i=0}^{k-1}\|\psi^*_{-1}|_{F_{\phi_{-i}(x)}}\|\leq Ce^{-k\eta}.
	$$
	Note that $\mu(\Lambda_C)\rightarrow 1$ as $C\rightarrow+\infty.$ We choose $C$ large enough such that $\mu(\Lambda_C)>0.$
	Since $\mu(\sing(X))=0$, there exists $\delta_0>0$ such that  $\Delta_C:=\Lambda_C\setminus B_{\delta_0}(\sing(X))$ has positive $\mu$-measure, where $B_{\delta_0}(\sing(X))$ denotes the $\delta_0$-neighborhood of $\sing(X).$
	
	
	
	Now we fix a point $z\in\supp(\mu|_{\Delta_C})$, and take $\delta$ satisfying
	$$
	0<\delta\ll\min\{\frac{\delta_1|X(z)|}{1000},~\frac{\delta_0|X(z)|}{1000},
	~\frac{\varepsilon}{2\theta(h_{\mu}(\phi_t)-\varepsilon)}\},
	$$
	where $\delta_1$ is the constant in Lemma \ref{Lem:section-size}, \ref{Lem:Continuity}, \ref{Lem:Time-control} for $T=1$, and $\theta$ is the constant in Lemma \ref{Lem:Time-control}.
	
	We consider the local cross section
	$$
	N_z(\delta)=\exp_z(E_z(\delta)\times F_z(\delta)) \qquad {\rm and}\qquad V_z(\delta)=\phi_{(-\delta,\delta)}(N_z(\delta))=\bigcup_{t\in(-\delta,\delta)}\phi_t(N_z(\delta)).
	$$
    Similarly we define $N_z(\delta/2)$ and $V_z(\delta/2)$.
	Then we have $\mu(V_z(\delta)\cap\Delta_C)>0$.
	
	Following the classical argument of Katok \cite[Theorem 4.3]{Katok2}, 
	for $l,\beta>0$ and $n\in\mathbb{N}$, there exists a finite set $K_n(\beta,l)$ (the $(n,\frac{1}{l})$-separated set) satisfying the following:

	\begin{itemize}
		\item[--] $K_n(\beta,l)\subset V_z(\delta/2)\cap\Delta_C$;
		\item[--] $d_n(x,y)=\max_{i=0}^{n-1}\{d(\phi_i(x),\phi_i(y))\}>1/l \text{ for any } x, y\in K_n(\beta,l) \text{ with }x\neq y$;
		\item[--] for every $x\in K_n(\beta,l)$, there exists an integer $m_x$ with $n\leq m_x< (1+\beta)n$ such that
		$$
		\phi_{m_x}(x)\in V_z(\delta/2)\cap\Delta_C, \qquad {\rm and} \qquad d(x,\phi_{m_x}(x))<\frac{1}{1000l};
		$$
		\item[--] the following estimation satisfies
		$$
		\lim_{l\rightarrow\infty}\liminf_{n\rightarrow\infty}\frac{1}{n}\log\#K_n(\beta,l)\geq h_{\mu}(X)-\beta.
		$$
	\end{itemize}
	Since $V_z(\delta/2)$ is the flow box of $N_z(\delta/2)$ with time in $(-\delta/2,\delta/2)$, we can assume $K_n(\beta,l)\subset N_z(\delta/2)$. 
	For every $x\in K_n(\beta,l)$, since $\phi_{m_x}(x)\in V_z(\delta/2)$, there exists $\tau_x\in(-\frac{\delta}{2},\frac{\delta}{2})$ such that
	$$
	\phi_{m_x+\tau_x}(x)\in N_z(\delta/2).
	$$
	This allows us to define the local Poincar\'e return map from a neighborhood of $x$ in $N_z(\delta)$ to a neighborhood of $\phi_{m_x+\tau_x}(x)$ in $N_z(\delta)$. For simplicity of symbols, we still use ${\cal P}_{x,\phi_{m_x+\tau_x}(x)}$ to denote this local Poincar\'e return map. Recall that $d(x,\phi_{m_x}(x))<\frac{1}{1000l}$, hence shrinking $\delta$ if necessary, we have 
	$$
	d(x,\phi_{m_x+\tau_x}(x))<\frac{2}{1000l}.
	$$
	Moreover, for $l$ sufficiently large, there exist infinitely many $n$ satisfying
	$$
	\#K_n(\beta,l)>\exp(n(h_{\mu}(X)-2\beta)).
	$$
	
	Recall that $x\in K_n(\beta,l)\subset\Delta_C$ is $(C,\eta,1,E)$-$\psi^*_t$-contracting and $\phi_{m_x}(x)\in \Delta_C\subset \Lambda_C$ is $(C,\eta,1,F)$-$\psi^*_t$-expanding. Theorem \ref{Thm:Stable-Mfd} implies there exists $C'>0$ independent on $n$ such that
	\begin{itemize}
		\item[--] the diameter of ${\cal P}_{x,\phi_{m_x+\tau_x}(x)}(W^s_{loc}(\orb(x))\cap N_z(\delta))$ is smaller than $10C'e^\frac{-\eta n}{2}\delta$, and
		$$
		{\cal P}_{x,\phi_{m_x+\tau_x}(x)}(W^s_{loc}(\orb(x))\cap N_z(\delta))\subset W^s_{loc}(\orb(\phi_{m_x+\tau_x}(x)))\cap N_z(\delta);
		$$
		\item[--] the diameter of ${\cal P}^{-1}_{x,\phi_{m_x+\tau_x}(x)}(W^u_{loc}(\orb(\phi_{m_x+\tau_x}(x)))\cap N_z(\delta))$ is smaller than $10C'e^\frac{-\eta n}{2}\delta$, and
		$$
		{\cal P}^{-1}_{x,\phi_{m_x+\tau_x}(x)}(W^u_{loc}(\orb(\phi_{m_x+\tau_x}(x)))\cap N_z(\delta))\subset W^u_{loc}(\orb(x))\cap N_z(\delta).
		$$
	\end{itemize}
	Notice here the sizes of stable and unstable manifolds of $x$ and $\phi_{m_x+\tau_x}(x)$ do not need to time the flow speed, because they are both close to $z$ and the constant $\delta$ was chosen to be very small comparing with $|X(z)|$.
	
	Taking $n$ large enough, we have $10C'e^\frac{-\eta n}{2}\delta\ll\delta$. So for every point $w\in W^s_{loc}(\orb(x))\cap N_z(\delta)$ and for any disk $D^{cu}(w)\subset N_z(\delta)$ centered at $w$ tangent to the small cone field of the continuation of $F$-bundle, the connected component of ${\cal P}_{x,\phi_{m_x+\tau_x}(x)}(D^{cu}(w))\subset N_z(\delta)$ containing ${\cal P}_{x,\phi_{m_x+\tau_x}(x)}(w)$ is still tangent to a small cone field of the continuation of $F$-bundle. The same fact holds for points in $W^u_{loc}(\orb(\phi_{m_x+\tau_x}(x)))\cap N_z(\delta)$, the bundle $E$ and ${\cal P}^{-1}_{x,\phi_{m_x+\tau_x}(x)}$.
	
	This implies for any $x,y\in K_n(\beta,l)$, the connected component of ${\cal P}_{x,\phi_{m_x+\tau_x}(x)}(N_z(\delta))$ containing $\phi_{m_x+\tau_x}(x)$ is crossing the connected component of ${\cal P}^{-1}_{y,\phi_{m_y+\tau_y}(y)}(N_z(\delta))$ containing $y$.

	\begin{claim*}
		For $n$ large enough, the connected component ${\cal P}^{-1}_{x,\phi_{m_x+\tau_x}(x)}(N_z(\delta))$ containing $x$ does not contain any other point $y\in K_n(\beta,l)$.
	\end{claim*}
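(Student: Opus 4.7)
The approach is to argue by contradiction: suppose there exists $y\in K_n(\beta,l)\cap U_x$ with $y\neq x$, where $U_x$ denotes the connected component in the claim statement. I plan to show that the Bowen $d_n$-diameter of $U_x$ is at most $1/l$ for $n$ large, contradicting the $(n,1/l)$-separating property of $K_n(\beta,l)$ and thereby forcing $y=x$. Geometrically, $U_x$ is a thin $F$-slab: identifying $N_z(\delta)$ with $\mathcal{N}_z(\delta)$ via $\exp_z$ and writing $\mathcal{N}_z=E_z\oplus F_z$, the preimage diameter estimate on the local unstable disk established immediately above the claim gives $|\eta_w-\eta_x|\leq 10C'e^{-\eta n/2}\delta$ for every $w\in U_x$ (with $\exp_z^{-1}(w)=\xi_w+\eta_w\in E_z\oplus F_z$), while only the trivial bound $|\xi_w-\xi_x|\leq \delta$ is available.

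The heart of the argument is a shadowing estimate: for every $w\in U_x$ and $i\in[0,n-1]$, I aim to bound $d(\phi_i(x),\phi_i(w))\leq K\delta$ for some constant $K$ independent of $n$ and $w$. Writing $v_0:=(\xi_w-\xi_x)+(\eta_w-\eta_x)=:v_0^E+v_0^F$, the scaling identity $\psi_i=(|X(\phi_i(x))|/|X(x)|)\psi_i^*$ reduces everything to $\psi_t^*$-estimates. The forward $(C,\eta,1,E)$-$\psi_t^*$-contracting property at $x$ yields $|\psi_i^*(v_0^E)|\leq C\delta e^{-\eta i}$. Applying the backward contraction of $\psi_t^*$ on $F$ (equivalently the $(C,\eta,1,F)$-$\psi_t^*$-expanding property at $x$) to the image $\psi_{m_x}^*(v_0^F)$, whose size is at most $C_0\delta$ because it corresponds to a point in $N_z(\delta)$ and the ratio $|X(x)|/|X(\phi_{m_x}(x))|$ is of order $1$, gives $|\psi_i^*(v_0^F)|\leq C\delta e^{-\eta(m_x-i)}$. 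Both $|X(x)|$ and $|X(\phi_{m_x}(x))|$ are bounded below by a positive constant $c_z$ comparable to $|X(z)|/2$, since $x,\phi_{m_x+\tau_x}(x)\in N_z(\delta/2)$ and $\delta\ll|X(z)|$; meanwhile $|X(\phi_i(x))|\leq\|X\|_\infty$ uniformly. Converting these $\psi^*$-deviations together with the bounded time-reparametrization controlled by Lemma~\ref{Lem:Time-control} into Riemannian distance on $M$ then delivers the required bound $d(\phi_i(x),\phi_i(w))\leq K\delta$ uniformly in $i\in[0,n-1]$.

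To finish, I plan to revisit the choice of $\delta$ at the beginning of the proof of Proposition~\ref{prop:LSC of entropy} and impose the additional smallness $K\delta<1/l$. Since the lower threshold for $l$ in Katok's entropy estimate depends only on $\beta,\mu,\eta,C$ (and not on $\delta$), one can fix $l$ first and then shrink $\delta$ correspondingly, compatibly with the other smallness requirements already listed. With this choice the Bowen $d_n$-diameter of $U_x$ is at most $K\delta<1/l$ for every sufficiently large $n$, so no two distinct points of $K_n(\beta,l)$ can both lie in $U_x$. The main obstacle is precisely the shadowing step: clean hyperbolicity estimates for $\psi_t^*$ must be translated into Riemannian distances in $M$ despite the orbit $\phi_i(x)$ possibly approaching $\sing(X)$ during $[0,m_x]$. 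Liao's scaled Poincar\'e flow is tailored precisely for this, since the Riemannian deviation equals $(|X(\phi_i(x))|/|X(x)|)$ times the $\psi^*$-deviation and the uniform bound $|X|\leq\|X\|_\infty$ ensures that excursions of the orbit near singularities actually shrink the Riemannian distance, so the estimate survives without degradation.
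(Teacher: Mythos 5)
Your argument is correct in outline, but it takes a genuinely different route from the paper's, and the difference matters for how the parameters are organized. The paper never needs $\delta$ to be small compared to $1/l$. Instead, it exploits the recurrence condition built into $K_n(\beta,l)$: both $d(x,\phi_{m_x}(x))<\tfrac{1}{1000l}$ and $d(y,\phi_{m_y}(y))<\tfrac{1}{1000l}$ hold, and the exponentially thin cross-sectional width of the connected component at time $0$ (in the $F$-direction) and at time $m_x$ (in the $E$-direction) is $\lesssim e^{-\eta n/2}\delta<\tfrac{1}{1000l}$ once $n$ is large. A triangle inequality then gives the \emph{endpoint} bounds $d(x,y)<\tfrac{1}{100l}$ and $d(\phi_{m_x+\tau_x}(x),\phi_{m_y+\tau_y}(y))<\tfrac{1}{100l}$, and the domination/cone-field estimate then pushes these into $d_n(x,y)<\tfrac{1}{10l}$ for the intermediate times, with no constraint tying $\delta$ to $l$.

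Your version instead bounds the endpoint deviations by the trivial estimate $\lesssim\delta$ (both points sit in $N_z(\delta)$), propagates them via the scaled linear Poincar\'e flow exactly as the paper does, and obtains $d_n(x,w)\leq K\delta$; this is correct, but to conclude you must impose $K\delta<1/l$, which forces you to reverse the order of choice (fix $l$, then shrink $\delta$, then rerun Katok's construction in the smaller flow box $V_z(\delta/2)$). This reordering is legitimate — Katok's threshold for $l$ is governed by the partition and not by the reference box, and all the smallness conditions already placed on $\delta$ are one-sided upper bounds — but it is an extra layer of bookkeeping that the paper's argument avoids. Both proofs use the same underlying machinery: the $(C,\eta,1,E)$-$\psi_t^*$-contraction at $x$ forwards and the $(C,\eta,1,F)$-$\psi_t^*$-contraction at $\phi_{m_x}(x)$ backwards, the rescaling factor $|X(\phi_i(x))|/|X(x)|$ bounded by $\|X\|_\infty/c_z$ to translate $\psi^*$-estimates into Riemannian distance (your observation that proximity to singularities only shrinks the distance is exactly the paper's intended point), and Lemma~\ref{Lem:Time-control} to tame the time reparametrization. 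So the argument you give is valid; the paper's is more economical because it uses the recurrence data already encoded in $K_n(\beta,l)$ instead of buying the conclusion with an additional smallness hypothesis on $\delta$.
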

	
	\begin{proof}[Proof of the claim]
		
		Assume that there exists $y\neq x$ with $y\in K_n(\beta,l)\cap{\cal P}^{-1}_{x,\phi_{m_x+\tau_x}(x)}(N_z(\delta))$. We show that $d_n(x,y)$ is smaller than $1/l$ which leads to a contradiction.
		
		Since $y\in K_n(\beta,l)\cap{\cal P}^{-1}_{x,\phi_{m_x+\tau_x}(x)}(N_z(\delta))$, the distance between $x$ and $y$ in the $F$-bundle direction is smaller than $C''e^{\frac{-\eta m_x}{2}}\delta$ for some constant $C''$. On the other hand, for the points $\phi_{m_x+\tau_x}(x)$ and ${\cal P}_{x,\phi_{m_x+\tau_x}}(y)$ in $N_z(\delta)$, their distance in the $E$-bundle direction is also smaller than $C''e^{\frac{-\eta m_x}{2}}\delta$.
		When $n$ is large enough, we have that $C''e^{\frac{-\eta m_x}{2}}\delta\leq C''e^{\frac{-\eta n}{2}}\delta<1/1000l$.
		
		Recall that $d(x,\phi_{m_x}(x))<1/1000l$ and $d(y,\phi_{m_y}(y))<1/1000l$ with $\phi_{m_y+\tau_y}(y)={\cal P}_{x,\phi_{m_x+\tau_x}}(y)$. Moreover, we have $d(x,\phi_{m_x+\tau_x}(x))<\frac{2}{1000l}$ and $d(y,\phi_{m_y+\tau_y}(y))<\frac{2}{1000l}$. These estimations imply 
		$$
		d(x,y)<\frac{1}{100l} \qquad {\rm and } \qquad d(\phi_{m_x+\tau_x}(x),\phi_{m_y+\tau_y}(y))<\frac{1}{100l}.
		$$
		For any $0<i<m_x$, the distance between $\phi_i(x)$ and $\phi_i(y)$ is smaller than $d(x,y)$ in the $E$-bundle direction, and smaller than $d(\phi_{m_x+\tau_x}(x),\phi_{m_y+\tau_y}(y))$ in the $F$-bundle direction. This implies that
		$$
		d_n(x,y)<\frac{1}{10l}.
		$$
		This is a contradiction.
	\end{proof}
	
	This estimation shows that there are at least $L_n=\#K_n(\beta,l)$ different mutually disjoint connected components of ${\cal P}^{-1}_{x_i,\phi_{m_{x_i}+\tau_{x_i}}(x_i)}(N_z(\delta))$ containing $x_i\in K_n(\beta,l)$ where $i=0,1,\cdots,L_n-1$. Denote them by
	$$
	R_0, R_1, \cdots, R_{L_n-1}.
	$$
	Moreover, the image ${\cal P}_{x_i,\phi_{m_{x_i}+\tau_{x_i}}(x_i)}(R_i)$ is crossing each  $R_j$ for any $i,j=0,1,\cdots,L_n-1$.
	
	This shows the Poincar\'e return map
	$$
	{\cal P}|_{N_z(\delta)}:\bigcup_{i=0}^{L_n-1}R_i\longrightarrow \bigcup_{i=0}^{L_n-1}{\cal P}_{x_i,\phi_{m_{x_i}+\tau_{x_i}}(x_i)}(R_i)
	$$
	is a horseshoe with $L_n$-components.
	
	\begin{claim*}
		The maximal invariant set $\Gamma_n$ contained in $\bigcup_{i=0}^{L_n-1}R_i$ is a hyperbolic set with respect to ${\cal P}$.
		This implies the suspension set
		$$
		\tilde{\Gamma}_n=\bigcup_{t\in\RR}\phi_t(\Gamma_n)
		$$
		is also hyperbolic for $\phi_t$.
	\end{claim*}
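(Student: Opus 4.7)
The plan is to verify the cone-field criterion for hyperbolicity of $\Gamma_n$ under the return map $\mathcal{P}$, using the domination $E\oplus F$ together with the uniform $(C,\eta,1,E)$-$\psi^*_t$-contraction and $(C,\eta,1,F)$-$\psi^*_t$-expansion inherited from the points $x_i\in \Delta_C$. The key observation is that every point of $\Gamma_n$ lives inside $N_z(\delta)$, whose diameter is much smaller than $|X(z)|$, so the flow speed on all orbits of $\Gamma_n$ is comparable to $|X(z)|$. Consequently the scaled linear Poincar\'e flow $\psi^*$ and the linear Poincar\'e flow $\psi$ differ only by a bounded multiplicative factor along the relevant pieces of orbit, and the uniform hyperbolic estimates for $\psi^*$ directly yield uniform hyperbolic estimates for the derivatives of $\mathcal{P}$ along $\Gamma_n$.

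First I would extend the invariant splitting $\mathcal{N}_z = E_z\oplus F_z$ to a continuous splitting on $N_z(\delta)$ by parallel transport (or via the exponential chart), and fix cone fields $\mathcal{C}^E$ and $\mathcal{C}^F$ of small aperture around these subbundles. Next, using Lemma~\ref{Lem:Continuity} applied to each local Poincar\'e return map $\mathcal{P}_i:=\mathcal{P}_{x_i,\phi_{m_{x_i}+\tau_{x_i}}(x_i)}$, the derivative of $\mathcal{P}_i$ on the small rectangle $R_i$ is uniformly close to $D_{x_i}\mathcal{P}_i=\psi_{m_{x_i}+\tau_{x_i}}|_{\mathcal{N}_{x_i}}$. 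Since $x_i$ is $(C,\eta,1,E)$-$\psi^*_t$-contracting and $\phi_{m_{x_i}+\tau_{x_i}}(x_i)$ is (up to bounded distortion by the flow speed near $z$) $(C,\eta,1,F)$-$\psi^*_t$-expanding, we obtain
$$
\|\psi_{m_{x_i}+\tau_{x_i}}|_{E_{x_i}}\|\ \le\ C_1 e^{-\eta n},\qquad m(\psi_{m_{x_i}+\tau_{x_i}}|_{F_{x_i}})\ \ge\ C_1^{-1} e^{\eta n},
$$
for some constant $C_1$ independent of $i$ and $n$. Combined with the $\psi_t$-domination, this shows that for $n$ large enough, $D\mathcal{P}_i$ maps $\mathcal{C}^F$ strictly inside $\mathcal{C}^F$ and contracts vectors of $\mathcal{C}^E$ by a factor less than, say, $1/2$, while $D\mathcal{P}_i^{-1}$ does the symmetric job on $\mathcal{C}^E$ and $\mathcal{C}^F$ respectively.

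The cone fields $\mathcal{C}^E,\mathcal{C}^F$ are then strictly forward invariant along $\Gamma_n$, with uniform contraction of $D\mathcal{P}$ on $\mathcal{C}^E$ and uniform expansion on $\mathcal{C}^F$. By the standard cone criterion (see e.g.\ Newhouse), this produces an invariant splitting $\mathcal{N}_{\Gamma_n}=E^s\oplus E^u$ with $E^s\subset\mathcal{C}^E$, $E^u\subset\mathcal{C}^F$, uniformly contracted and expanded by $D\mathcal{P}$; i.e.\ $\Gamma_n$ is hyperbolic for $\mathcal{P}$. For the second part, because the return times $m_{x_i}+\tau_{x_i}$ are uniformly bounded above (by $(1+\beta)n+1$, say) and below (by $n-1$), the suspension $\tilde\Gamma_n=\bigcup_{t\in\RR}\phi_t(\Gamma_n)$ is a compact $\phi_t$-invariant set whose flow-direction-complemented tangent splitting is $E^s\oplus \langle X\rangle\oplus E^u$; uniform hyperbolicity for $\phi_t$ then follows from the standard suspension correspondence between hyperbolic sets of the return map and hyperbolic sets of the suspended flow.

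The main obstacle, and the only delicate point, is the passage between $\psi^*$ (in which the multiplicative contraction/expansion estimates are stated) and $\psi=D\mathcal{P}$ (which we need to control for hyperbolicity of $\mathcal{P}$): a priori the ratio $|X(x)|/|X(\phi_t(x))|$ along long orbits can be wild. This is precisely where the confinement of $\Gamma_n$ inside the single flow box $V_z(\delta)$ is crucial—at every return, the orbit is back near $z$ and so the flow speed is comparable to $|X(z)|$, making $\psi$ and $\psi^*$ comparable at the return times and allowing the $\psi^*_t$-hyperbolic rates of $x_i$ to be promoted to genuine hyperbolic rates for $D\mathcal{P}$.
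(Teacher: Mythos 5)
Your overall strategy matches the paper's: a cone criterion on $N_z(\delta)$ built from the dominated splitting $E\oplus F$ and the uniform $(C,\eta,1,E)$- and $(C,\eta,1,F)$-$\psi^*_t$ rates at the reference points in $\Delta_C$, the key observation that $\psi$ and $\psi^*$ are comparable at the section (since every return point sits near $z$ and $\delta$ is negligible compared to $|X(z)|$), and passage to the suspension via the uniformly bounded return time. However, one step is stated incorrectly. You invoke Lemma~\ref{Lem:Continuity} applied to the \emph{full} return map $\mathcal{P}_i:=\mathcal{P}_{x_i,\phi_{m_{x_i}+\tau_{x_i}}(x_i)}$ to assert that $D\mathcal{P}_i$ on $R_i$ is uniformly close to $D_{x_i}\mathcal{P}_i=\psi_{m_{x_i}+\tau_{x_i}}|_{\mathcal{N}_{x_i}}$. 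But Lemma~\ref{Lem:Continuity} gives uniform continuity only for a time-$T$ Poincar\'e map with $T$ fixed, whereas $\mathcal{P}_i$ is an $m_{x_i}$-fold composition with $m_{x_i}\approx n\to\infty$; single-step $\epsilon$-closeness does not compose to norm-closeness of the long composition.

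What the paper's second bullet provides, and what your cone argument actually needs, is the chained estimate: every orbit in $\Gamma_n$ stays in the flow-speed-sized tubes $B(\phi_t(x_i),\delta|X(\phi_t(x_i))|)$ around a reference orbit $\{\phi_t(x_i)\}_{t\in[0,m_{x_i}]}$, so at each unit-time step the derivative of the local Poincar\'e map along the orbit of $y\in\Gamma_n$ is $\epsilon$-close to $\psi_1$ at the corresponding reference point $\phi_j(x_i)$. If $\epsilon$ is chosen small relative to the domination constant and the $(C,\eta,1)$ rates, the $m_{x_i}$-fold composition preserves the cone fields and contracts/expands at rates close to the reference rates, even though the composite derivative need not be norm-close to $\psi_{m_{x_i}+\tau_{x_i}}|_{\mathcal{N}_{x_i}}$. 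Your cone-field conclusion then follows; you need only replace the direct ``uniformly close $D\mathcal{P}_i$'' claim with this step-by-step version.
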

	
	\begin{proof}[Proof of the claim]
		The hyperbolicity of $\Gamma_n$ is derived from the following facts.
		\begin{itemize}
			\item $x_0,x_1,\cdots,x_{L_n-1}, \phi_{m_{x_0}}(x_0),\cdots,\phi_{m_{x_{L_n-1}}}(x_{L_n-1})\in\Delta_C$. All these points are both $(C,\eta,1,E)$-$\psi^*_t$-contracting and $(C,\eta,1,F)$-$\psi^*_t$-expanding.
			\item For every $y\in\Gamma_n$, its orbit satisfies
			$$
			\bigcup_{t\in\RR}\phi_t(y)\subset\bigcup_{i=1}^{L_n}\bigcup_{t\in[0,m_{x_i}]}B(\phi_t(x_i),\delta\cdot|\phi_t(x_i)|).
			$$
		\end{itemize}
		Then by Lemma \ref{Lem:Continuity}, any point $y\in\Gamma_n$  admits a hyperbolic splitting with respect to ${\cal P}$ which is close to the splitting
		$$
		T_{x_i}N_z(\delta)=[T_{x_i}N_z(\delta)\cap(E(x_i)\oplus\langle X(x)\rangle)]\oplus [T_{x_i}N_z(\delta)\cap(F(x_i)\oplus\langle X(x)\rangle)].
		$$
		Since the return time is uniformly bounded on $\Gamma_n$, the suspension set $\tilde{\Gamma}_n$ is hyperbolic with respect to $\phi_t$.
	\end{proof}

	Finally, we only need to estimate the return time of Poincar\'e map ${\cal P}$ restricted on every $R_i$ for $i=0,1,\cdots,L_n-1$.
	Notice that restricted on each $R_i$ where $i=0,1,\cdots,L_n-1$, for $j=1,\cdots,m_{x_i}-1$ the Poincar\'e map
	$$
	{\cal P}_{x_i,\phi_{j}(x_i)}:R_i\rightarrow N_{\phi_{j}(x_i)}(\delta|X(\phi_{j}(x_i))|)
	$$
	is well defined.
	
	By Lemma \ref{Lem:Time-control}, for any $y\in R_i$, the return time $t(y)$ of $y$ with respect to ${\cal P}_{x_i,\phi_{m_{x_i}+\tau_{x_i}}(x_i)}$ is bounded by
	$$
	t(y)\leq(1+\theta\delta)(m_{x_i}+1)\leq(1+\theta\delta)(1+\beta)n.
	$$

	So when we consider the horseshoe of the flow $\phi_t$,
	$$
	\tilde{\Gamma}_n=\bigcup_{t\in\RR}\phi_t(\Gamma_n),
	$$
	its topological entropy
	$$
	h_{\topo}(X,\tilde{\Gamma}_n)\geq\frac{1}{(1+\theta\delta)(1+\beta)n}\cdot\log L_n,
	$$
	where
	$$
	L_n=\#K_n(\beta,l)\geq\exp(n(h_{\mu}(X)-2\beta)).
	$$
	
	Recall that $\delta\ll \varepsilon/2\theta(h_{\mu}(X)-\varepsilon)$.
	For $\varepsilon>0$, by taking $\beta$ small enough and $n$ large enough, we have a hyperbolic horseshoe $\Lambda_{\varepsilon}$ of $\phi_t$, such that
	$$
	h_{\topo}(X,\Lambda_{\varepsilon})>h_{\mu}(X)-\varepsilon.
	$$
\end{proof}







Now we are in a position to prove Proposition~\ref{Prop:lower-semi-continuity of entropy}.

\begin{proof}[Proof of Proposition~\ref{Prop:lower-semi-continuity of entropy}]
	Let $X\in\mathscr{X}^*(M)$ with $h_{\topo}(X)>0$. For any $\varepsilon>0$, by the variational principle, there is a non-trivial ergodic measure $\mu$ of $X$ such that $h_{\mu}(X)>h_{\topo}(X)-\frac{\varepsilon}{2}$. 
	Then by Proposition~\ref{prop:LSC of entropy}, there is a hyperbolic horseshoe $\Lambda_{\varepsilon}$ close to $\supp(\mu)$ such that $h_{\topo}(X,\Lambda_{\varepsilon})>h_{\mu}(X)-\frac{\varepsilon}{2}$. Therefore we have that $h_{\topo}(X,\Lambda_{\varepsilon})>h_{\topo}(X)-\varepsilon$.
\end{proof}

\section{Intermediate entropy property of suspension flows over SFT}\label{Section:suspension flow of SFT}
In this section, we prove the suspension flow of subshifts of finite type(SFT) has intermediate entropy property, which implies Proposition \ref{Pro:intermediate entropy of full shift flow}. We first recall some definitions about symbolic dynamics.

Fix an integer $k\geq 2$, the symbolic space with $k$ symbols is defined as
$$\Sigma_k=\prod_{n=-\infty}^{+\infty}\{0,1,\cdots,k-1\}.$$
Any point $x\in\Sigma_k$ can be written as $(x_i)_{i=-\infty}^{+\infty}$ where $x_i\in\{0,1,\cdots,k-1\}$ is the $i$-th position of $x$. The metric on $\Sigma_k$ is given by
$$d(x,y)=\sum_{i=-\infty}^{+\infty}\frac{|x_i-y_i|}{k^{|i|}}, \ \ \forall x,y\in\Sigma_k,$$
where $x=(x_i)_{i=-\infty}^{+\infty}$ and $y=(y_i)_{i=-\infty}^{+\infty}.$


The shift map $\sigma:\Sigma_k\rightarrow\Sigma_k$ is defined as
$$(\sigma(x))_i=x_{i+1} \ \ \text{for any $x=(x_i)_{i=-\infty}^{+\infty}$.}$$
Obviously $\sigma$ is a homeomorphism and the symbolic dynamics $(\Sigma_k,\sigma)$ is called the \emph{full $k$-shift}.


Given a $k\times k$ $0$-$1$ matrix $A=[a_{ij}]_{i,j\in\{0,\cdots,k-1\}},$ where by $0$-$1$ matrix we mean $a_{ij}\in\{0,1\},$ define
$$\Sigma_A=\{(x_i)_{i=-\infty}^{+\infty}\in\Sigma_k: a_{x_i x_{i+1}}=1,\forall i\in\mathbb{Z}\}.$$
Note that $\Sigma_A$ is a $\sigma$-invariant and closed subset of $\Sigma_k.$ We call $(\Sigma_A,\sigma)$ the \emph{subshift of finite type} determined by the matrix $A,$ or an \emph{SFT} for simplicity.
In particular, the full shift $(\Sigma_k,\sigma)$ is an SFT determined by the matrix with all entries being $1.$

Recall that a $k\times k$ matrix $A=[a_{ij}]_{i,j\in\{0,\cdots,k-1\}}$ is called {\it non-negative} if $a_{ij}\geq 0$ for all $0\leq i,j\leq k-1$. A non-negative matrix $A$ is called {\it irreducible} if for any $0\leq i,j\leq k-1$, there is $m\in\mathbb{N}$ such that $a^{(m)}_{ij}>0$, where $a^{(m)}_{ij}$ is the $(i,j)$-entry of $A^m$. If the integer $m$ does not depend on $(i,j)$, then $A$ is called {\it aperiodic}.

Let $(\Sigma_A,\sigma)$ be an SFT and  $\varphi:\Sigma_A\rightarrow \mathbb{R}^+$ be a continuous function.
We define
$$\Sigma_A^\varphi=\{(x,t):x\in \Sigma_A, t\in [0,\varphi(x)]\}/(x,\varphi(x))\sim(\sigma(x),0).$$
The suspension flow over $(\Sigma_A,\sigma)$ is defined as $\sigma^\varphi_t:\Sigma_A^\varphi\rightarrow \Sigma_A^\varphi$ by $\sigma^\varphi_t(x,s)=(x,s+t)$.

\medskip

The following proposition states that the measure entropy of the suspension flow $(\Sigma_A^\varphi,\sigma_t^\varphi)$ has the intermediate property when $A$ is irreducible and the roof function $\varphi$ is continuous.

\begin{proposition}\label{Pro:intermediate entropy of suspension flow}
	Consider the subshift of finite type $(\Sigma_A,\sigma)$ generated by an irreducible $0$-$1$ matrix $A$. Assume $\varphi:\Sigma_A\rightarrow\mathbb{R}^+$ is a continuous roof function, and the suspension flow $(\Sigma_A^\varphi,\sigma_t^\varphi)$ has positive entropy $h_{\topo}(\sigma_t^\varphi,\Sigma_A^\varphi)>0$. Then for any constant $h\in(0,h_{\topo}(\sigma_t^\varphi,\Sigma_A^\varphi))$, there exists a $\sigma_t^\varphi$-ergodic measure $\tilde{\mu}$, satisfying that $h_{\tilde{\mu}}(\sigma_t^\varphi,\Sigma_A^\varphi)=h$.
\end{proposition}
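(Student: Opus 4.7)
The plan is to use the Abramov formula $h_{\tilde{\mu}}(\sigma_t^\varphi)=h_\mu(\sigma)/\int\varphi\,d\mu$, which sets up a bijection between $\sigma$-ergodic probability measures $\mu$ on $\Sigma_A$ and $\sigma_t^\varphi$-ergodic probability measures $\tilde\mu$ on $\Sigma_A^\varphi$. Under this correspondence the target problem becomes: given $h\in (0,h_{\topo}(\sigma_t^\varphi,\Sigma_A^\varphi))$, produce an ergodic $\sigma$-invariant measure $\mu$ on $\Sigma_A$ with
\[
h_\mu(\sigma)\;=\;h\cdot\!\int\varphi\,d\mu.
\]
This reformulation is the point of the remark just before Proposition~3.3: the non-constancy of $\varphi$ means that the prescribed entropy on the suspension does not reduce to a prescribed entropy on the base, but only to a prescribed ratio.

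First I would fix two endpoint ergodic measures on $\Sigma_A$. Any periodic orbit of $\sigma$ in $\Sigma_A$ supplies an ergodic measure $\mu_-$ with $h_{\mu_-}(\sigma)=0$, hence ratio $0<h$. On the other end, the variational principle applied on the suspension gives an ergodic measure $\tilde\mu_+$ with $h_{\tilde\mu_+}(\sigma_t^\varphi)>h$, and the corresponding $\mu_+$ on $\Sigma_A$ has ratio strictly greater than $h$. Since $\nu\mapsto \int\varphi\,d\nu$ is weak-$*$ continuous (because $\varphi$ is continuous) and $\nu\mapsto h_\nu(\sigma)$ is upper semi-continuous (the SFT is expansive), both $\int\varphi\,d\mu_\pm$ and $h_{\mu_\pm}(\sigma)$ are finite and controllable.

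The core of the argument is to interpolate between the ratios $h_{\mu_-}(\sigma)/\!\int\varphi\,d\mu_-=0$ and $h_{\mu_+}(\sigma)/\!\int\varphi\,d\mu_+>h$ through ergodic measures. Irreducibility of $A$ gives $(\Sigma_A,\sigma)$ the specification property (on each aperiodic component), so any two admissible words may be concatenated through a bridge of bounded length. For each $\alpha\in(0,1)$ I would glue long generic blocks of $\mu_+$ and $\mu_-$ in proportions $\alpha$ and $1-\alpha$, thereby producing, via a standard limit-of-empirical-measures construction, an ergodic measure $\mu_\alpha$ with
\[
h_{\mu_\alpha}(\sigma)\approx \alpha\,h_{\mu_+}(\sigma)+(1-\alpha)\,h_{\mu_-}(\sigma),\qquad \int\varphi\,d\mu_\alpha\approx \alpha\!\int\varphi\,d\mu_++(1-\alpha)\!\int\varphi\,d\mu_-.
\]
The ratio $h_{\mu_\alpha}(\sigma)/\!\int\varphi\,d\mu_\alpha$ should then, as a function of $\alpha$, sweep continuously from $0$ up to the ratio of $\mu_+$ and thus attain the value $h$ at some $\alpha_*$.

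The main obstacle will be to turn the approximate equality into the equality $h_{\mu}(\sigma)=h\!\int\varphi\,d\mu$ exactly, while keeping the resulting measure ergodic. This is the same difficulty that Sun had to face in the skew product setting \cite{sun-skew,sun-skew2}, and the standard cure is a recursive construction: one builds the point $x\in\Sigma_A$ generating $\mu$ as an infinite concatenation of longer and longer blocks, choosing the mixing proportion at each stage so as to correct the error accumulated at the previous stages, and then verifies that the ergodic average of $\varphi$ and the entropy along the orbit of $x$ exist and take exactly the prescribed values. Ergodicity follows by showing that $x$ is a generic point of a unique invariant measure. A secondary technical point is to handle the difference between irreducibility and aperiodicity: one restricts to an aperiodic component of $A$ on which both $\mu_+$ and $\mu_-$ may be supported (using that the topological entropy of the suspension is attained on at least one such component), so that the specification-based gluing is genuinely available.
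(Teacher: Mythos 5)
Your proposal is correct in outline and takes a genuinely different route from the paper. Both begin with Abramov's formula $h_{\tilde\mu}=h_\mu/\int\varphi\,d\mu$, reducing the problem to prescribing a ratio on the base. From there, however, the strategies diverge. You propose a specification/block-gluing argument: interpolate between a periodic-orbit measure and a high-entropy measure by concatenating long generic blocks in varying proportions, and then run a Sun-type recursive correction to hit the target ratio exactly while preserving ergodicity. This is a legitimate scheme, but the hard parts are exactly the ones you flag: the block-gluing construction gives only approximate control of entropy (which is merely upper semi-continuous), it does not by itself produce a \emph{continuous} family of measures on which to invoke an intermediate value argument, and the recursive construction needed to force exact equality and ergodicity of the limit is a substantial amount of work (comparable to the entirety of \cite{sun-skew}). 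The aperiodicity reduction you mention (passing to a single cyclic class of $A$ and to $\sigma^p$) is also needed, and is fine.

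The paper sidesteps all of this by staying inside the world of explicit Markov measures. It first recodes $(\Sigma_A,\sigma)$ on $n$-blocks (Lemma~\ref{Lem:sequence of SFT}) so that, for $n=N$ large, the roof function can be replaced, up to a small error in the suspension entropy, by a function $\varphi_N'$ that is constant on $0$-cylinders and takes values in a common rational lattice $\{l_i\tau\}$ (Lemma~\ref{Lem:embedding of SFT}). Such a suspension is then topologically conjugate to a suspension of an irreducible SFT $(\Sigma_B,\sigma)$ with \emph{constant} roof $\tau$, for which Abramov's formula degenerates to the linear relation $h_{\tilde\nu}=\tau^{-1} h_\nu$ and the ratio problem disappears. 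Finally, Lemma~\ref{Lem:intermediate entropy for SFT} builds a \emph{continuous} one-parameter family $t\mapsto\mu(t)$ of Markov measures on $\Sigma_B$, deforming the Parry measure's stochastic matrix toward a deterministic $0$-$1$ matrix: ergodicity for $t>0$ follows from irreducibility of $P(t)$, and continuity of entropy is immediate from the explicit formula $-\sum_{i,j}p_i(t)p_{ij}(t)\log p_{ij}(t)$. The intermediate value theorem then finishes. The net effect is that the paper's proof replaces a delicate specification/genericity construction with an algebraic deformation of transition matrices, gaining continuity for free and avoiding any discussion of entropy loss in weak-$*$ limits. Your approach would also work if carried out fully, but you would have to supply precisely the recursive corrections and entropy lower-bound estimates that the Markov-measure route renders unnecessary.
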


\subsection{Preliminaries of symbolic dynamics}

Given any pair $(p,P),$ where $p=(p_0,\cdots,p_{k-1})$ is a probability vector and $P=[p_{ij}]_{i,j\in\{0,\cdots,k-1\}}$ is a stochastic matrix ($p_{ij}\geq0$ and $\sum\limits_{j=0}^{k-1}p_{ij}=1$ for $i=0,\cdots,k-1$) satisfying $\sum\limits_{i=0}^{k-1}p_ip_{ij}=p_j,$ there exists an invariant measure $\mu$ \emph{determined by $(p,P)$} in the following way: for any $m,n\in\mathbb Z,n\geq0,$
$$\mu([l_0,\cdots,l_{n}]_{m})=p_{l_0}p_{l_0l_1}\cdots p_{l_{n-1}l_n},$$
where $[l_0,\cdots,l_{n}]_{m}=\{(a_i)_{i=-\infty}^{+\infty}: a_{m+j}=l_j,j=0,\cdots,n\}$ which generate the product $\sigma$-algebra of the SFT.
Such $\mu$ is called a \emph{Markov measure}. 

For Markov measures there are two basic facts we will use in the following. For more details readers may refer to \cite{walters}.
\begin{theorem}\label{Thm:Markov measure}
Let $\mu$ be the Markov measure determined by $(p,P)$ where $p$ is a probability vector and $P$ is a stochastic matrix. Then
\begin{enumerate}
\item $\mu$ is ergodic if and only if the matrix $P$ is irreducible.
\item the entropy of $\mu$ is $-\sum_{i,j}p_ip_{ij}\log p_{ij}.$
\end{enumerate}
\end{theorem}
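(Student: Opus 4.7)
The plan is to prove part (1) via the Cesàro-mean characterization of ergodicity applied to cylinder sets, and part (2) via the Kolmogorov--Sinai theorem applied to the time-zero partition. Both arguments exploit the explicit Markov formula $\mu([l_0,\ldots,l_n]_m)=p_{l_0}p_{l_0 l_1}\cdots p_{l_{n-1}l_n}$ together with the stationarity identity $\sum_i p_i p_{ij}=p_j$.

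For part (1), I would use the standard fact that a $\sigma$-invariant measure $\mu$ is ergodic if and only if $\frac{1}{N}\sum_{n=0}^{N-1}\mu(A\cap\sigma^{-n}B)\to\mu(A)\mu(B)$ for all measurable $A,B$. Since the cylinders form a generating algebra, it suffices to verify this on cylinders $A=[i_0,\ldots,i_r]_0$ and $B=[j_0,\ldots,j_s]_0$. For $n>r$, a direct application of the Markov formula gives
$$
\mu(A\cap\sigma^{-n}B)=p_{i_0}p_{i_0 i_1}\cdots p_{i_{r-1}i_r}\cdot p^{(n-r)}_{i_r j_0}\cdot p_{j_0 j_1}\cdots p_{j_{s-1}j_s},
$$
where $p^{(m)}_{ij}$ denotes the $(i,j)$-entry of $P^m$. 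Thus the required convergence reduces to the statement that $\frac{1}{N}\sum_{n=0}^{N-1} p^{(n)}_{ij}\to p_j$ for every $i,j$, which is the Cesàro form of Perron--Frobenius for irreducible stochastic matrices. For the converse, if $P$ is reducible, I would pick a proper $P$-invariant subset $I\subsetneq\{0,\ldots,k-1\}$; then $\{x\in\Sigma_A:x_0\in I\}$ is $\sigma$-invariant up to a $\mu$-null set and has $\mu$-measure strictly between $0$ and $1$, contradicting ergodicity.

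For part (2), I would take $\xi=\{[0]_0,\ldots,[k-1]_0\}$ to be the time-zero partition. Since a point of $\Sigma_A$ is determined by the cells of $\sigma^{-n}\xi$ it visits as $n$ ranges over $\ZZ$, $\xi$ is a (two-sided) generator, and Kolmogorov--Sinai yields $h_\mu(\sigma)=\lim_{n\to\infty} n^{-1}H_\mu\bigl(\bigvee_{i=0}^{n-1}\sigma^{-i}\xi\bigr)$. The join consists exactly of the length-$n$ cylinders, so the Markov formula gives
$$
H_\mu\Bigl(\bigvee_{i=0}^{n-1}\sigma^{-i}\xi\Bigr)=-\sum_{l_0,\ldots,l_{n-1}}p_{l_0}p_{l_0 l_1}\cdots p_{l_{n-2}l_{n-1}}\log\bigl(p_{l_0}p_{l_0 l_1}\cdots p_{l_{n-2}l_{n-1}}\bigr).
$$
Expanding the logarithm into $n$ summands and summing out the irrelevant indices, using $\sum_j p_{ij}=1$ to collapse from the right and $\sum_i p_i p_{ij}=p_j$ to collapse from the left, each of the $n-1$ log-transition terms reduces to $-\sum_{i,j}p_i p_{ij}\log p_{ij}$ while the $\log p_{l_0}$ term reduces to $-\sum_i p_i \log p_i$. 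Dividing by $n$ and letting $n\to\infty$ delivers the stated entropy formula.

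The main technical point is the Cesàro limit $\frac{1}{N}\sum p^{(n)}_{ij}\to p_j$ used in part (1): since $P$ is only assumed irreducible and not aperiodic, the powers $P^n$ themselves need not converge entry-wise (there may be a nontrivial period), and one genuinely needs the averaged Perron--Frobenius theorem for irreducible non-negative matrices. Everything else is bookkeeping with cylinders, the Markov property, and stationarity.
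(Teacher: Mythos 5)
The paper does not prove this theorem itself---it quotes it as a standard fact with a pointer to Walters---and your argument is essentially the textbook proof found there: the Ces\`aro form of Perron--Frobenius, $\frac1N\sum_{n<N}p^{(n)}_{ij}\to p_j$, verified on the generating algebra of cylinders for ergodicity, and the time-zero generator together with Kolmogorov--Sinai for the entropy formula; both computations are correct. The only caveat (shared by the statement as quoted) is that the ``ergodic $\Rightarrow$ irreducible'' direction implicitly assumes $p_i>0$ for every $i$, since otherwise your invariant set $\{x:x_0\in I\}$ could have measure $0$ or $1$ (e.g.\ a reducible $P$ whose stationary vector is supported on one absorbing state gives an ergodic Dirac-type Markov measure); this is harmless for the paper, which applies the result only to the Parry measure, whose stationary vector is strictly positive.
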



The Perron-Frobenius Theorem gives some important properties for non-negative matrices, see for example \cite{gantmacher,walters}.

\begin{theorem}[Perron-Frobenius Theorem]\label{Thm:Perron-Frobenius}
Let $A$ be a non-negative $k\times k$ matrix. Then the matrix $A$ admits a non-negative eigenvalue $\lambda$ satisfying the following properties:
\begin{enumerate}
\item\label{item:other eigenvalue} All the other eigenvalues of $A$ have absolute value no greater than $\lambda$.
\item\label{item:bound of lambda}  $\min\limits_{0\leq i\leq k-1}\sum\limits_{j=0}^{k-1} a_{ij}\leq \lambda\leq \max\limits_{0\leq i\leq k-1}\sum\limits_{j=0}^{k-1} a_{ij}$.
\item\label{item:eigenvector} Corresponding to $\lambda$, there is a non-negative left (row) eigenvector $u=(u_0,u_1,\cdots,u_{k-1})$ and a non-negative right (column) eigenvector $v=(v_0,v_1,\cdots,v_{k-1})^T.$
\item\label{item:irreducible} Furthermore if $A$ is irreducible, then
		\begin{itemize}
			\item[--] $\lambda$ is a simple eigenvalue of $A$ and all the other eigenvalues of $A$ have absolute value strictly smaller than $\lambda$,
			\item[--] the two eigenvectors $u$ and $v$ in item~\ref{item:eigenvector} are strictly positive (i.e. $u_i, v_i> 0$ for all $i$).
       \end{itemize}
\end{enumerate}	
\end{theorem}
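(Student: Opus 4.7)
The plan is to establish the theorem in three stages: first produce a non-negative leading eigenvalue together with a non-negative eigenvector, then verify items 1--3, and finally sharpen the conclusions under irreducibility to obtain item 4.

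For the existence part, I would apply Brouwer's fixed-point theorem on the standard simplex $\Delta=\{x\in\RR^k_{\geq 0}:\sum_i x_i=1\}$. For the strictly positive perturbation $A_\varepsilon=A+\varepsilon J$ (with $J$ the all-ones matrix), the map $T_\varepsilon(x)=A_\varepsilon x/\|A_\varepsilon x\|_1$ is a continuous self-map of $\Delta$, so has a fixed point $v_\varepsilon$ satisfying $A_\varepsilon v_\varepsilon=\lambda_\varepsilon v_\varepsilon$ with $\lambda_\varepsilon=\|A_\varepsilon v_\varepsilon\|_1>0$. Since $\lambda_\varepsilon$ is uniformly bounded above (via the row-sum estimate in Stage 2), compactness of $\Delta$ lets me extract a convergent subsequence as $\varepsilon\to 0$, producing $(\lambda,v)$ with $\lambda\geq 0$, $v\in\Delta$, and $Av=\lambda v$.

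For item 1, let $\mu$ be any complex eigenvalue of $A$ with eigenvector $u\in\CC^k$. Applying the triangle inequality coordinatewise to $\mu u_i=\sum_j a_{ij}u_j$ yields $A|u|\geq|\mu|\,|u|$. Combined with the Collatz--Wielandt characterization
$$\lambda=\sup\{s\geq 0:\exists\, x\geq 0,\ x\neq 0,\ Ax\geq sx\},$$
(whose supremum is realized by the Perron vector constructed above) this forces $|\mu|\leq\lambda$. Item 2 follows by two elementary computations: the upper bound from evaluating $\lambda v_{i_0}=\sum_j a_{i_0 j}v_j\leq v_{i_0}\sum_j a_{i_0 j}$ at a coordinate $i_0$ where $v_{i_0}=\max_i v_i$; the lower bound by applying Collatz--Wielandt to the all-ones vector, for which $Ae\geq(\min_i\sum_j a_{ij})e$. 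For item 3, the left eigenvector $u$ is obtained by running the same argument with $A^T$, noting that $A$ and $A^T$ have identical spectra.

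For item 4 I proceed in three steps. First, any non-negative $\lambda$-eigenvector $v$ must be strictly positive: otherwise, split indices into $I=\{i:v_i>0\}$ and $J=\{j:v_j=0\}$ (both nonempty), and expand $0=\lambda v_j=\sum_i a_{ji}v_i$ to force $a_{ji}=0$ whenever $(j,i)\in J\times I$, meaning no edge of the directed graph of $A$ leads from $I$ to $J$ -- contradicting irreducibility. Second, geometric simplicity: given two non-negative Perron eigenvectors $v,w$, both strictly positive by the previous step, the combination $v-tw$ with $t=\min_i v_i/w_i$ is again a non-negative eigenvector but has a zero coordinate, hence vanishes identically, so $v,w$ are proportional. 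Algebraic simplicity follows by a dual argument: if $(A-\lambda I)w=cv$ for some generalized eigenvector $w$ and $c\neq 0$, pairing with the strictly positive left eigenvector $u^T$ gives $0=u^T(A-\lambda I)w=c\,u^Tv$, contradicting $u^Tv>0$. Third, for the strict inequality $|\mu|<\lambda$ when $\mu\neq\lambda$: from $A|u|\geq\lambda|u|$ and uniqueness of the Perron direction, $|u|$ must be a scalar multiple of $v$, so $A|u|=\lambda|u|$ and the triangle inequality is attained coordinatewise; this forces, for each row $i$, all $u_j$ with $a_{ij}>0$ to share a common phase. Propagating phases along the directed graph of $A$ via irreducibility yields $u=e^{i\theta}|u|$ for a single $\theta$, and therefore $\mu=\lambda$.

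The main obstacle is the sharpening to item 4 --- specifically the passage from geometric to algebraic simplicity of $\lambda$, and the strict bound $|\mu|<\lambda$ for peripheral eigenvalues. Both rely on extracting the full combinatorial content of irreducibility (strong connectivity of the directed graph of $A$) to propagate zero/phase information between coordinates; it is worth noting that the strict inequality in fact requires $A$ to be aperiodic as well, and the argument above implicitly needs this refinement when transporting the common phase around every directed cycle.
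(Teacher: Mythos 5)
The paper does not prove this theorem at all: it is quoted as a classical result with a pointer to \cite{gantmacher,walters}, so there is no internal argument to compare yours against. What you propose is the standard textbook proof (a Brouwer/compactness argument for existence of the Perron pair, the Collatz--Wielandt characterization for extremality, and the combinatorics of strong connectivity for item 4), and it is essentially sound. Two steps are asserted rather than argued and each needs a line: (i) that the $\lambda$ you construct actually realizes the Collatz--Wielandt supremum, i.e.\ that $Ax\geq sx$ with $x\geq 0$, $x\neq 0$ forces $s\leq\lambda$ --- this is what items 1 and the lower bound in item 2 really rest on; it follows, e.g., by pairing with the strictly positive left eigenvector of $A_\varepsilon$ and letting $\varepsilon\to 0$, or from $A^nx\geq s^nx$ and $s\le\lim\|A^n\|^{1/n}$; (ii) in item 4, that the super-eigenvector inequality $A|u|\geq\lambda|u|$ is in fact an equality (pair the slack vector with the positive left eigenvector), and that geometric simplicity must be checked against arbitrary real eigenvectors, not only non-negative ones (subtract the appropriate multiple of $v$ to create a zero coordinate and invoke your Step 1).

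Your closing caveat is correct and is the most substantive point: the first bullet of item 4, as stated in the paper, overstates the classical theorem. For an irreducible but periodic matrix, e.g.\ $A=\left(\begin{smallmatrix}0&1\\ 1&0\end{smallmatrix}\right)$ with eigenvalues $1$ and $-1$, there are peripheral eigenvalues of modulus exactly $\lambda$; the strict inequality $|\mu|<\lambda$ requires aperiodicity (primitivity), precisely because your phase-propagation argument only pins down $\arg\mu$ modulo $2\pi/p$ where $p$ is the gcd of cycle lengths. Simplicity of $\lambda$ and strict positivity of $u,v$ do hold under irreducibility alone, and these are the only parts of item 4 the paper actually uses (in the construction of the Parry measure and in the proof of Lemma~\ref{Lem:intermediate entropy for SFT}, which only needs item 2), so nothing downstream is affected --- but a proof of the strict inequality as stated cannot be completed, exactly as you note.
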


Let $A$ be an irreducible matrix, by Theorem~\ref{Thm:Perron-Frobenius} there exists a simple eigenvalue $\lambda>0$
with  a left (row) eigenvector $u=(u_0,\cdots,u_{k-1})$ and a right (column) eigenvector $v=(v_0,\cdots,v_{k-1})^T$ that are strictly positive. We may assume $\sum_{i=0}^{k-1}u_iv_i=1.$
Let
\begin{equation}\label{Parry measure}
p_i=u_iv_i,\ \ \ \  p_{ij}=\dfrac{a_{ij}v_j}{\lambda v_i},\ \ \ 0\leq i,j\leq k-1.
\end{equation}
The Markov measure determined by (\ref{Parry measure}) is called \emph{Parry measure} which we denote as $\mu_A.$ The following theorem states that the Parry measure is the unique measure with maximal entropy.

\begin{theorem}[Theorem 8.10 in \cite{walters}]\label{Thm:Parry measaure}
If $(\Sigma_A,\sigma)$ is an SFT with $A$ being irreducible. Then the Parry measure $\mu_A$ is the unique measure with maximal entropy for $\sigma,$ that is $$h_{\mu_A}(\sigma,\Sigma_A)=h_{\topo}(\sigma,\Sigma_A)=\log\lambda.$$
\end{theorem}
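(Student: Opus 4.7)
My plan is to verify the theorem in three stages: check that the data \eqref{Parry measure} defines a legitimate Markov measure, compute $h_{\mu_A}(\sigma) = \log\lambda$, show $h_{\topo}(\sigma,\Sigma_A) = \log\lambda$, and finally obtain uniqueness. First I would check that $(p_0,\dots,p_{k-1})$ is a probability vector and $P=[p_{ij}]$ is stochastic with $pP = p$. All three identities follow from the Perron–Frobenius relations $uA = \lambda u$, $Av = \lambda v$, and the normalization $\sum_i u_iv_i = 1$: stochasticity of $P$ uses $\sum_j a_{ij}v_j = \lambda v_i$, and $pP = p$ uses $\sum_i u_i a_{ij} = \lambda u_j$. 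Since $A$ is irreducible and $p_{ij}>0$ exactly when $a_{ij}=1$, the matrix $P$ is irreducible too, so Theorem~\ref{Thm:Markov measure}(1) gives that $\mu_A$ is ergodic.

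Next I would compute $h_{\mu_A}(\sigma)$ directly from Theorem~\ref{Thm:Markov measure}(2). Substituting \eqref{Parry measure},
\[
h_{\mu_A}(\sigma) = -\sum_{i,j} \frac{u_i a_{ij} v_j}{\lambda}\Bigl[\log a_{ij} + \log v_j - \log v_i - \log\lambda\Bigr].
\]
The $\log a_{ij}$ contribution vanishes since $a_{ij}\in\{0,1\}$ (terms with $a_{ij}=0$ contribute nothing, terms with $a_{ij}=1$ give $\log 1=0$). Using $\sum_i u_i a_{ij} = \lambda u_j$ and $\sum_j a_{ij} v_j = \lambda v_i$, the $\log v_j$ and $\log v_i$ sums each equal $\sum_i u_iv_i\log v_i$ and cancel, while the $\log\lambda$ term yields $\log\lambda\cdot\sum_i u_iv_i = \log\lambda$. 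For the topological entropy, I would count admissible $n$-words: this number equals $\sum_{i,j}(A^{n-1})_{ij}$, and Perron–Frobenius together with irreducibility gives $A^{n}\sim \lambda^{n}\,vu$, so this count grows like $c\lambda^{n}$. Since the partition $\mathcal{P}$ into $1$-cylinders is a generator for $\sigma$, the standard formula $h_{\topo}(\sigma,\Sigma_A)=\lim_{n\to\infty}\tfrac1n\log\#\mathcal{P}_0^{n-1}$ yields $\log\lambda$. Combined with the variational principle and the entropy of $\mu_A$, the two equalities in the statement are established.

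The main obstacle is uniqueness. My strategy rests on the Kolmogorov–Sinai formula $h_\mu(\sigma) = \lim_{n\to\infty}\tfrac1n H_\mu(\mathcal{P}_0^{n-1})$ and Jensen's inequality. For any shift-invariant $\mu$, set $q_{i_0\cdots i_{n-1}} = \mu([i_0,\dots,i_{n-1}]_0)$. Using concavity of $-x\log x$, I would show that
\[
-\sum_{i_0,\dots,i_{n-1}} q_{i_0\cdots i_{n-1}}\log q_{i_0\cdots i_{n-1}} \;\le\; \log\#\{\text{admissible words of length }n\} = n\log\lambda + O(1),
\]
and more precisely that equality in the limit forces, at every finite scale, the conditional measures on cylinders to match the Parry ratios $a_{ij}v_j/(\lambda v_i)$. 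To make this quantitative I would introduce, for each $n$, the $n$-step Markov approximation $\mu^{(n)}$ of $\mu$ (a genuine Markov measure after passing to the $n$-block recoding of $(\Sigma_A,\sigma)$, whose underlying transition matrix has the same Perron eigenvalue $\lambda$), and observe $h_\mu(\sigma)\le h_{\mu^{(n)}}(\sigma)$ with equality in the limit. The strict positivity of $u$ and $v$ from Theorem~\ref{Thm:Perron-Frobenius}(4) makes $-x\log x$ strictly concave on the relevant positive simplex, so the Lagrange-multiplier system for maximizing the Markov entropy subject to $qQ=q$ has the unique critical point given by \eqref{Parry measure}. Passing to the limit $n\to\infty$, equality $h_\mu(\sigma)=\log\lambda$ forces $\mu$ to agree with $\mu_A$ on every cylinder, hence $\mu=\mu_A$. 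The delicate step will be making the strict-concavity gap quantitative and uniform across the block recodings, so that the limit passage actually propagates equality down to all cylinder probabilities rather than only up to asymptotic exponential rates.
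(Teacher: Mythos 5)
The paper does not actually prove this statement: it is quoted verbatim from Walters (Theorem 8.10), so there is no internal argument to compare against; I will judge your sketch on its own merits and against the standard textbook proofs. Your first three stages are essentially correct: the Perron--Frobenius identities $uA=\lambda u$, $Av=\lambda v$, $\sum_i u_iv_i=1$ do verify that (\ref{Parry measure}) defines a stochastic, irreducible, stationary pair, and your cancellation computation giving $h_{\mu_A}(\sigma)=\log\lambda$ via Theorem~\ref{Thm:Markov measure}(2) is the standard one. One small repair: $A^n\sim\lambda^n vu$ requires $A$ aperiodic, not merely irreducible; for the entropy count you should instead argue that the number of admissible $n$-words has exponential growth rate $\log\lambda$ (spectral radius plus the semisimplicity of the peripheral spectrum, or a supermultiplicativity argument), which is all you need.

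The genuine gap is the uniqueness step, which is the heart of the theorem, and as written it is not closed. Two specific problems: first, your assertion that ``equality in the limit forces, at every finite scale, the conditional measures on cylinders to match the Parry ratios'' does not follow from the Jensen/counting bound $H_\mu(\mathcal P_0^{n-1})\le\log\#\{\text{admissible }n\text{-words}\}$ --- indeed $\mu_A$ itself does \emph{not} maximize $H_\mu(\mathcal P_0^{n-1})$ at finite $n$ (its cylinder measures $u_{i_0}v_{i_{n-1}}\lambda^{-(n-1)}$ are not uniform over admissible words), so no finite-scale rigidity can come from that inequality. Second, the Lagrange-multiplier claim for the finite-dimensional Markov optimization is incomplete: a maximizer could a priori lie on the boundary (some allowed transition probabilities zero), and joint strict concavity in $(q,Q)$ under the constraint $qQ=q$ is not established; the standard fix is the relative-entropy form of Jensen: for any stationary stochastic $(q,Q)$ supported in $\Sigma_A$ one has $h(q,Q)-\log\lambda=\sum_{i,j}q_iQ_{ij}\log\bigl(p_{ij}/Q_{ij}\bigr)\le 0$, with equality iff $Q_{ij}=p_{ij}$ on the support of $q$, and then irreducibility of $P$ forces $q=p$. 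Finally, the ``quantitative, uniform strict-concavity gap'' you flag as the delicate point is actually unnecessary: for each \emph{fixed} $n$, monotonicity of conditional entropy gives $h_{\mu^{(n)}}\ge h_\mu=\log\lambda$, while the finite-dimensional bound (or the variational principle) gives $h_{\mu^{(n)}}\le\log\lambda$, so equality holds exactly at every $n$; uniqueness of the finite-dimensional maximizer then yields $\mu^{(n)}=\mu_A$, and since $\mu^{(n)}$ agrees with $\mu$ on all cylinders of length $n+1$, letting $n\to\infty$ gives $\mu=\mu_A$ with no limit of entropies at all. With these repairs your route (Parry's variational argument via Markov approximations) works and is a legitimate alternative to Walters' own uniqueness proof, which instead uses the Shannon--McMillan--Breiman theorem, the two-sided estimate $\mu_A(C)\asymp\lambda^{-n}$ on admissible cylinders, and mutual singularity of distinct ergodic measures.
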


\subsection{Intermediate entropy property for SFT}\label{subsec:intermediate entropy for SFT}

In this section we show that an SFT $(\Sigma_A,\sigma)$ has the intermediate entropy property when $A$ is irreducible.

\begin{lemma}\label{Lem:intermediate entropy for SFT}
Let $(\Sigma_A,\sigma)$ be an SFT determined by a $k\times k$ irreducible $0$-$1$ matrix $A.$  Then there is a continuous map $\mu(\cdot):[0,1]\rightarrow\mathcal M_{inv}(\sigma,\Sigma_A)$ satisfying the following:
	\begin{enumerate}
		\item\label{item:extremal point} $h_{\mu(0)}(\sigma,\Sigma_A)=0$ and $\mu(1)=\mu_A$ where $\mu_A$ is the unique measure with maximal entropy;
		\item\label{item:ergodicity} for any $t\in(0,1]$, the measure $\mu(t)$ is ergodic;
		\item\label{item:continuity of entropy} the entropy map induced by $\mu(\cdot)$
$$h:[0,1]\rightarrow [0,h_{\topo}(\sigma,\Sigma_A)]$$
		$$\text{ \ \ \ \ } t\mapsto h_{\mu(t)}(\sigma,\Sigma_A)$$
		is continuous.
	\end{enumerate}
\end{lemma}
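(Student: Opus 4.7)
The plan is to construct a continuous one-parameter family $\{\mu(t)\}_{t \in [0,1]}$ of Markov measures that interpolates between a zero-entropy periodic-orbit measure at $t=0$ and the Parry measure $\mu_A$ at $t=1$, by convex interpolation at the level of stochastic transition matrices. This reduces the continuity and ergodicity verifications to elementary properties of the Perron--Frobenius formalism together with the explicit entropy formula from Theorem~\ref{Thm:Markov measure}.

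For the construction, write $P^* = [p^*_{ij}]$ with $p^*_{ij} = a_{ij} v_j / (\lambda v_i)$ and $p^* = (u_0 v_0, \ldots, u_{k-1} v_{k-1})$ for the Parry pair as in (\ref{Parry measure}), so $\mu_A = \mu_{(p^*, P^*)}$ and $P^*$ is irreducible. By irreducibility of $A$, fix a simple cycle $\gamma = (x_0, x_1, \ldots, x_{m-1})$ in $\Sigma_A$ with pairwise distinct $x_j$ (set $x_m := x_0$). Define a second stochastic matrix $P^{(0)}$ by $P^{(0)}_{x_j, x_{j+1}} = 1$ for $0 \leq j \leq m-1$, all other entries in these rows being zero, and $P^{(0)}_{ij} = p^*_{ij}$ for every $i \notin \{x_0, \ldots, x_{m-1}\}$. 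The vector $p^{(0)}$ with $p^{(0)}_{x_j} = 1/m$ and $p^{(0)}_i = 0$ otherwise is a stationary distribution for $P^{(0)}$, and the associated Markov measure is the orbit measure $\delta_\gamma$ of entropy $0$. For $t \in [0,1]$ set $P(t) = (1-t) P^{(0)} + t P^*$, and let $\mu(t) = \mu_{(p(t), P(t))}$, where $p(t)$ is a stationary distribution for $P(t)$ (uniquely determined for $t > 0$).

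For the verification: entrywise one has $P(t)_{ij} \geq t\, p^*_{ij}$, so irreducibility of $P^*$ passes to $P(t)$ for every $t > 0$; hence Theorem~\ref{Thm:Markov measure}(1) yields ergodicity of $\mu(t)$ on $(0,1]$, and Theorem~\ref{Thm:Perron-Frobenius}(\ref{item:irreducible}) gives that $p(t)$ is strictly positive and continuous on $(0,1]$. Weak* continuity of $\mu(\cdot)$ follows because each cylinder mass
\[
\mu(t)([l_0, \ldots, l_n]_m) = p_{l_0}(t)\, p_{l_0 l_1}(t) \cdots p_{l_{n-1} l_n}(t)
\]
is continuous in $t$, and finite linear combinations of cylinder indicators are uniformly dense among continuous functions on the Cantor set $\Sigma_A$. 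The formula of Theorem~\ref{Thm:Markov measure}(2),
\[
h_{\mu(t)} = -\sum_{i,j} p_i(t)\, p_{ij}(t) \log p_{ij}(t),
\]
combined with continuity of $p_i(t), p_{ij}(t)$ and of $x \log x$ on $[0,1]$ (with $0 \log 0 = 0$), gives continuity of $h_{\mu(t)}$ on $[0,1]$; the endpoint values $h_{\mu(0)} = 0$ and $\mu(1) = \mu_A$ (so $h_{\mu(1)} = \log \lambda$ by Theorem~\ref{Thm:Parry measaure}) are immediate from the construction.

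The main obstacle is the behaviour at the degenerate endpoint $t = 0$: since $P^{(0)}$ is reducible, the Perron eigenvalue $1$ fails to be simple there, and standard eigenvector perturbation theory does not directly yield $p(t) \to p^{(0)}$ as $t \to 0^+$. The remedy is a Markov-chain argument: any subsequential limit $q$ of $p(t)$ satisfies $qP^{(0)} = q$, while the cycle $\gamma$ is the unique recurrent class of $P^{(0)}$ (from any $i \notin \gamma$ the chain uses $P^*$-rows, and by irreducibility of $P^*$ hits $\gamma$ in finite time almost surely). Thus $p^{(0)}$ is the unique stationary distribution of $P^{(0)}$, forcing $p(t) \to p^{(0)}$; this secures weak* continuity of $\mu(\cdot)$ at $t=0$ and, through the entropy formula, the endpoint value $h_{\mu(0)} = 0$.
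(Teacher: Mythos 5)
Your proof is correct, and it takes a genuinely different route from the paper's, although both arguments live in the same world of one-parameter families of Markov measures built from the Parry data. The paper's construction fixes, for each state $i$, a single admissible successor $l(i)$ and deforms the Parry row $(p_{ij})_j$ by multiplying the off-$l(i)$ entries by $t$ and dumping the lost mass onto column $l(i)$; at $t=0$ the resulting $P(0)$ is a $0$-$1$ stochastic matrix, so the stationary Markov measure has zero entropy, and the paper then asserts that a continuous choice of left eigenvector $p(t)$ can be made across $[0,1]$. Your construction instead anchors $t=0$ at a periodic-orbit measure on a fixed simple cycle $\gamma$ and interpolates convexly, $P(t)=(1-t)P^{(0)}+tP^*$, with $P^{(0)}$ deterministic on $\gamma$ and equal to the Parry rows elsewhere. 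What you buy with this is a clean handling of the degenerate endpoint: the paper's $P(0)$ can in principle have several closed classes (several cycles of the map $i\mapsto l(i)$), so the simplicity of the Perron eigenvalue fails there, and the claimed continuity of $p(\cdot)$ at $t=0$ is stated without justification. In your construction the unique-recurrent-class argument (non-cycle states follow the irreducible $P^*$-rows until they are absorbed into $\gamma$, hence are transient) pins down $p^{(0)}$ as the unique stationary distribution of $P^{(0)}$, and compactness of the simplex then forces $p(t)\to p^{(0)}$, giving both the weak* continuity of $\mu(\cdot)$ and the endpoint entropy value with no extra appeal to perturbation theory for reducible matrices. The remaining verifications — irreducibility of $P(t)$ for $t>0$ via $P(t)\geq tP^*$, continuity of cylinder masses and hence weak* continuity, continuity of the entropy formula $-\sum p_i(t)p_{ij}(t)\log p_{ij}(t)$, and the endpoint identifications $h_{\mu(0)}=0$ and $\mu(1)=\mu_A$ — are all carried out correctly and parallel the paper's.
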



\begin{proof}
Let $p=(p_0,\cdots,p_{k-1})$ and $P=[p_{ij}]_{i,j=0}^{k-1}$ be the probability vector and stochastic matrix in (\ref{Parry measure}). Since $A$ is irreducible, by Theorem \ref{Thm:Parry measaure} the Parry measure $\mu_A$ determined by $(p,P)$ is the measure with maximal entropy.

Note that  for any $i\in\{0,\cdots,k-1\}$ there is $l(i)\in \{0,\cdots,k-1\}$ such that $a_{il(i)}=1$. For any $t\in [0,1]$, define a  $k\times k$ matrix $P(t)=[p_{ij}(t)]_{i,j=0}^{k-1}$ as follows:
\begin{equation*}
p_{ij}(t) =
\begin{cases}
tp_{ij}, & \textrm{$j\neq l(i)$}\\
p_{il(i)}+\sum\limits_{\substack{0\leq r\leq k-1,\\r\neq l(i)}} (1-t)p_{ir}, & \textrm{$j=l(i)$}
\end{cases}.
\end{equation*}
Obviously $P(t)$ is a stochastic matrix.

Let $\lambda(t)$ be the non-negative eigenvalue of $P(t)$ given by Theorem~\ref{Thm:Perron-Frobenius}. Since $\sum\limits_{j=0}^{k-1}p_{ij}(t)=1$ for any $i=0,\cdots,k-1,$ by item \ref{item:bound of lambda} of Theorem \ref{Thm:Perron-Frobenius} we have $\lambda(t)=1.$ We  may choose $p(t)=(p_0(t),\cdots,p_k(t))$ the (non-negative) row eigenvector corresponding to $\lambda(t)$ to satisfy that (i) $\sum_{i=0}^{k-1}p_{i}(t) =1;$ (ii) $p(1)=(p_0,\cdots,p_{k-1});$ (iii) $p(t)=(p_0(t),\cdots,p_{k-1}(t))$ varies continuously with respect to $t.$

Let $\mu(t)$ be the Markov measure determined by $(p(t),P(t)).$	Obviously $\mu(t)$ is continuous with respect to $t\in[0,1].$ Note that for $t\in(0,1],$ we have $p_{ij}(t)>0$ if and only if $a_{ij}(t)>0,$ hence $P(t)$ is irreducible since $A$ is. By Theorem~\ref{Thm:Markov measure}, we have $\mu(t)\in\mathcal M_{inv}(\sigma,\Sigma_A)$ is ergodic for any $t\in(0,1].$ Thus item~\ref{item:ergodicity} is satisfied.

By Theorem \ref{Thm:Markov measure}, we have $h_{\mu(t)}(\sigma,\Sigma_A)=-\sum_{0\leq i,j\leq k-1}p_i(t) p_{ij}(t)\log p_{ij}(t).$ Hence $h_{\mu(t)}(\sigma,\Sigma_A)$ is continuous with respect to $t$ since the pair $(p(t),P(t))$ varies continuously with respect to $t,$  which is item~\ref{item:continuity of entropy}.  When $t=0$ the matrix $P(0)$ is a $0$-$1$ matrix and thus $h_{\mu(0)}(\sigma,\Sigma_A)=0.$ Moreover, we have $\mu(1)=\mu_A$ where $\mu_A$ is the Parry measure. This proves item~\ref{item:extremal point} .
\end{proof}




\subsection{Conjugation of suspension flows over SFT}
For an SFT $(\Sigma_A,\sigma),$ for $n\geq2$ we call $b=(i_0,\cdots,i_{n-1})$ an \emph{admissible word of length $n$} if $a_{i_ji_{j+1}}=1$ for $j=0,\cdots,n-1.$
Observe that $a_{ij}^{(n)}$ is the number of all admissible words of length $n$ beginning with $i$ and ending with $j,$ where $a_{ij}^{(n)}$ denotes the $(i,j)$-th entry of $A^n.$
Let $k_n=\sum_{0\leq i,j\leq k-1}a_{ij}^{(n)}.$ Denote $\Gamma_n$ the set of all admissible words of length $n,$ then $\# \Gamma_n=k_n.$

Given two topological dynamical systems $(X,T)$ and $(Y,S),$ $T$ is \emph{conjugate to} $S$ if there exists a homeomorphism $h: X\to Y$ such that $h\circ T=S\circ h$ where $h$ is called a \emph{conjugacy}.
The following Lemma states that an SFT  is always  conjugate to another which has more symbols.

\begin{lemma}\label{Lem:sequence of SFT}
Given an SFT $(\Sigma_A,\sigma).$  Then for each $n\geq2$ there exists a $k_n\times k_n$ $0$-$1$ matrix $A_n$ such that the SFT $(\Sigma_{A_n},\sigma)$  is conjugate to $(\Sigma_A,\sigma)$ by a conjugacy $g_n,$ where $k_n$ is defined as above. 	Moreover, the matrix $A_n$ is irreducible if and only if $A$ is.
\end{lemma}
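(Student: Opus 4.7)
The plan is to apply the standard $n$-block recoding (higher block presentation) from symbolic dynamics. I would take the alphabet of the new SFT to be $\Gamma_n$ itself, so it has exactly $k_n = \#\Gamma_n$ symbols, and define $A_n = [\alpha_{bc}]_{b,c\in\Gamma_n}$ by setting $\alpha_{bc}=1$ iff $b = (i_0,\dots,i_{n-1})$ and $c=(i_1,\dots,i_{n-1},i_n)$ for some $i_n$ with $a_{i_{n-1}i_n}=1$, and $\alpha_{bc}=0$ otherwise. The candidate conjugacy $g_n:\Sigma_A\to\Sigma_{A_n}$ sends $x=(x_i)_{i\in\mathbb{Z}}$ to the block sequence whose $i$-th entry is $b^{(i)}=(x_i,x_{i+1},\dots,x_{i+n-1})\in\Gamma_n$. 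The inverse map reads off the first letter of each block.

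Then several routine points need to be checked. First, $g_n(x)\in\Sigma_{A_n}$, since by construction $b^{(i+1)}$ is precisely the admissible one-step shift of $b^{(i)}$, so $\alpha_{b^{(i)}b^{(i+1)}}=1$. Second, $g_n$ and its inverse are mutually inverse continuous maps: agreement of two block sequences on a finite window forces agreement of the underlying symbol sequences on a (slightly larger) finite window, and conversely. Third, the conjugacy relation $g_n\circ\sigma=\sigma\circ g_n$ is immediate from the definitions. Putting these together yields the topological conjugacy claim.

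For the \emph{moreover} part, I would translate reachability in $A_n$ to reachability in $A$. Assuming $A$ is irreducible: given admissible $n$-words $b=(i_0,\dots,i_{n-1})$ and $c=(j_0,\dots,j_{n-1})$, use irreducibility of $A$ to choose an admissible $A$-path from $i_{n-1}$ to $j_0$, splice $b$, this path, and $c$ into one long admissible $A$-word (overlapping on the joining symbols), and read off the corresponding sequence of overlapping $n$-blocks; this produces a power of $A_n$ whose $(b,c)$-entry is positive. Conversely, assuming $A_n$ is irreducible: for any two symbols $i,j\in\{0,\dots,k-1\}$ (those not appearing in any admissible $n$-word can be discarded without changing $\Sigma_A$), choose admissible $n$-words $b$ starting with $i$ and $c$ ending with $j$, apply irreducibility of $A_n$ to obtain an admissible $A_n$-path from $b$ to $c$, and read off the underlying admissible $A$-path from $i$ to $j$.

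The main potential obstacle is purely bookkeeping: one has to keep straight the relation between the number of steps in $A_n$ and the length of the resulting word in $A$ (an $A_n$-path of length $m$ corresponds to an $A$-word of length $m+n$), and to verify that the splicing above actually produces an admissible word with the required block appearing at the correct position. No new dynamical input is needed, and the argument is entirely combinatorial.
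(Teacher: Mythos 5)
Your proof is correct and follows essentially the same route as the paper's: the standard higher block (or $n$-block) presentation of the SFT, with the matrix $A_n$ encoding permissible overlaps of consecutive $n$-words, the conjugacy reading off overlapping blocks, and irreducibility transferred in both directions by splicing paths through overlapping words. The only differences from the paper are cosmetic (you send $\Sigma_A\to\Sigma_{A_n}$ rather than the reverse, and you align the block at the left endpoint rather than centering it with offset $\lfloor (n-1)/2\rfloor$), and your explicit remark about discarding symbols not occurring in any admissible $n$-word is a small clarification that the paper glosses over.
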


\begin{proof}
Since $\#\Gamma_n=k_n,$ we write $\Gamma_n=\{b_0,b_1,\cdots,b_{k_n-1}\}$ where for each $i$ there is a unique admissible word $(i_0,\cdots,i_{n-1})$ corresponding to $b_i$.
Now we define a $k_n\times k_n$ $0$-$1$ matrix $A_n$ as follows: for any $0\leq i,j\leq k_n-1,$ assume the corresponding words $b_i$ and $b_j$ in $\Gamma_n$ are $b_i=(l_0,\cdots ,l_{n-1})$ and
$b_j=(m_0,\cdots, m_{n-1})$ respectively. Let
\begin{equation*}
a_{ij,n}=
\begin{cases}
1, &\text{if}\ l_{p+1}=m_p\ \text{for all}\ 0\leq p<n-1,\\
0, &\text{otherwise},
\end{cases}
\end{equation*}
where  $a_{ij,n}$ denotes the $(i,j)$-th entry of $A_n.$ Hence we obtain an SFT $(\Sigma_{A_n},\sigma).$

Now we explain how to construct the conjugacy $g_n$. Note that for each point $(\cdots,i_{-1},i_0,i_1,\cdots)$ in $\Sigma_A$, there corresponds a unique point $(\cdots,j_{-1},j_0,j_1,\cdots)$ in $\Sigma_{A_n}$ such that $$b_{j_l}=(i_{l-\lfloor\frac{n-1}{2}\rfloor},\cdots,i_{l},\cdots,i_{l+\lfloor\frac{n}{2}\rfloor})$$
where $\lfloor a\rfloor$ denotes the maximal integer not larger than $a$. Define $g_n:\Sigma_{A_n}\rightarrow \Sigma_A$ which maps $(\cdots,j_{-1},j_0,j_1,\cdots)$ to $(\cdots,i_{-1},i_0,i_1,\cdots)$. It is easy to see that $g_n$ is a homeomorphism such that $g_n\circ\sigma=\sigma\circ g_n$. Thus $(\Sigma_{A_n},\sigma)$ is conjugate to $(\Sigma_{A},\sigma)$ by $g_n$.

\vspace{2mm}
To show the equivalence of irreducibility between $A_n$ and $A$ we only need to notice the following basic fact:
{\it the matrix $A$ is irreducible if and only if for any $i,j\in\{0,1,\cdots,k-1\}$, there exists an admissible word $(a_0,a_1,\cdots,a_{t-1})$ of $(\Sigma_A,\sigma)$ with $t\geq 2$ satisfying $a_0=i$ and $a_{t-1}=j$. }

Now assume that $A$ is irreducible. Let $b_i=(l_0,\cdots ,l_{n-1})$ and $b_j=(m_0,\cdots, m_{n-1})$ be two words in $\Gamma_n$.
By the fact above, there exists an admissible word $(a_0,a_1,\cdots,a_{t-1})$ of $(\Sigma_A,\sigma)$ such that $a_0=l_{n-1}$ and $a_{t-1}=m_0,$ which implies that there exists an admissible word of length $t+n-1$ of $(\Sigma_{A_n},\sigma)$ beginning with $b_i$ and ending with $b_j.$ Hence $A_n$ is irreducible. Similarly we could obtain the irreducibility of $A_n$  from that of $A.$
\end{proof}


For $n\geq1$ and $t\in\mathbb Z,$ a {\it cylinder set} in the full $k$-shift $\Sigma_k$ is
$$[i_0,\cdots,i_{n-1}]_t=\{(x_i)_{i=-\infty}^{+\infty}: x_t=i_0, \cdots,x_{t+n-1}=i_{n-1}\}.$$
In particular, we denote by $[i]_0$ the set of elements of $\Sigma_k$ with $i$ being their $0$-th position.

The next lemma would play an important role in estimating the metric entropy of a suspension flow over an SFT in Proposition~\ref{Pro:intermediate entropy of suspension flow}.

\begin{lemma}\label{Lem:embedding of SFT}
	
Given a  $0$-$1$ irreducible matrix $A$ and a continuous roof function $\varphi:\Sigma_A\rightarrow \mathbb{R^+}.$ Let $\Sigma_{A_n}$ and $g_n$ be as in Lemma~\ref{Lem:sequence of SFT}. Then for any $\eta>0$ there exist $N\in\mathbb{N}$ and a strictly positive roof function $\varphi_{N}^{\prime}:\Sigma_{A_{N}}\rightarrow\mathbb{R^+}$ satisfying following properties:

\begin{enumerate}
\item $\varphi_{N}'$ is constant restricted on $[i]_0\cap\Sigma_{A_N}$ for any $i=0,\cdots,k_n-1,$
\item	for any $\mu\in\mathcal M_{inv}(\sigma,\Sigma_{A})$, one has
	$$\Big|\frac{h_{\mu_*}(\sigma,\Sigma_{A_N})}{\int\varphi_N' d\mu_*}-\frac{h_{\mu}(\sigma,\Sigma_{A})}{\int\varphi d\mu}\Big|<\eta,$$
where $\mu_*=({g^{-1}_{ _N}})_*(\mu).$
\end{enumerate}
	
Moreover, there exist a $0$-$1$ irreducible matrix $B$ and a constant roof function $\tau:\Sigma_B\rightarrow \mathbb{R}^+$ such that the suspension flow $(\Sigma_B^{\tau},\sigma_t^{\tau})$ is conjugate to $(\Sigma_{A_{N}}^{\varphi_{N}'},\sigma_t^{\varphi_N'})$.
	
	
\end{lemma}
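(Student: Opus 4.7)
The plan is to exploit the higher-block presentation $g_N:\Sigma_{A_N}\to\Sigma_A$ supplied by Lemma~\ref{Lem:sequence of SFT} in order to replace $\varphi$ by a function on $\Sigma_{A_N}$ that depends only on the $0$-th coordinate. Since $\Sigma_A$ is compact, $\varphi$ is uniformly continuous and bounded below by some $c>0$. The cylinder $[i]_0\cap\Sigma_{A_N}$ is mapped by $g_N$ onto a length-$N$ cylinder of $\Sigma_A$ (a set of points whose coordinates in a window of length $N$ around position $0$ are prescribed), so for any prescribed $\varepsilon>0$, choosing $N$ large enough forces the oscillation of $\varphi\circ g_N$ on each $[i]_0\cap\Sigma_{A_N}$ to be below $\varepsilon$.

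For such an $N$, pick in each cylinder a representative and round the value of $\varphi\circ g_N$ there to a positive rational, working with a common denominator so that all $k_N$ chosen numbers take the form $c_i=n_i\tau$ for a single $\tau>0$ and positive integers $n_i$; the rounding can clearly be arranged so that $|c_i-\varphi\circ g_N(x)|<2\varepsilon$ for every $x\in[i]_0\cap\Sigma_{A_N}$. Set $\varphi_N'(x)=c_{x_0}$; item~(1) is then immediate. For item~(2), given any $\mu\in\mathcal{M}_{inv}(\sigma,\Sigma_A)$ and $\mu_*=(g_N^{-1})_*\mu$, the fact that $g_N$ is a topological conjugacy yields $h_{\mu_*}(\sigma,\Sigma_{A_N})=h_\mu(\sigma,\Sigma_A)$, and the change-of-variables identity $\int\varphi\,d\mu=\int\varphi\circ g_N\,d\mu_*$ combined with the sup-norm estimate $\|\varphi_N'-\varphi\circ g_N\|_\infty<2\varepsilon$ gives
$$\left|\int\varphi_N'\,d\mu_*-\int\varphi\,d\mu\right|<2\varepsilon.$$
Since $\int\varphi\,d\mu\ge c$ for every invariant $\mu$ and $\int\varphi_N'\,d\mu_*\ge c/2$ once $\varepsilon$ is small, together with the uniform bound $h_\mu(\sigma,\Sigma_A)\le h_{\topo}(\sigma,\Sigma_A)<\infty$, a direct manipulation of the difference of the two ratios produces a bound of the form $C\varepsilon$ that is uniform in $\mu$; choosing $\varepsilon$ sufficiently small then yields $<\eta$.

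For the moreover statement, use a clock construction to convert $\varphi_N'$ to a constant roof function. Replace each symbol $i$ of $\Sigma_{A_N}$ by the $n_i$ new symbols $(i,0),(i,1),\ldots,(i,n_i-1)$, and define the $0$-$1$ matrix $B$ by forcing the transitions $(i,j)\to(i,j+1)$ for $0\le j<n_i-1$ while allowing $(i,n_i-1)\to(i',0)$ exactly when $A_N$ permits $i\to i'$. Then $B$ is irreducible: to connect $(i,j)$ to $(i',j')$ one runs the clock of $i$ forward, invokes an $A_N$-path $i\to i_1\to\cdots\to i'$ (traversing each intermediate clock completely), and runs the clock of $i'$ up to $j'$. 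The identification $\Sigma_B^\tau\to\Sigma_{A_N}^{\varphi_N'}$ that glues, for each orbit, the $n_i$ consecutive clock sub-intervals of length $\tau$ into the single fibre of height $n_i\tau=c_i=\varphi_N'$ is then a conjugacy of suspension flows.

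The main obstacle is making the estimate in item~(2) uniform over all $\mu\in\mathcal{M}_{inv}(\sigma,\Sigma_A)$ simultaneously; this is what forces one to rely on the global quantitative facts $\inf\varphi>0$ and $h_{\topo}(\sigma,\Sigma_A)<\infty$ rather than on measure-theoretic estimates that would only see one $\mu$ at a time. A secondary technical point is reconciling sup-norm closeness of $\varphi_N'$ to $\varphi\circ g_N$ with the commensurability of the values $c_i$ needed for the moreover part, which is solved by first rounding to a sufficiently fine common-denominator rational grid before applying the clock construction.
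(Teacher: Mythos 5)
Your proposal is correct and matches the paper's argument essentially step for step: you pass to the higher-block presentation to shrink the oscillation of $\varphi\circ g_N$ on zeroth-coordinate cylinders, round the cylinder values onto a commensurable grid $\{n_i\tau\}$, bound the difference of the two Abramov ratios uniformly over $\mu$ using $\inf\varphi>0$ and the finiteness of the topological entropy, and finish with the clock (tower) construction to obtain $(\Sigma_B,\sigma)$ with a constant roof. The only cosmetic difference is that the paper rounds one-sidedly (requiring $\varphi_N'\geq\varphi_N$) so the denominator bound is simply $a^2$, whereas you round two-sidedly and absorb this into the constant by taking $\int\varphi_N'\,d\mu_*\geq c/2$ for $\varepsilon$ small; both work.
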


\begin{proof}
Define $\varphi_n:\Sigma_{A_n}\to\mathbb R^+$ as $\varphi_n=\varphi\circ g_n.$ Then $\varphi_n$ is continuous since $\varphi$ is. We have the following fact.
	
\begin{claim*}\label{Claim:variation of varphi_n}
$\max\limits_{i\in\{0,\cdots,k_n-1\}}\sup_{x,y\in [i]_0\cap\Sigma_{A_n}} |\varphi_n(x)-\varphi_n(y)|\rightarrow 0$ as $n\rightarrow+\infty.$
\end{claim*}
\begin{proof}
 For any $n\geq2$ and each $i\in\{0,\cdots,k_n-1\}$ assume that the corresponding admissible word $b_{i}\in\Gamma_n$ is $[l_{0},\cdots,l_{n-1}].$  Hence
$$g_n([i]_0)=[l_{0},\cdots,l_{n-1}]_{-\lfloor\frac{n-1}{2}\rfloor}.$$
Thus $$\sup_{x,y\in [i]_0\cap\Sigma_{A_n}} |\varphi_n(x)-\varphi_n(y)|=\sup_{x,y\in[l_{0},\cdots,l_{n-1}]_{-\lfloor\frac{n-1}{2}\rfloor}\cap\Sigma_{A}} |\varphi(x)-\varphi(y)|\to0,\ n\to+\infty$$
by the continuity of $\varphi.$
\end{proof}

Now fix a small constant $\eta>0,$ the following claim gives $N$ and $\varphi_N'.$
\begin{claim*}\label{Claim:staircase function}
For the constant $\eta>0$, there exist a positive integer $N$, a function $\varphi_{N}':\Sigma_{A_{N}}\rightarrow \mathbb{R^+}$ and a constant $\tau>0$ satisfying the following properties.
\begin{enumerate}
\item $\varphi_{N}'\geq \varphi_{N};$
\item For each $i\in\{0,1,\cdots,k_N-1\}$, there exists $l_i\in\mathbb Z^+$ such that
    $\varphi_{N}'|_{[i]_0\cap\Sigma_{A_{N}}}=l_i\tau;$
\item For any $\mu\in\mathcal M_{inv}(\sigma,\Sigma_{A}),$ one has
$$\Big|\frac{h_{\mu_*}(\sigma,\Sigma_{A_{N}})}{\int\varphi_{N}' d\mu_*}-\frac{h_{\mu}(\sigma,\Sigma_A)}{\int\varphi_{N} d\mu}\Big|<\eta,$$ where $\mu_*=({g^{-1}_{ _N}})_*(\mu).$
\end{enumerate}
\end{claim*}
	
\begin{proof}

Since $A$ is irreducible, we have that $h_{\topo}(\sigma,\Sigma_A)>0$ by Theorem~\ref{Thm:Parry measaure}.
By conjugacy of $g_n,$ let $\min\varphi=\min\varphi_n=a>0.$ Take $\delta=\dfrac{\eta a^2}{h_{\topo}(\sigma,\Sigma_A)}.$
		
		By the claim above, we could take $N$ large such that for any $i\in\{0,1,\cdots,k_{N}-1\}$,
		$$\max_{x,y\in [i]_0\cap\Sigma_{A_N}} |\varphi_N(x)-\varphi_N(y)|<\frac{\delta}{4}.$$
Now we take a function $\varphi_{N}'$ satisfying the following two properties:
		\begin{itemize}
			\item[--]  for any $i\in\{0,1,\cdots,k_{N}-1\}$, $\varphi_{N}'$ is constant on $[i]_0\cap\Sigma_{A_{N}}$ such that
			$$\max_{x\in [i]_0\cap\Sigma_{A_{N}}}\varphi_{N}(x)\leq\varphi_{N}'|_{[i]_0\cap\Sigma_{A_{N}}}
			\leq\max_{x\in [i]_0\cap\Sigma_{A_{N}}}\varphi_{N}(x)+\frac{\delta}{4}.$$
			\item[--] the collections of numbers $\{\varphi_{N}'|_{[i]_0}\cap\Sigma_{A_{N}}\}_{i=0}^{k_{N}-1}$ are rationally related. To be precise, there exists a constant $\tau>0$ and  positive integers $\{l_i\}_{i=0}^{k_{N}-1}$ such that $\varphi_{N}'|_{{[i]_0}\cap\Sigma_{A_{N}}}=l_i\tau.$
		\end{itemize}
Obviously, $|\varphi_{N}'(x)-\varphi_{N}(x)|\leq\dfrac{\delta}{2}, \forall x\in\Sigma_{A_N}.$
		
Now for any $\mu\in\mathcal M_{inv}(\Sigma_{A},\sigma),$ notice that $\dfrac{h_{\mu}(\sigma,\Sigma_A)}{\int\varphi d\mu}=\dfrac{h_{\mu_*}(\sigma,\Sigma_{A_{N}})}{\int\varphi_{N} d\mu_*}$, then we have
\begin{align*}
		\Big|\frac{h_{\mu_*}(\sigma,\Sigma_{A_{N}})}{\int\varphi_{N}' d\mu_*}-\frac{h_{\mu}(\sigma,\Sigma_A)}{\int\varphi d\mu}\Big|
				&=\Big|\frac{(\int\varphi_{N}' d\mu_*-\int\varphi_{N} d\mu_*)h_{\mu_*}(\sigma,\Sigma_{A_{N}})}{\int\varphi_{N}' d\mu_*\int\varphi_{N} d\mu_*}\Big|
		\\
		&\leq \frac{\max_{x\in\Sigma_{A_{N}}}(\varphi_{N}'(x)-\varphi_{N}(x)) h_{\topo}(\sigma,\Sigma_{A_{N}})}{a^2}
		\\
&\leq\dfrac{\delta}{2}\cdot\dfrac{h_{\topo}(\sigma,\Sigma_A)}{a^2}<\eta.
		\end{align*}
	\end{proof}
	
To complete the proof of Lemma \ref{Lem:embedding of SFT} we now explain how to construct  the $0$-$1$ irreducible matrix $B.$  Recall that $\varphi_{N}'|_{[i]_0\cap\Sigma_{A_{N}}}=l_i\tau$ for any $i\in\{0,1,\cdots,k_N-1\}.$ Let $L=\sum_{i=0}^{k_N-1} l_i.$	
Denote $$\Gamma=\{0_0,\cdots,0_{l_0-1},\cdots i_0,\cdots,i_{l_i-1},\cdots,{(k_N-1)}_0,\cdots,{(k_N-1)}_{l_{k_{ _N}-1}-1}\}$$ and correspond them to $\{0,1,2,\cdots,L-1\}$.
	Let $B=[b_{ij}]$ be the $L\times L$ $0$-$1$ matrix such that
	\begin{itemize}
		\item[-]$b_{ij}=1,$ either if $i$ corresponds to $m_{\alpha}$ and $j$ corresponds to $m_{\alpha+1}$ for some $0\leq m\leq k_N-1$ and $0\leq \alpha\leq l_m-2$, or if $i$ corresponds to $m_{l_m-1}$ and $j$ corresponds to $n_0$ such that $a_{mn,N}=1$ where $a_{mn,N}$ is the $(m,n)$-th entry of $A_N;$
		\item[-] $b_{ij}=0,$ otherwise.
	\end{itemize}
Obviously,  $B$ is irreducible since $A_N$ is.
\vspace{2mm}
	
Now consider  the suspension flow $(\Sigma_B^\tau,\sigma_t^\tau)$ over $(\Sigma_B,\sigma)$  where $\tau:\Sigma_B\rightarrow \mathbb{R}^+$ is the constant function. Define a map $g:\Sigma_B^\tau\rightarrow \Sigma_{A_{N}}^{\varphi_{N}'}$ as follows: given $x\in\Sigma_B$ with the following form	
$$(\cdots,i_{-1}^{(0)},i_{-1}^{(1)},\cdots,i_{-1}^{(l_{i_{-1}}-1)},i_0^{(0)},i_0^{(1)},\cdots,i_0^{(l_{i_0}-1)},i_1^{(0)},i_1^{(1)},\cdots,i_1^{(l_{i_1}-1)},\cdots),$$
where $(\cdots,i_{-1},i_{0},i_{1},\cdots)$ is a point in $\Sigma_{A_N},$ assume the $0$-position of $x$ is $i_0^{(\alpha)}$, then $g$ maps $(x,t)\in \Sigma_B^\tau$ to $(x',t')\in \Sigma_{A_{N}}^{\varphi_{N}'}$
with $x'=(\cdots,i_{-1},i_{0},i_{1},\cdots)$ and $t'=\alpha\tau+t.$ Then $g$ is a homeomorphism and $g\circ \sigma_t^\tau=\sigma_t^{\varphi_N'}\circ g$. Thus $(\Sigma_B^\tau,\sigma_t^\tau)$ is conjugate to $(\Sigma_{A_{N}}^{\varphi_{N}'},\sigma_t^{\varphi_N'})$ by $g$. 
\end{proof}

\bigskip

\subsection{Proof of Proposition~\ref{Pro:intermediate entropy of suspension flow}}\label{Section:proof of proposition 1.5}
Now we are prepared to prove Proposition~\ref{Pro:intermediate entropy of suspension flow}. Firstly we briefly recall some basic facts about suspension flows. For more details one may refer to~\cite[Chapter 6]{parry-pollicott}.
Let $(\Sigma_A^\varphi,\sigma_t^\varphi)$ be a suspension flow over an SFT $(\Sigma_A,\sigma).$
There is a $1$-$1$ correspondence between $\mathcal M_{inv}(\sigma,\Sigma_A)$ and $\mathcal M_{inv}(\sigma_t^\varphi,\Sigma_A^\varphi):$
for any $\mu\in\mathcal M_{inv}(\sigma,\Sigma_A),$  there is a standard way lifting $\mu$ to an invariant measure $\tilde{\mu}$ of  $(\Sigma_A^\varphi,\sigma_t^\varphi),$ and every invariant measure of $(\Sigma_A^\varphi,\sigma_t^\varphi)$ can be obtained in this way from an invariant measure of $(\Sigma_A,\sigma).$
Moreover, $\tilde{\mu}$ is ergodic if and only if $\mu$ is.  In \cite{abramov},  Abramov revealed the relation between $h_{\tilde{\mu}}(\sigma_t^\varphi,\Sigma_A^\varphi)$ and $h_\mu(\sigma,\Sigma_A)$ through the following formula:
\begin{equation}\label{equ:entropy}
h_{\tilde{\mu}}(\sigma_t^\varphi,\Sigma_A^\varphi)=\dfrac{h_\mu(\sigma,\Sigma_A)}{\displaystyle\int \varphi d\mu}.
\end{equation}

In the following, for an invariant measure $\mu$ of an SFT we denote by $\tilde{\mu}$ the corresponding invariant measure of the suspension flow.

\begin{proof}[Proof of Proposition~\ref{Pro:intermediate entropy of suspension flow}]
Let $(\Sigma_A^\varphi,\sigma_t^\varphi)$ be a suspension flow over $(\Sigma_A,\sigma)$ with $h_{\topo}(\sigma_t^\varphi,\Sigma_A^\varphi)>0.$
Given $h\in (0,h_{\topo}(\sigma_t^\varphi,\Sigma_A^\varphi)),$ through the conjugacy $\{g_n\}_{n\geq0}$ in Lemma~\ref{Lem:sequence of SFT} we only need to find an ergodic measure $\tilde{\mu}$ of $(\Sigma_{A_n}^{\varphi_n},\sigma_t^{\varphi_n})$ satisfying $h_{\tilde{\mu}}(\sigma_t^{\varphi_n},\Sigma_{A_n}^{\varphi_n})=h$ for some $n\in\mathbb{N},$ where $\varphi_n=\varphi\circ g_n.$


Let $\eta=\min\Big\{\dfrac{h_{\topo}(\sigma_t^\varphi,\Sigma_A^\varphi)-h}{4},\dfrac{h}{4}\Big\}.$ By Lemma~\ref{Lem:embedding of SFT} and formula~(\ref{equ:entropy}), there exist $N\in\mathbb N$ and  $\varphi_{N}':\Sigma_{A_{N}}\rightarrow\mathbb{R}^+$ such that for any
$\mu\in\mathcal{M}_{inv}(\sigma,\Sigma_{A}),$ we have
\begin{equation}\label{estimation}
\Big|h_{\tilde\mu}(\sigma_t^{\varphi_N},\Sigma_{A_N}^{\varphi_N})-h_{\tilde\mu'}(\sigma_t^{\varphi_N},\Sigma_{A_N'}^{\varphi_N'})\Big|<\eta.
\end{equation}
where $\tilde{\mu}\in\mathcal{M}_{inv}(\sigma_t^{\varphi_N},\Sigma_{A_N}^{\varphi_N})$ and $\tilde{\mu}'\in\mathcal{M}_{inv}(\sigma_t^{\varphi_N'},\Sigma_{A_N}^{\varphi_N})$ are the lifting measures of $\mu$ of the two suspension flows respectively.

By the Variational Principle, we have
\begin{equation}\label{variational principle}
h_{\topo}(\sigma_t^{\varphi_N'},\Sigma_{A_N'}^{\varphi_N'})\geq h_{\topo}(\sigma_t^{\varphi_N},\Sigma_{A_N}^{\varphi_N})-\eta\geq  h+3\eta.
\end{equation}



Let $B$ and $\tau$ be the irreducible matrix and the corresponding constant function $\tau:\Sigma_B\rightarrow \mathbb{R}^+$ obtained from Lemma \ref{Lem:embedding of SFT} such that $(\Sigma_B^{\tau},\sigma_t^{\tau})$ is conjugate to $(\Sigma_{A_{N}}^{\varphi_{N}'},\sigma_t^{\varphi_N'})$ through a conjugacy $g.$
Note that by (\ref{equ:entropy}) we have $h_{\tilde\nu}(\sigma_t^\tau,\Sigma_B^\tau)=\tau h_{\nu}(\sigma,\Sigma_B),$ $\forall \nu\in\mathcal M_{inv}(\sigma,\Sigma_B).$ Thus we could lift $(\nu(s))_{s\in[0,1]}$ obtained from Lemma \ref{Lem:intermediate entropy for SFT} by applying to the SFT $(\Sigma_B,\sigma)$ to a continuous map $\tilde{\nu}(\cdot):[0,1]\rightarrow \mathcal M_{inv}(\sigma_t^\tau,\Sigma_B^\tau)$ such that
\begin{itemize}
\item[(i)] {\it ergodicity}: $\tilde{\nu}(s)$ is ergodic, $\forall s\in(0,1];$
\item[(ii)] {\it minimality and maximality}: $h_{\tilde{\nu}(0)}(\sigma_t^\tau,\Sigma_B^\tau)=0,\ \ h_{\tilde{\nu}(1)}(\sigma_t^{\tau},\Sigma_B^{\tau})=h_{\topo}(\sigma_t^{\tau},\Sigma_B^{\tau});$
\item[(iii)] {\it entropy continuity}: the map $$h:[0,1]\rightarrow [0,h_{\topo}(\sigma_t^\tau,\Sigma_B^\tau)]$$
  $$s\mapsto h_{\tilde{\nu}(s)}(\sigma_t^\tau,\Sigma_B^\tau)$$
  is continuous.
\end{itemize}

Let $\tilde{\mu}'(s)=g_*({\tilde{\nu}(s)}), s\in[0,1].$
Then the continuous map $\tilde{\mu}^\prime(\cdot):[0,1]\rightarrow \mathcal M_{inv}(\sigma_t^{\varphi_N'},\Sigma_{A_{N}}^{\varphi_{N}'})$ satisfies (i) (ii) (iii) as above
for the suspension flow $(\Sigma_{A_{N}}^{\varphi_{N}'},\sigma_t^{\varphi_N'}).$
\medskip

Now consider $\tilde{\mu}(s)\in\mathcal{M}_{inv}(\Sigma_{A_{N}}^{\varphi_{N}}, \sigma_t^{\varphi_N})$ corresponding to $\tilde{\mu}'(s),$ where by corresponding we mean that they are lifting measures of the same $\mu(s)\in\mathcal M_{inv}(\sigma,\Sigma_A).$
Obviously, the map $\tilde{\mu}(\cdot):[0,1]\rightarrow \mathcal M_{inv}(\sigma_t^{\varphi_N},\Sigma_{A_{N}}^{\varphi_{N}})$ satisfies the properties (i) {\it ergodicity} and  (iii) {\it entropy continuity} as above.
Moreover, by (\ref{estimation}) and (\ref{variational principle}) we have the following estimations:
\begin{equation}\label{equa:lower}
0\leq  h_{\tilde{\mu}(0)}(\sigma_t^{\varphi_N},\Sigma_{A_N}^{\varphi_N})\leq  h_{\tilde{\mu}'(0)}(\sigma_t^{\varphi_N'},\Sigma_{A_N}^{\varphi_N'})+\eta=\eta<h;
\end{equation}
\begin{equation}\label{equa:upper}
h_{\tilde{\mu}(1)}(\sigma_t^{\varphi_N},\Sigma_{A_N}^{\varphi_N})\geq  h_{\tilde{\mu}'(1)}(\sigma_t^{\varphi_N'},\Sigma_{A_N}^{\varphi_N'})-\eta= h_{\topo}(\sigma_t^{\varphi_N'},\Sigma_{A_N}^{\varphi_N'})-\eta\geq    h+2\eta.
\end{equation}
Combine (\ref{equa:lower}) and (\ref{equa:upper}) with the property of {\it entropy continuity} of $\tilde{\mu}(\cdot),$ we have that there exists $s_0\in(0,1)$ such that
$$h_{\tilde{\mu}(s_0)}(\sigma_t^{\varphi_N},\Sigma_{A_N}^{\varphi_N})= h.$$
Moreover, we have $\tilde{\mu}(s_0)\in\mathcal{M}_{inv}(\sigma_t^{\varphi_N},\Sigma_{A_N}^{\varphi_N})$ is ergodic by the {\it ergodicity} of $\tilde{\mu}(\cdot).$

Let $\tilde{\mu}=\tilde{\mu}(s_0)$ and we complete the proof of Proposition~\ref{Pro:intermediate entropy of suspension flow}.
\end{proof}

\flushleft{\bf Ming Li} \\
School of Mathematical Sciences and LPMC, Nankai University, Tianjin, 300071, P. R. China\\
\textit{E-mail:} \texttt{limingmath@nankai.edu.cn}\\

\flushleft{\bf Yi Shi} \\
School of Mathematical Sciences, Peking University, Beijing, 100871, P. R. China\\
\textit{E-mail:} \texttt{shiyi@math.pku.edu.cn}\\

\flushleft{\bf Shirou Wang} \\
Department of Mathematical and Statistical Sciences, University of Alberta, Edmonton T6G2G1, Alberta, Canada\\
\textit{E-mail:} \texttt{shirou@ualberta.ca}\\

\flushleft{\bf Xiaodong Wang} \\
School of Mathematical Sciences, Shanghai Jiao Tong University, Shanghai, 200240, P. R. China\\
\textit{E-mail:} \texttt{xdwang1987@sjtu.edu.cn}\\

\end{document}